%% file: thomnegative_-_arXiv.tex
\documentclass[12pt,A4,oneside,english]{amsart}
\usepackage[hmargin=2cm,bottom=3cm,textwidth=17.3cm]{geometry}
\usepackage{amsthm}
\usepackage{amsmath}
\usepackage[hidelinks]{hyperref}
\usepackage{bbm}
\usepackage{booktabs}
\usepackage{array}
\usepackage{tikz}
\usepackage{cancel}
\usepackage{xcolor}
\usepackage{graphicx} 
\usetikzlibrary{matrix,calc}
\usepackage{booktabs,longtable,array}

\newcommand{\de}{\delta}
\newcommand{\qu}{\sigma_q}

\newcommand{\localheading}[1]{%
	\par\bigskip
	\noindent{\bfseries #1}\par
	\smallskip
}

\newcommand{\cubpic}[1]{%
	\begin{tikzpicture}[scale=0.42,baseline=-0.55ex,line cap=round,line join=round]
		#1
	\end{tikzpicture}%
}

\newcommand{\diagNodal}{\cubpic{
		\draw[thick,domain=-1.45:1.45,samples=180,smooth,variable=\t]
		plot ({0.60*(\t*\t - 1)}, {0.42*(\t*\t*\t - \t)});
}}

\newcommand{\diagNu}{\cubpic{
		\draw[thick,domain=-1.45:1.45,samples=150,smooth,variable=\t]
		plot ({0.35*\t*\t*\t},{0.75*\t*\t} );
}}

\newcommand{\diagTheta}{\cubpic{
		\draw[thick] (0,0) ellipse [x radius=0.65,y radius=0.95];
		\draw[thick] (-1,-0.25) -- (1,0.25);
}}

\newcommand{\diagOmega}{\cubpic{
		\draw[thick] (0,0) ellipse [x radius=0.95,y radius=0.65];
		\draw[thick] (-1.25,-0.65) -- (1.25,-0.65);
}}

\newcommand{\diagA}{\cubpic{
		\draw[thick] (-1.15,-0.45) -- (1.15,-0.45);
		\draw[thick] (-0.75,1.05) -- (0.95,-1.05);
		\draw[thick] (0.75,1.05) -- (-0.95,-1.05);
}}

\newcommand{\diagZh}{\cubpic{
		\draw[thick] (-1.25,-0.85)--(1.25,0.85);
		\draw[thick] (-1.30,0)--(1.30,0);
		\draw[thick] (-0.80,1.10)--(0.80,-1.10);
}}

\newcommand{\diagSmooth}{\cubpic{
		\draw[thick,domain=-1.25:1.25,samples=120,smooth,variable=\t]
		plot ({\t}, {0.45*(\t*\t*\t - \t)});
}}

\newcommand{\Zh}{%
	\mathord{%
		\begin{tikzpicture}[baseline=-0.55ex,scale=0.13,line cap=round,line join=round]
			\draw[line width=0.55pt] (0,-1) -- (0,1);
			\draw[line width=0.55pt] (-1,-1) -- (0,0) -- (-1,1);
			\draw[line width=0.55pt] (1,-1) -- (0,0) -- (1,1);
		\end{tikzpicture}%
	}%
}

\usepackage{tabularx} 
\newcolumntype{Y}{>{\raggedright\arraybackslash}X} 
\newcommand{\LTPcell}[1]{%
	 \(\scriptsize\begin{aligned}[t]#1\end{aligned}\)%
}

\newtheorem*{theorem*}{Theorem}
\newtheorem*{lemma*}{Lemma}
\newtheorem*{definition*}{Definition}
\numberwithin{equation}{section}

\input{abbrev.tex}

\allowdisplaybreaks[2]

\makeatletter
\renewcommand*\l@subsection{\@tocline{2}{0pt}{2.5pc}{5pc}{}}
\makeatother

\address{\'Akos K.\ Matszangosz, HUN-REN Alfr\'ed R\'enyi Institute of Mathematics, Re\'altanoda utca 13-15, 1053 Budapest, Hungary}
\email{matszangosz.akos@gmail.com}

\address{L\'aszl\'o M. Feh\'er, 
	Department of Analysis, Institute of Mathematics, E\"otv\"os Lor\'and University,
	Pázmány Péter sétány 1/C, 1117 Budapest, Hungary and  HUN-REN Alfr\'ed R\'enyi Institute of Mathematics, Re\'altanoda utca 13-15, 1053 Budapest, Hungary}
\email{lfeher63@gmail.com}

\thanks{\'A. K. M. is supported by the Hungarian National Research, Development and Innovation Office, NKFIH PD 145995.}
\thanks{L. M. F. enjoyed the hospitality of the R\'enyi Institute for a year while working on this paper}
\title[Obstructions for Morin and fold maps]{Obstructions for Morin and fold maps: Stiefel-Whitney classes and Euler characteristics of singularity loci}
\author{L\'aszl\'o M. Feh\'er, \'Akos K. Matszangosz}
\subjclass[2020]{32S20, 57R45, 58K30, 05E05, 55N91}
\keywords{Thom polynomial, Legendre Thom polynomial, contact singularities, function singularities, equivariant cohomology, Schur polynomials, Q-polynomials.}

\newtheorem*{rremark*}{Remark}
\newenvironment*{remark*}{\begin{rremark*}\small \rm}{\end{rremark*}}

\usepackage{accents}

\usepackage{tikz-cd}
\usetikzlibrary{decorations.pathreplacing}

\newcommand{\gbinom}[2]{ {{#1}\brace{#2}} }
\begin{document}
  \title{Thom series in negative relative codimension}
\maketitle
\begin{abstract} We develop a theory of Thom series for contact function singularities. Quadratic stabilization $\si_q$ of a contact singularity $\eta\in J^k(n,1)$ gives contact singularities $\si_q^{i}\eta\in J^k(n+i,1)$.  We study the stable Thom polynomials (i.e.\ the Thom polynomial in quotient variables) of $\si_q^{i}\eta$ as $i$ increases. To this end, we define the \emph{Thom series of a contact function singularity} $\eta$, which for any given $i$ is a linear combination of Schur polynomials $s_\la$. For large enough $i$ the value of the Thom series at $1-(n+i)$ is the stable Thom polynomial of $\si_q^i\eta$ and for small $i$ it determines its coefficients outside the kernel of a specialization map.
	
	We prove that the Thom series has two main properties: 1) as $i$ increases, the partitions $\la$ follow a simple stabilization pattern, and there is a finite set of $\la$ which generates the support of the entire Thom series via this stabilization; 2) the coefficients of the $s_\la$ are polynomials in $i$ with explicit degree bounds.
	
	We describe the precise relationship between unstable and stable Thom polynomials of contact function singularities and Legendre Thom polynomials and we carry out computations of all three, as follows. We compute the complete family of stable Thom polynomials for function singularities with $\ga\leq 6$. We compute unstable and Legendre Thom polynomials for several families of binary and ternary singularities. We define a class of multi-binary singularities, and compute their Thom polynomials. We discuss second order Thom-Boardman classes, and indicate difficulties that arise beyond function singularities.
	
	The results of the paper will be used in a companion paper \cite{FeherMatszangoszupcoming}, where Thom polynomials will be applied to problems in enumerative geometry.
\end{abstract}
\tableofcontents
\section{Introduction}
\addtocontents{toc}{\protect\setcounter{tocdepth}{1}}
\subsection*{General theory of Thom polynomials}
Thom \cite{Thom1955} noticed that given a generic smooth map $f:N^n\to P^p$ and a contact singularity type $\eta\subset J^k(n,p)$, where $J^k(n,p)$ denotes the space of $k$-jets for a large $k$, there exists a universal 
\emph{Thom polynomial}
\[[\eta]\in \F_2[a_1\stb a_n,b_1\stb b_p]\] 
which computes the fundamental cohomology class of the singularity locus $\eta(f)$:
\[
[\eta(f)\subset N]=[\eta](w_1(N)\stb w_n(N),f^*w_1(P)\stb f^*w_p(P))
\]
where $w_i(N)$ and $w_i(P)$ are the Stiefel-Whitney classes of the tangent bundles of $N$ and $P$.  The analogous Thom polynomials exist in the complex category, with Chern classes replacing Stiefel-Whitney classes. In this paper we only work in the complex category. Complex Thom polynomials can be used to solve a wide array of enumerative problems. The results of the paper will be used in a companion paper \cite{FeherMatszangoszupcoming}, where Thom polynomials will be applied to problems in enumerative geometry.

The first structure theorem about Thom polynomials of contact singularities is the Damon theorem \cite{damon1972thom}, which we briefly recall.  Given a contact singularity $\eta\subset J^k(n,p)$ there is an infinite sequence of singularities $\sigma^i(\eta)\subset J^k(n+i,p+i)$ obtained by \emph{trivial unfolding}, where the trivial unfolding $\sigma(g)\in J(n+1,p+1)$  of $g\in J^k(n,p)$ is given by 
\[\sigma(g)(x_1,\dots,x_{n+1})=(g_1,\dots,g_p,x_{n+1}).\]

Damon's theorem states that the Thom polynomials of the iterated trivial unfoldings of a contact singularity $\eta\subset J^k(n,p)$ are the same in the following sense: 

\begin{theorem*}[Damon] For any $k$-determined contact singularity $\eta \subset J^k(n,p)$ there is a unique  polynomial  $\Tp(\eta)\in \Z[c_1,\dots,c_i,\dots]$ that we will call the \emph{stable Thom polynomial of $\eta$}, such that
	\[ [\sigma^i(\eta)\subset J^k(n+i,p+i)]=\rho^{n+i\to p+i}(\Tp(\eta)),\]
where define the ring homomorphism
\[\rho^{u\to v}:   \Z[c_1,\dots,c_i,\dots]\to \Z[a_1,\dots,a_u,b_1,\dots,b_v]\]
by the formal power series
\[1+c_1+c_2+\cdots=\frac{1+b_1+b_2+\cdots+b_v}{1+a_1+a_2+\cdots+a_u}.\]
\end{theorem*}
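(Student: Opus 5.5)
We prove Damon's theorem by working equivariantly. The symmetry group of $J^k(n,p)$ is $G_{n,p}=\mathrm{Diff}^k(\C^n,0)\times\mathrm{Diff}^k(\C^p,0)$, which retracts to $GL_n\times GL_p$. Hence $[\eta]\in H^*_{GL_n\times GL_p}(J^k(n,p))=\Z[a_1,\dots,a_n,b_1,\dots,b_p]^{\mathrm{sym}}$, i.e.\ a polynomial in the Chern roots $\alpha_1,\dots,\alpha_n$ of the source and $\beta_1,\dots,\beta_p$ of the target. The argument has two independent parts:
\begin{enumerate}
\item[(i)] $[\eta]$ depends only on the quotient variables $c_i$, i.e.\ it lies in the image of $\rho^{n\to p}$;
\item[(ii)] the polynomial in the $c_i$ does not change when $\eta$ is replaced by $\sigma(\eta)$.
\end{enumerate}
Uniqueness of $\Tp(\eta)$ follows once we note that $\rho^{u\to v}$ is injective in degrees $\le$ some bound growing with $u,v$. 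Indeed, $c_1,\dots,c_d$ are algebraically independent in $\Z[a,b]^{\mathrm{sym}}$ once $v\ge d$. So for large $i$ the class of $\sigma^i(\eta)$ determines $\Tp(\eta)$, and compatibility then forces it for every $i$.

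For (ii) we use restriction. Consider the $GL_n\times GL_p$-equivariant linear embedding
\[
j:J^k(n,p)\to J^k(n+1,p+1),\qquad g\mapsto \sigma(g),
\]
equivariant along the inclusion $GL_n\times GL_p\times GL_1\subset GL_{n+1}\times GL_{p+1}$, with the $GL_1$ acting diagonally so that $\alpha_{n+1}=\beta_{p+1}=t$. The key geometric fact is that $\sigma(\eta)$ is transversal to the image of $j$ and that $j^{-1}(\sigma(\eta))=\eta$. This holds because contact equivalence classes are determined by the local algebra: $\sigma(g)$ has the same local algebra as $g$, and the contact orbit of $\sigma(g)$ meets the slice $j(J^k(n,p))$ transversally, since the tangent space to the orbit contains the directions $\partial/\partial x_{n+1}$ and the new target component. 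Transversality gives
\[
[\sigma(\eta)]\big|_{\alpha_{n+1}=\beta_{p+1}=t}=[\eta].
\]
Setting $\alpha_{n+1}=\beta_{p+1}$ leaves the quotient series $\prod(1+\beta)/\prod(1+\alpha)$ unchanged. So if both classes are expressed in the $c$'s, the restriction identifies them, using injectivity of the $c$'s in the relevant degree range after possibly stabilizing further.

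For (i), the cleanest route is again a diagonal restriction. Let the torus act with an extra weight identification $\alpha_n=\beta_p$ on the whole space $J^k(n,p)$, rather than on a slice. Concretely, we show that $[\eta]$ is divisible-compatible: it is invariant under the substitution making one source root equal to one target root in the appropriate sense. A more standard approach uses the fact that $[\eta]$ is the pushforward from the classifying space of the stabilizer $G_\eta$ of a germ. By Mather's theory the maximal compact subgroup of the stabilizer acts on source and target through representations whose difference is stable, so the Euler-class localization expression depends only on the virtual bundle $T_{\mathrm{target}}-T_{\mathrm{source}}$. We prefer Damon's original argument. For a $k$-determined germ, $\eta$ is an invariant subvariety whose equivariant class is the Thom polynomial of the "bundle" map between the trivial source and target bundles. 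By naturality under adding a trivial line bundle to both sides, which is (ii) applied to the map $N\times\C\to P\times\C$, the class factors through $K$-theory classes of $TN-f^*TP$. In the stable range the only natural characteristic classes of this virtual bundle are polynomials in its Chern classes $c_i$.

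The main obstacle will be step (i): making rigorous that a class invariant under all stabilizations is a function of the virtual normal bundle only. Our first attempt will be to deduce it from (ii) together with the universal property of the classifying space $BGL$ of virtual bundles of rank $p-n$. A class compatible with all the restrictions $\alpha_{n+i}=\beta_{p+i}$ defines an element of $\varprojlim H^*(BGL_{n+i}\times BGL_{p+i})$ that is invariant under the "diagonal" maps, and such invariants are exactly $H^*(BU)=\Z[c_1,c_2,\dots]$ via the difference map. The remaining care is in the degree bookkeeping that makes the finite-level statement follow from the limit.
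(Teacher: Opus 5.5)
The paper does not prove this statement; it only quotes it from Damon. So your proposal has to be judged on its own. The parts you actually carry out are fine: the uniqueness argument, and the restriction identity
\[ [\sigma\eta](\alpha_1,\dots,\alpha_n,t;\beta_1,\dots,\beta_p,t)=[\eta](\alpha_1,\dots,\alpha_n;\beta_1,\dots,\beta_p). \]
One slip: $\mathrm{Diff}(\C^n,0)\times\mathrm{Diff}(\C^p,0)$ is the right--left group $\mathcal{A}$, not the contact group $\mathcal{K}$. The transversality you need comes from the contact tangent space. For $F=\sigma(g)$ it contains $\mathfrak{m}_{n+1}e_{p+1}+x_{n+1}\mathcal{O}_{n+1}^{p+1}$, and together with $T(\operatorname{im} j)=\{(h(x_1,\dots,x_n),0)\}$ this spans $J^k(n+1,p+1)$. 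Both groups retract to $\GL_n\times\GL_p$, so the cohomological bookkeeping is unaffected.

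The genuine gap is step (i). That step is the whole content of Damon's theorem, and you never prove it. Most of the routes you list do not work:
\begin{itemize}
\item Adding a \emph{trivial} line to source and target is the specialization $t=0$. It only says that $[\sigma\eta]$ restricts to $[\eta]$, and tells you nothing about dependence on $B-A$.
\item The claim that ``the only natural characteristic classes of the virtual bundle are polynomials in its Chern classes'' presupposes that the Thom polynomial is a characteristic class of $f^*TP-TN$. That is exactly what has to be shown.
\item The stabilizer/localization remark is not an argument.
\item There is no geometric reason for a cancellation property of $[\eta]$ on $J^k(n,p)$ itself, since $\eta$ need not be a trivial unfolding.
\item Part (ii) in its ``expressed in the $c$'s'' form presupposes (i).
\end{itemize}
So (i) has to be extracted from the bare restriction identity, and your last paragraph only asserts that this can be done (``such invariants are exactly $H^*(BU)$'').

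That assertion is true, and the proof is short.
\begin{enumerate}
\item Iterating the identity $m$ times gives
\[ [\sigma^{m}\eta](\alpha,\gamma_1,\dots,\gamma_m;\beta,\gamma_1,\dots,\gamma_m)=[\eta](\alpha;\beta). \]
\item Put $g_j=e_j(\gamma_1,\dots,\gamma_m)$. The left-hand side is $Q=[\sigma^m\eta]$ evaluated at the graded pieces of $(1+a_1+\cdots)(1+g_1+\cdots)$ and $(1+b_1+\cdots)(1+g_1+\cdots)$.
\item The elements $a_1,\dots,a_n,b_1,\dots,b_p,g_1,\dots,g_m$ are algebraically independent, so this is an identity of polynomials. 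You may therefore substitute for $1+g_1+g_2+\cdots$ the inverse series $(1+a_1+\cdots+a_n)^{-1}$, truncated after degree $m$.
\item Take $m\ge d=\codim\eta$. Only variables of degree $\le d$ occur in $Q$, and in these degrees the two products become $1$ and $(1+b_1+\cdots)/(1+a_1+\cdots)$.
\item Hence $[\eta]=\rho^{n\to p}(T)$ with $T(c_1,c_2,\dots):=Q(0,\dots,0;c_1,c_2,\dots)$. In words, $T$ is $[\sigma^m\eta]$ with every $a_j$ set to $0$ and each $b_j$ renamed $c_j$.
\item The same computation for $\sigma^i\eta$ gives the same $T$. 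Indeed, the case $t=0$ of your identity shows that $[\sigma^m\eta]$, as a polynomial in $a_1,\dots,a_d,b_1,\dots,b_d$, does not depend on $m$ once $m\ge d$.
\end{enumerate}
This substitution is the concrete form of your ``subtract $A$ in $BU$'' idea, or equivalently the stable version of the cancellation characterization of supersymmetric polynomials. With it inserted, your outline becomes an essentially complete proof.
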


In Section \ref{sec:tp-and-factorization} we provide some classical examples of stable Thom polynomials and show how to calculate them using the Schur Factorization Theorem \ref{thm:schur-factorization}.

\bigskip

For the further study of stable Thom polynomials we need to distinguish the $l:=p- n\geq0$ and the $l<0$ cases. 
\subsection*{A second structure theorem for $p\geq n$}

For $l=p-n\geq0$, a second structure theorem was introduced in \cite{FeherRimanyi2007} based on the \emph{zero unfolding} operation: given $f=(f_1,\dots,f_p)\in J^k(n,p)$, we define its zero unfolding as $\sigma_0(f):=(f_1,\dots,f_p,0)$. This operation readily extends to contact singularities. The key result of \cite{FeherRimanyi2007} is that the stable Thom polynomials of the sequence of iterated zero unfoldings of a contact singularity can be arranged into a \emph{Thom series}. A classical example is due to Ronga \cite{Ronga1972}: for the singularity $A_2=\Si^{1,1}$, for $l\geq 0$:
\[
\Tp(A_2(l))=s_{2^{l+1}}+2s_{2^l,1^2}+4s_{2^{l-1},1^3}+\ldots
\]
Notice that instead of the notation $\sigma_0^l(A_2)$ we use the classical and convenient notation $A_2(l)$, where $l$ refers to the relative codimension $p-n$. Later several Thom series were calculated (e.g \cite{BercziSzenes,FeherRimanyi2012,Ozturk,Kazarian}) and now the theory is well developed. A large collection of  Thom series and Thom polynomials for $l\geq 0$ are available on \cite{tpp}. 

In Section  \ref{sec:p>n} we review the theory of Thom series and show how to calculate $\Tp(A_2(l))$ for $l\geq 0$ using the Schur Factorization Theorem \ref{thm:schur-factorization}.

\subsection*{Theory of Thom polynomials for $p< n$.} Despite the fact that the $l\leq 0$ cases have more enumerative applications, this topic is less developed. The ambitious goal of the authors was to develop the theory for the $p< n$ case. We were successful in the case of function singularities, in particular we introduce an analogue of the Thom series.

In negative relative codimension $l< 0$  the notion of zero unfolding is replaced by the notion of \emph{quadratic unfolding}: given $f=f(x_1,\dots,x_n)$ we define $\sigma_q(f)=f+x_{n+1}^2$, see Section \ref{sec:quadratic-unfolding}. In particular, we define an analogue of Thom series for contact function singularities. The following example demonstrates the situation: with  $r=-l> 0$ and $A_2(-r)=\Si^{r+1,1}(-r)$, we have by \cite[\S 6]{FeherKomuves}
\[
\Tp(A_2(-r))=(r+1)s_{2+r} + 2 s_{r+1,1}.
\]
In particular, the support of Schur polynomials is finite, but the coefficients are no longer constant. Note that for $\eta\subset J^k(n,1)$ we use the traditional notation $\eta(-r)$ instead of $\sigma_q^{r+1-n}(\eta)$. In general, we will prove the following theorem (see Theorem \ref{thm:poly4p=1}  for a more precise statement):
\begin{theorem*}
	For a stable function singularity $\eta$ the stable Thom polynomial $\Tp(\eta(-r))$ is a polynomial in $r$ in the following sense: for $r\geq \ga(\eta)-2$,
	\[ \Tp(\eta(-r))=\sum_{|\alpha|+i+1=\gamma(\eta)} g_{\alpha,i}(r)s_{r+1+i,\alpha},\]
	where $i\geq 0$, $\al=(\al_2\stb \al_n)$ a partition, $g_{\alpha,i}(r)$ are polynomials in $r$ of degree at most $i$  and $\gamma(\eta)=\codim(\eta(-r))-r$. 
\end{theorem*}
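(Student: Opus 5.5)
We would compute $\Tp(\eta(-r))$ by localization/restriction along the lines of the Rimányi method, organized through the Schur Factorization Theorem \ref{thm:schur-factorization}. The first step is to realize $\eta(-r)$ concretely. Write $\eta$ as a germ $f(x_1,\dots,x_m)$ with zero $2$-jet, so that the corank is $m$ and $\gamma(\eta)$ depends only on $\eta$. Then $\eta(-r)$ is a contact class in $J(m+r', 1)$, where $r=m+r'-1$, and it is represented by $f+x_{m+1}^2+\dots+x_{m+r'}^2$.

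The stable Thom polynomial is determined by restriction to $\Sigma^{m}$. That locus is the image of a $\Sigma^m$-bundle, and in the equivariant setting its normal structure splits into
\begin{itemize}
\item the kernel $K\cong\C^m$,
\item the nondegenerate quadratic part, of rank $r'$, with symmetry group $O(r')$,
\item the target line $L$.
\end{itemize}
The Thom polynomial of $\eta(-r)$ therefore factors as $[\Sigma^{m}(-r)]$, which is the Schur/Porteous class $s_{(r+1)^m}$-type class of corank $m$ locus for functions, times a correction. In concrete terms, we push forward from a Grassmannian-type resolution the class of $\eta$ inside the $K$-jet space $J(K,L)$, twisted by the quotient bundle.

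In the resulting formula the dependence on $r$ enters in only two ways. The first is the quadratic-part rank $r'$. Its $O(r')$-equivariant class restricts to trivial maximal torus weights plus, up to sign, the Euler class of $L\otimes\mathrm{Sym}^2$ on the rank-$r'$ part. The second is the exponent coming from the quotient $c(L)/c(\text{source})$. Both produce expressions of the form $c_{r+1+i}\cdot(\text{fixed})$.

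The key steps, in order, are as follows.

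First, we write
\[
\Tp(\eta(-r))=\int_{\mathrm{Gr}}[\eta]_{K,L}\cdot \prod(\text{weights})\cdot \mathrm{Res},
\]
via the factorization theorem, so that everything is a residue/pushforward of a fixed polynomial in the Chern roots of $K$ and $c(L)$. Here $[\eta]_{K,L}$ is independent of $r$ and has degree $\gamma(\eta)+\dots$.

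Second, we expand in Schur functions of the quotient variables. A pushforward from $\mathrm{Gr}_m$ applied to a monomial in the roots of $K$ produces Schur functions whose first row is $r+1+i$. Here $i$ is the excess exponent. The remaining rows $\alpha$ come from the rest of the monomial, so the relation $|\alpha|+i+1=\gamma$ is just degree counting: $\codim=r+\gamma$.

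Third, we track the $r$-dependence of the coefficients. This dependence arises from factors such as $(1+u)^{r'}$ expanded in $u=c_1(L)$ or in the Chern roots. The coefficient of the $i$-th power of such an expansion is $\binom{r'}{i}$, or a sum of such binomials with fixed shifts. These are polynomials in $r$ of degree at most $i$, because raising the first row by one unit costs exactly one power of the weight that carries $r$.

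Finally, the range $r\geq \gamma-2$ is what guarantees that all the partitions $(r+1+i,\alpha)$ are honest partitions, i.e.\ $r+1+i\geq\alpha_2$. Since $\alpha_2\leq|\alpha|\leq\gamma-1$, this holds in that range. It also guarantees that no straightening or cancellation occurs and that the stability in $n$ from Damon's theorem applies.

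The main obstacle is the third step. We must show that the coefficient of $s_{r+1+i,\alpha}$ is genuinely a polynomial of degree at most $i$, and not merely a quasi-polynomial or a combination of binomials of higher degree. The difficulty is that Schur expansion of the pushforward mixes several monomials. Our first attempt would be to use the stabilization pattern: prove that the Thom series coefficients satisfy a finite difference identity in $r$, namely $\Delta^{i+1}g_{\alpha,i}=0$. We would establish this by comparing $\eta(-r)$ with $\eta(-r-1)$ via one more quadratic unfolding. That comparison multiplies the integrand by a single linear factor of the form $(1+\text{weight})$, which shifts $i$ by at most one.
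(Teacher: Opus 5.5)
Your outline has a genuine gap exactly where you place the ``main obstacle'', and the proposed remedy does not close it.

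The integral $\int_{\mathrm{Gr}}[\eta]_{K,L}\cdot\prod(\text{weights})\cdot\mathrm{Res}$ is never defined, and it cannot compute $\Tp(\eta(-r))$ directly. A pushforward from a Grassmannian of Hessian kernels $K\subset\C^n$ produces only the \emph{unstable} class $[\eta(n,1)]\in\Z[a_1,\dots,a_n,u]$. Even that requires identifying the nonlinear (splitting-lemma) locus over each $K$ with $[\eta]$ evaluated on $K,L$, including where the Hessian on $\C^n/K$ degenerates. Doing this carefully amounts to reproving a structure theorem.

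The paper's proof uses precisely such a theorem, Kazarian's Theorem \ref{thm:kazarian}: $[\eta(n,1)]=e(J^1(n))\,L_n(K_\eta)$ with $K_\eta$ \emph{independent of $n$}. This is combined with Lemma \ref{small-n=0}: $L_n(k_d)=\sum c_{I,q}(n)a^Iu^q$ with $\deg_n c_{I,q}\le q$. That is where your binomial coefficients actually come from, via $(1+u)^{n-j}$ in Definition \ref{def:Ln}.

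Your heuristic that each unit of excess in the first row costs one power of the $r$-carrying weight is the factorization $\rho^{n\to1}(s_{n+i,\alpha})=(-1)^{|\alpha|}e(J^1(n))\,s_{\alpha^T}(a)\,u^i$. But without a fixed class pushed through an explicitly $n$-polynomial homomorphism, nothing forces the coefficient of $s_{\alpha^T}(a)u^i$ to be a polynomial of degree $\le i$ in $n$.

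The finite-difference plan does not substitute for this. One more quadratic unfolding does not multiply the class by a linear factor: already $L_n(K_{A_2})=nu-2a_1$ changes additively with $n$, and the classes for $n$ and $n+1$ live in different rings. So $\Delta^{i+1}g_{\alpha,i}=0$ would itself need the structure theorem.

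The second missing step is the passage from unstable to stable classes. Damon's theorem only gives $\rho^{n\to1}\Tp(\eta(1-n))=[\eta(n,1)]$. This determines $\Tp(\eta(1-n))$ only modulo $\ker\rho^{n\to1}$, the span of the $(n,1)$-thick Schur polynomials.

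Your observation that $r\ge\gamma-2$ makes every $(r+1+i,\alpha)$ a partition is correct. What actually identifies the Thom series with $\Tp(\eta(-r))$ is the following:
\begin{itemize}
\item For $r\ge\gamma-3$, no thick index ($\alpha_2\ge n+1=r+2$) occurs in the sum. The only candidate, $i=0$, $\alpha=(\gamma-1)$, gives the non-partition $(r+1,r+2)$.
\item All thin coefficients vanish by the thin property of Theorem \ref{thm:schur-factorization}, since $[\eta(n,1)]$ is divisible by $e(J^1(n))=\rho^{n\to1}(s_{(n)})$.
\end{itemize}
Relatedly, $\Tp(\eta(-r))$ itself does not factor as the class of a corank locus times a correction in the stable ring. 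Divisibility is available only for the unstable class, and only by $e(J^1(n))$.
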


We call the right hand side the \emph{Thom series} of $\eta$, which for large $r$ gives the value of the stable Thom polynomial $\Tp(\eta(-r))$, but for small values, corrections are needed --- reminiscent of the relationship between the Hilbert function and polynomial. We introduce the Thom series of function singularities in Section \ref{sec:p<n}.

The proof of Theorem \ref{thm:poly4p=1} is given in Section \ref{sec:ltp-to-tp}. It is based on \emph{Kazarian's structure theorem} \cite{Kazarian2003} (see Theorem \ref{thm:kazarian}), which relates the \emph{Legendre Thom polynomial} $K_\eta$ \cite{Vassiliev1993} of the function singularity $\eta$ and the (unstable) Thom polynomials of the quadratic unfoldings of $\eta$.

 We describe the range and extent to which the different Thom polynomials determine each other on Figure \ref{fig:intro}. (Here we indicate the source and target dimension for $\eta\subset J(n,p)$ by the notation  $\eta(n,p)$.) For further details see Section \ref{sec:thompolynomialsconnection}.

\subsection*{Computations}
A consequence of the proof of Theorem \ref{thm:poly4p=1} is that the Legendre Thom polynomial $K_\eta$ contains the same information as the Thom series of $\eta$ and thus determines the stable Thom polynomials $\Tp(\eta(-r))$ for $r$ large enough. To calculate the entire family $\{\Tp(\eta(-r))\}$ we need to calculate $\Tp(\eta(-r))$ for small values of $r$. For this we use the restriction equation method of Rim\'anyi  \cite{Rimanyi}.%

This implies that there are 3 levels to calculate with increasing information:
\begin{enumerate}
	\item \textbf{The unstable Thom polynomial $[\eta(n,1)]$.} We calculate a wide variety of examples in Section \ref{sec:binary}, including infinite families. In particular we calculate the unstable Thom polynomials of $J_{10}$, $X_{10}$, $X_{11}$, $T_{2,5,5}$, $Z_{11}$, $Z_{12}$, $W_{12}$, $W_{13}$.
	\item \textbf{The entire family of unstable Thom polynomials $[\eta(n+i,1)]$.} This is equivalent to the calculation of the Legendre Thom polynomial $K_\eta$, and also  equivalent to the calculation of the Thom series of $\eta$. We obtain the Legendre Thom polynomials of the ternary  singularities $T_{3,3,4}, T_{3,4,4}, T_{4,4,4},$ ${Q_{10}, S_{11}}$ in Section \ref{sec:binary}. 
	\item \textbf{The entire family of stable Thom polynomials $\{\Tp(\eta(-r))\}$.} In addition to the above, this requires calculating the stable Thom polynomials $\Tp(\eta(-r))$ for small $r$. We use the restriction equation method to calculate these for contact function singularities up to $\ga\leq 6$, that is:  $A_2,A_3, A_4, D_4, A_5, D_5, A_6, D_6, E_6$, see Section \ref{sec:calculations}.
\end{enumerate}

 \begin{figure*}
 	\centering
 	\begin{tikzpicture}[>=stealth]
 		\def\d{5.2}
 		\node (A) at (0,0) {$\Tp(\eta(1-n))$};
 		\node (B) at (\d,0) {$[\eta(n,1)]$};
 		\node (C) at (2*\d,0) {$K_\eta$};
 		
 		\draw[->, bend left=25] (A) to[looseness=0.7]  node[above] {\tiny $\rho^{n\to 1}$ (Thm.~\ref{damon})} (B);
 		\draw[->, bend left=25] (B) to[looseness=0.7]  node[below,align=center] {\tiny Obs. \ref{obs:tptostable}\\\tiny if $n\geq \ga(\eta)-2$} (A);
 		
 		\draw[<-, bend left=25] (B) to[looseness=0.7] node[above] {\tiny $L_n$ (Thm.~\ref{thm:kazarian})} (C);
 		\draw[<-, bend left=25] (C) to[looseness=0.7]  node[below] {\tiny if $\ga(\eta)-1<\binom{n+2}{2}$} (B);
 	\end{tikzpicture}
 	\caption{The relationship between the stable Thom polynomials, Thom polynomials and Legendre Thom polynomials for function singularities.}\label{fig:intro}
 \end{figure*}

Earlier, computations of Thom polynomials for $l=-1$ have been carried out in \cite{Ando1996}, \cite{PragaczWeber}, \cite{Ohmoto2009}. Some Thom series for $l<0$ appear in guise in \cite{FeherKomuves}. Although some stable Thom polynomials for $l=-1$ have been determined in \cite{Ohmoto2009}, not all computed formulas are easily accessible in the literature \cite[p.\ 24]{OhmotoJap}.
Kazarian computed several Legendre Thom polynomials \cite{Kazarian2003}, \cite{Kazarian2003b}. 

The papers \cite{MikoszPragaczWeber2009}, \cite{MikoszPragaczWeber2011} demonstrated that $Q$-polynomials (discussed in Section \ref{sec:Q}) form a convenient basis for Legendre Thom polynomials, in particular they are positive in the $Q$-basis. More importantly Kazarian's structure theorem \ref{thm:kazarian} has a particularly simple description in the $Q$-basis, what we explain in Section \ref{sec:QL}. Our calculations of Legendre Thom polynomials use this description and the $Q$-factorization Theorem \ref{thm:q-factorization}.
\smallskip

Our computations proceed as follows.

(1) Some of the singularities listed above (e.g.\ $X_{10}, Z_{11}, W_{12}$) are specified by having 2 variables and their lowest degree $d$ behaviour, we call the resulting contact singularities \emph{binary singularities}. So the study of the Thom polynomials of such binary singularities reduces to the study of $\GL_2$-equivariant classes of the coincident-root stratification in the representation $\Pol^k(\C^2)$, computed in \cite{FeherNemethiRimanyi}. We compute Thom polynomials of binary singularities based on these considerations. 

We also generalize the coincident root locus description to a class of singularities which are specified by the property that in some interval $[d,d+r]$ of degrees, the degree $d+i$ part shares a root of multiplicity $b_i$; we call such singularities \emph{multi-binary}. By generalizing the equivariant class computation of \cite{FeherNemethiRimanyi}, we compute the classes of such \emph{shared-root loci} in $\Pol^d(\C^2)\oplus \ldots\oplus \Pol^{d+r}(\C^2)$ and derive the Thom polynomials of multi-binary singularities.

(2)  Following this line of inquiry, we use the results of \cite{Komuves} on the equivariant classes of ternary cubic strata to compute the following Thom polynomials in $J(3,1)$. These determine the Legendre Thom polynomials of the singularities listed in (2) above. By Figure 1, together with Kazarian's structure theorem (Theorem \ref{thm:kazarian}) these results also determine the Thom series for these singularities.

(3) Finally, we apply Rim\'anyi's restriction equation / interpolation method \cite{Rimanyi} to compute $\Tp(\eta(-r))$ for small $r$.

\bigskip

A similar structure theorem for contact, non-function singularities is missing. In Section \ref{sec:beyond} we illustrate some of the difficulties on the example of the Thom--Boardman singularity $\Si^{i,1}(2-i)$: polynomiality still holds, but the stabilization of the terms in the Schur expansion is not unique, furthermore the support of the Schur expansion is no longer finite. 

In an upcoming companion paper \cite{FeherMatszangoszupcoming}, we will give enumerative applications of Thom polynomials. The results of this paper can be applied extensively to such problems. In Section \ref{sec:enum} we sketch some examples, for instance on counting projective $3$-planes in $\PP^5$ intersecting a given complete intersection 3-fold in an $E_6$-singularity.

In Section \ref{sec:open} we collected some open problems.  We explain the details of the restriction equations in Appendix \ref{sec:weights}. In Appendix \ref{app:fibered} we explain a lemma on fibered resolutions needed for calculations of Thom polynomials of multi-binary singularities.

\subsection*{Acknowledgements}

We thank Toru Ohmoto, Rich\'ard Rim\'anyi and Andrzej Weber for inspiring discussions.

\part{Theory}
\addtocontents{toc}{\protect\setcounter{tocdepth}{2}}
\section{Thom polynomials and the role of the factorization formula} \label{sec:tp-and-factorization}
For technical details on Thom polynomials we refer the reader to \cite{FeherRimanyi2012}. Two recent surveys \cite{Rimanyi2025}, \cite{Ohmoto2026} give a wider perspective on the theory of  Thom polynomials.

As explained in \cite{FeherRimanyi2012} the study of Thom polynomials can be reduced to the study of the action of the contact groups $\mathcal{K}=\mathcal{K}^k(n,p)$ on the jet spaces $J^k(n,p)$.  We call a $\mathcal{K}$-invariant subvariety $\eta$ a (contact) \emph{singularity} or \emph{singularity type}. We will always assume that $\eta$ is $k$-determined. Such a singularity $\eta$ admits a $\mathcal{K}$-equivariant cohomology class 
\begin{equation}\label{eq:TP}
 [\eta\subset J^k(n,p)] \in H^*_\mathcal{K}\iso\Z[a_1,\dots,a_n,b_1,\dots,b_p],
\end{equation}
and we call this class \emph{the Thom polynomial of $\eta$}. Increasing $k$ does not change the Thom polynomial (using the natural identification of the equivariant cohomology rings $H^*_{\mathcal{K}^k}$ for different $k$'s). Singularity types for different values of $n$ and $p$ with $p-n$ fixed can be identified via trivial unfoldings, and their Thom polynomials are essentially the same (see Theorem \ref{damon}). This leads to the notion of stable Thom polynomial that we now discuss.

\subsection{The stable Thom polynomial}
The trivial unfolding operation 
\[ \sigma: J^k(n,p)\to J^k(n+1,p+1),\ \ \sigma(f)=(f_1,\dots,f_p,x_{n+1}) \]
for $f=(f_1,\dots,f_p)\in J^k(n,p)$ assigns a singularity $\sigma(\eta)\subset  J^k(n+1,p+1)$ to a singularity $\eta \subset J^k(n,p)$. Trivial unfolding doesn't change the quotient algebra  (for the definition of the quotient algebra see \cite{FeherRimanyi2012}), the codimension of the singularity and the effect on the Thom polynomial is well understood: in \quot{quotient variables} the Thom polynomials of the iterated trivial unfoldings are the same, in the following sense:
  
  \begin{definition} We define a ring homomorphism
  	\[\rho^{n\to p}:   \Z[c_1,\dots,c_i,\dots]\to \Z[a_1,\dots,a_n,b_1,\dots,b_p]\]
  by the formal power series
  \[1+c_1+c_2+\cdots=\frac{1+b_1+b_2+\cdots+b_p}{1+a_1+a_2+\cdots+a_n}.\]
   \end{definition}
 For example $\rho^{n\to p}(c_1)=b_1-a_1$  and $\rho^{n\to p}(c_2)=b_2-a_2+a_1^2-a_1b_1$ (if $n>1$ and $p>1$).

\begin{theorem}[Damon] \label{damon} For any $k$-determined contact singularity $\eta \subset J^k(n,p)$ there is a polynomial  $\Tp(\eta)\in \Z[c_1,\dots,c_i,\dots]$ that we will call the \emph{stable Thom polynomial}, such that
	\[ [\sigma^i(\eta)\subset J^k(n+i,p+i)]=\rho^{n+i\to p+i}(\Tp(\eta)).\]
\end{theorem}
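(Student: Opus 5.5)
The plan is to reduce the statement to a computation on the orbit of a single representative, using equivariant localization together with a structural property of contact groups.

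\textbf{Step 1: reduce to the maximal torus of the linear part.} The contact group $\mathcal{K}^k(n,p)$ retracts onto $\GL_n\times\GL_p$. Equivariant classes are therefore determined by restriction to the maximal torus $T=(\C^*)^n\times(\C^*)^p$, with weights $\alpha_1\stb\alpha_n$ on the source and $\beta_1\stb\beta_p$ on the target. Then $a_j,b_j$ are the elementary symmetric polynomials in the $\alpha$'s and $\beta$'s.

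\textbf{Step 2: vanishing under diagonal specialization.} Let $\eta\subset J^k(n+1,p+1)$ be $\mathcal{K}$-invariant and stable under trivial unfolding, for instance $\eta'=\sigma(\eta)$. The key claim is:
\begin{quote}
the class $[\eta']$, restricted to the subtorus where $\alpha_{n+1}=\beta_{p+1}$, equals $[\eta\subset J^k(n,p)]$ (pulled back).
\end{quote}
To prove this, write $J^k(n+1,p+1)$ as $J^k(n,p)\oplus W$, where $W$ collects all components involving the new variable or the new target coordinate. On the subtorus $\alpha_{n+1}=\beta_{p+1}=t$, use the $\mathcal{K}$-action to show that $\sigma(\eta)$ is, up to a group-translate, a fibration whose fibre over $J^k(n,p)$ is the affine-linear space $\{$jets of the form $(g,x_{n+1})+w\}$.

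More precisely, fix the jet $\sigma(g)$ with $\partial f_{p+1}/\partial x_{n+1}\ne0$. By contact equivalence (implicit function theorem), every jet in a neighbourhood with invertible $\partial_{n+1}f_{p+1}$ is equivalent to a trivial unfolding of a jet in $J^k(n,p)$. Hence $\eta'\cap U=\pi^{-1}(\eta)\cap U$ on the open set $U$ where $\partial_{n+1}f_{p+1}(0)\neq0$. The direction $x_{n+1}\mapsto e_{p+1}$ has weight $\beta_{p+1}-\alpha_{n+1}$, which is zero on the subtorus. So the point $\sigma(g)$ with that coordinate set to $1$ is fixed by the subtorus. Restricting to the $T'$-invariant affine slice through that point gives exactly $[\eta]$, in the cohomology of $T'$ rather than $T$. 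The condition for $\Tp$ to exist is precisely that $[\sigma^i\eta]$ satisfies this compatibility for every $i$.

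\textbf{Step 3: pass to quotient variables.} Work in the variables $a_j$ and $c_j$, where $\rho$ defines $c$. A polynomial in $a,b$ that is supersymmetric, i.e.\ whose substitution $\alpha_{n+1}=\beta_{p+1}=t$ is independent of $t$ (the cancellation property), can be written uniquely as a polynomial in the $c_j$ of bounded degree, once $n,p$ exceed its degree. This is the classical characterization of the ring generated by the supersymmetric power sums $\sum\beta^m-\sum\alpha^m$.

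The cancellation property for $[\sigma^i\eta]$ follows from Step 2 together with the symmetry in the variables. Uniqueness and compatibility of the resulting polynomials along $i$ give a single $\Tp(\eta)$, since $\rho^{n+i\to p+i}$ composed with the specialization is $\rho^{n+i-1\to p+i-1}$.

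\textbf{Main obstacle.} The hard step is making Step 2 rigorous: showing that the full equivariant class, not just its restriction to the open set $U$, specializes correctly. The complement of $U$ has positive codimension, but it can carry class contributions. I would handle this by the torus-fixed-point / slice argument described above, computing the restriction of $[\eta']$ to a $T'$-fixed point of $U$. The homotopy equivalence $U\simeq\mathrm{pt}$ is not $T$-equivariant, only $T'$-equivariant, and that is exactly why the specialization appears. A further point to check is the polynomial range for small $n$, where one needs $n,p$ large to identify the $c$-polynomial uniquely. This is resolved by defining $\Tp$ from a large $i$ and descending via Step 2.
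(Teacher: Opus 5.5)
The paper does not prove this theorem. It quotes it from Damon \cite{damon1972thom} and uses it as a black box, so there is no argument in the text to compare against. Your outline is the standard modern proof: restricted to $\GL_n\times\GL_p\times\GL_1$ with the diagonal $\GL_1$, the class of $\sigma(\eta)$ becomes $[\eta]$, so it is supersymmetric and hence lies in the image of $\rho$. It is correct in substance.

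\textbf{The ``main obstacle''.} This obstacle is not real, and the justification you give for it is wrong. $U$ is the complement of the $T$-invariant hyperplane $\{\partial_{n+1}f_{p+1}(0)=0\}$, so it is homotopy equivalent to a circle, not to a point. Also, $\sigma(g)$ is $T'$-fixed only when $g=0$. What the argument actually needs is the following.
\begin{enumerate}
\item The slice $S=\sigma(J^k(n,p))$ lies entirely in $U$. It is a $T'$-invariant affine space through the fixed point $\sigma(0)$, so $H^*_{T'}(V)\to H^*_{T'}(S)$ is an isomorphism.
\item Let $\pi\colon U\to J^k(n,p)$ be $f\mapsto (f_i(x',h(x')))_{i\le p}$, where $x_{n+1}=h(x')$ solves $f_{p+1}=0$. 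This is an equivariant submersion.
\item With $j\colon g\mapsto\sigma(g)$ we have $\pi\circ j=\mathrm{id}$.
\item $\overline{\mathcal K\sigma(\eta)}\cap U=\pi^{-1}(\eta)$. This uses the cancellation $\sigma(g)\sim_{\mathcal K}\sigma(g')\Rightarrow g\sim_{\mathcal K}g'$, e.g.\ via local algebras, which you use implicitly.
\end{enumerate}
Then $[\sigma(\eta)]|_{T'}=j^*\pi^*[\eta]=[\eta]$, and the complement of $U$ never enters. There is an even more direct route. The map $\pi$ is invariant under both new $\GL_1$ factors, hence $T$-equivariant. Moreover $H^*_T(U)=H^*_T/(\beta_{p+1}-\alpha_{n+1})$. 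So restriction to $U$ is exactly the diagonal specialization, with no fixed-point argument needed.

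\textbf{Integrality in Step 3.} Step 3 needs an integral statement, and over $\Z$ the ring generated by the supersymmetric power sums $p_m=\sum\beta^m-\sum\alpha^m$ is too small. For $n=p=1$ one has $\rho(c_2)=\alpha(\alpha-\beta)=\tfrac12(p_1^2-p_2)$, which is not an integral combination of $p_1^2$ and $p_2$. Citing that characterization only gives $\Tp(\eta)\in\Q[c]$.

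The statement you need is: every separately symmetric integer polynomial with the cancellation property lies in $\rho^{n\to p}(\Z[c])$. It follows from Theorem \ref{thm:schur-factorization} by induction on $n$.
\begin{enumerate}
\item By induction, write the specialization $\alpha_n=0$ as $\rho^{n-1\to p}(P)$ and subtract $\rho^{n\to p}(P)$.
\item The difference $g$ satisfies $g|_{\alpha_n=\beta_p=t}=g|_{\alpha_n=\beta_p=0}=0$. By separate symmetry, $\prod_{i,j}(\beta_j-\alpha_i)$ divides $g$.
\item The nice property (ii) then exhibits $g$ as an image of $\rho^{n\to p}$.
\end{enumerate}
Combine this with injectivity of $\rho^{n+i\to p+i}$ in degree $\codim\eta<(n+i+1)(p+i+1)$, which is the thick property. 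Your define-at-large-$i$-then-descend step then completes the proof.
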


\begin{remark}(Notation for stable singularities  $\eta(l)$)\label{rmk:stable_singularities}
	If we refer to a singularity $\eta\subset J^k(n,n+l)$ or any of its trivial unfoldings \[\eta(n+i,n+i+l):=\si^i(\eta)\subset J^k(n+i,n+i+l),\] then we use the notation $\eta(l)$, where $l$ denotes the relative codimension. So formally  $\eta(l)$ is the direct limit of trivial unfoldings of $\eta$, and because of that it is also called a stable singularity (not related to the notion of local or infinitesimal stability). This notation hides the fact that there is a ``first" instance of a singularity appearing, e.g.\ $I_{2,2}(0)$ appears first in $J(2,2)$.
\end{remark}
\begin{remark}
In fact, classically $\Tp(\eta)$ is what is commonly called the Thom polynomial. However, for our discussion, it is important to distinguish these notions, therefore we will call $[\eta(n,p)]$ the Thom polynomial, and $\Tp(\eta)$ the stable Thom polynomial.
\end{remark}
\subsection{The factorization formula for Schur polynomials}\label{sec:factorization}
The graded ring homomorphisms $\rho^{n\to p}$ play a key role in our study. Before stating their key properties, let us fix some notational conventions. Given an alphabet $c$ in graded variables $c_i$ and a partition $\la=(\la_1\stb \la_r)$, define the Schur polynomial
\begin{equation}\label{eq:schur}
	s_\la(c) =\det[c_{\la^T_i+j-i}]_{i,j=1}^{\ell(\la^T)}
\end{equation}
where $\la^T$ denotes the partition conjugate to $\la$. See \cite[Appendix B]{FeherMatszangosz2025} for a short discussion about  different possible conventions for the choice of $s_\la$ in Thom polynomial theory.  We will also occasionally use  \quot{\emph{Chern root variables}}, i.e. treat $c_i$ as the $i$-th elementary symmetric polynomial of some variables $x_1,\dots,x_n$.

The image $\rho^{n\to p}(s_\lambda)$ behaves differently depending on the shape of the partition $\lambda$. So to express $\rho^{n\to p}(s_\lambda)$ we introduce the following:

\begin{definition}\label{def:thick}
	Let $n$ and $p$ be fixed. Then partitions fall into three distinct categories: 
	\begin{enumerate}
		\item If $n^p \not\subset \lambda$ then we call $\lambda$ \emph{thin}.
		\item If $n^p \subset \lambda$ but $(n+1)^{(p+1)}\not\subset\lambda $ then we call $\lambda$ \emph{nice}. 
		\item If  $(n+1)^{(p+1)}\subset\lambda $ then we call $\lambda$ \emph{thick}.	
	\end{enumerate}
\end{definition}
	These names refer to the shape of the Young diagrams, see Figure \ref{fig:young}. Notice that a nice partition can be uniquely written in the form  $(n^p+\beta,\alpha)$, where $l(\beta)\le p$ and $\alpha_1\le n$;
i.e.~$\lambda_i=n+\beta_i$ for $i\leq p$ and $\lambda_i=\alpha_{i-p}$ for $i>p$.

\begin{figure}
	\begin{tikzpicture}[scale=0.5, line join=round]
	
	\newcommand{\referencepartition}{
		\foreach \x in {1,2,3,4} {
			\draw[densely dotted] (\x,0) -- (\x,-3);
		}
		
		\foreach \y in {-1,-2} {
			\draw[densely dotted] (0,\y) -- (5,\y);
		}
	}
	
	\newcommand{\referenceboundary}{
		\draw[thin] (0,0) rectangle (5,-3);
	}
	
	\newcommand{\highlightbox}[2]{
		\pgfmathtruncatemacro{\row}{#1-1}
		\pgfmathtruncatemacro{\col}{#2-1}
		
		\fill[yellow!35] (\col,-\row) rectangle ++(1,-1);
		\draw[very thick] (\col,-\row) rectangle ++(1,-1);
	}
	
	\begin{scope}[xshift=0cm]
	\referencepartition
	\highlightbox{4}{6}
	
	\draw[thick]
	(0,0) --
	(7,0) --
	(7,-1) --
	(6,-1) --
	(6,-2) --
	(3,-2) --
	(3,-4) --
	(1,-4) --
	(1,-5) --
	(0,-5) --
	cycle;
	
	\referenceboundary
	
	\node at (3.5,-6.5) {$\lambda_1=(7,6,3,3,1)$};
	\end{scope}
	
	\begin{scope}[xshift=10cm]
	\referencepartition
	\highlightbox{4}{6}
	
	\draw[thick]
	(0,0) --
	(8,0) --
	(8,-1) --
	(6,-1) --
	(6,-2) --
	(5,-2) --
	(5,-3) --
	(4,-3) --
	(4,-4) --
	(2,-4) --
	(2,-5) --
	(1,-5) --
	(1,-6) --
	(0,-6) --
	cycle;
	
	\referenceboundary
	
	\node at (4,-6.5) {$\lambda_2=(8,6,5,4,2,1)$};
	\end{scope}
	
	\begin{scope}[xshift=20cm]
	\referencepartition
	\highlightbox{4}{6}
	
	\draw[thick]
	(0,0) --
	(8,0) --
	(8,-1) --
	(8,-1) --
	(8,-2) --
	(7,-2) --
	(7,-4) --
	(1,-4) --
	(1,-5) --
	(0,-5) --
	cycle;
	
	\referenceboundary
	
	\node at (4,-6.5) {$\lambda_3=(8,8,7,7,1)$ };
	\end{scope}
	\end{tikzpicture}
	\caption{Thin ($\la_1$), nice ($\la_2$) and thick ($\la_3$) Young diagrams for $n=5$, $p=3$}
	\label{fig:young}
\end{figure}
\begin{theorem}\label{thm:schur-factorization}\mbox{}  The ring homomorphisms $\rho^{n\to p}$ satisfy the following properties:
\begin{enumerate}
\item[(i)\,] (thick property) $\ker (\rho^{n\to p})=\langle s_\lambda: \lambda \text{ is thick}\rangle$, where $\langle\ \rangle$ means the generated $\Z$-module.
\item[(ii)] (nice  property) For $\lambda=(n^p+\be,\al)$ nice,  
\[ 
\rho^{n\to p}(s_\lambda)
=
(-1)^{|\alpha|}\rho^{n\to p}(s_{n^p})s_{\alpha^T}(a)s_\beta(b).\]
\item[(iii)] (thin property) Suppose that $\rho^{n\to p}(P)=\rho^{n\to p}(s_{n^p})Q$. Then in Schur basis all thin coefficients of P are zero.
\end{enumerate}
\end{theorem}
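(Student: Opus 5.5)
The plan is to work in Chern-root variables. Write $\rho=\rho^{n\to p}$, so that $c=\rho(c)$ is the alphabet of the virtual bundle $B-A$ with $A$ of rank $n$ and $B$ of rank $p$. We then use the classical supersymmetric (hook) Schur theory: $\rho(s_\la)$ is the super-Schur polynomial $s_\la(b/a)$ in $p$ "positive" variables $b$ and $n$ "negative" variables $a$. The conventions of \eqref{eq:schur} make $s_\la(c)$ the Jacobi--Trudi determinant in the conjugate shape, so we first pin down the sign and transposition bookkeeping. This bookkeeping is what turns the classical factorization into the $(-1)^{|\al|}s_{\al^T}(a)$ factor in (ii).

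\textbf{(ii) Nice property.} We invoke the Berele--Regev factorization. If the Young diagram of $\la$ contains the $p\times n$ rectangle, written as $n^p$ here with $p$ rows of length $n$, then
\[
s_\la(b/a)=\prod_{i\le p,\,j\le n}(b_i-a_j)\cdot s_\be(b)\cdot s_{\al^T}(\pm a).
\]
Here $\be$ is the part to the right of the rectangle and $\al$ the part below it. Applying this to $\la=n^p$ itself identifies $\rho(s_{n^p})$ with the resultant-type product $\prod(b_i-a_j)$. Dividing then gives (ii); the sign $(-1)^{|\al|}$ comes from $s_{\al^T}(-a)=(-1)^{|\al|}s_{\al^T}(a)$. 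To prove the factorization we would use the Laplace/Jacobi--Trudi determinant in the form $s_\la(b/a)=\det$ of $h$'s of the virtual alphabet, or more simply the combinatorial (hook tableau) formula. For nice $\la$, every hook tableau is forced to fill the rectangle in a canonical mixed way. Equivalently, one can argue via the Sergeev--Pragacz formula and the product $\prod(b_i-a_j)$.

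\textbf{(i) Thick property.} Thick partitions contain the cell $(p+1,n+1)$. In the hook tableau model no semistandard supertableau with $p$ unprimed and $n$ primed letters exists for such a shape, so $\rho(s_\la)=0$. For the converse, take a general element $P=\sum c_\la s_\la$ with $\rho(P)=0$. The elements $\rho(s_\la)$ for non-thick $\la$, i.e. $\la$ in the $(p,n)$-hook, are linearly independent; this is the classical fact that hook Schur functions form a basis of the supersymmetric ring. So all non-thick coefficients vanish.

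\textbf{(iii) Thin property.} Suppose $\rho(P)=\rho(s_{n^p})Q$. By (i) we may discard the thick terms of $P$. Write $P=P_{\rm thin}+P_{\rm nice}$. By (ii), $\rho(P_{\rm nice})$ is divisible by $\Delta=\prod(b_i-a_j)$, so $\rho(P_{\rm thin})$ is divisible by $\Delta$ as well. We then show that a nonzero combination of thin hook Schur functions is never divisible by $\Delta$. The first approach to try is a degree/restriction argument: specialize $b_1=a_1$. This kills $\Delta$, and it maps $s_\la(b/a)$ to the hook Schur function with one fewer variable of each kind. We then need that thin partitions for $(n,p)$ stay within the $(p-1,n-1)$-hook, or are killed compatibly, and induct on $\min(n,p)$.

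This last step is the main obstacle: the specialization must preserve independence of the thin images. If it fails, the fallback is Pragacz's characterization. The supersymmetric polynomials divisible by $\Delta$ are exactly $\Delta\cdot\Z[a]^{S_n}\otimes\Z[b]^{S_p}$, which by (ii) is spanned by the images of nice $s_\la$. Given that, the thin coefficients vanish by the linear independence established in (i).
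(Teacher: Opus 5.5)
Your proposal is correct. For (i) and (ii) it uses the same classical input as the paper, which simply cites Fulton--Pragacz, Pragacz and Macdonald; you phrase it in supersymmetric language. Write $x_1,\dots,x_p$ and $y_1,\dots,y_n$ for the Chern roots of $B$ and $A$. Since $c$ plays the role of the elementary functions in \eqref{eq:schur}, one gets $\rho^{n\to p}(s_\lambda)=s_\lambda(x/y)$ in Macdonald's convention. Berele--Regev and the hook-Schur basis theorem are then exactly the facts being cited.

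One caution about (ii): your aside that for nice $\lambda$ every hook tableau ``is forced to fill the rectangle in a canonical mixed way'' is false. Already for $n=p=1$, $\lambda=(1)$, both one-box tableaux occur and together give $x_1-y_1$. What is forced lies \emph{outside} the rectangle: rows below the $p$-th contain only primed letters, and every row has at most $n$ primed entries. Extracting $\prod(x_i-y_j)$ needs a genuine bijective or determinantal argument, so lean on the citation there.

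For (iii), your fallback is exactly the paper's argument. By (ii), $\rho^{n\to p}$ maps the span of nice $s_\lambda$ isomorphically onto the multiples of $\Delta=\rho^{n\to p}(s_{n^p})$, and (i) handles the rest. No characterization of supersymmetric multiples of $\Delta$ is needed: $Q$ is assumed to lie in $\Z[a_1,\dots,a_n,b_1,\dots,b_p]$, so (ii) read backwards writes $\Delta Q$ directly as the image of a nice combination.

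Your primary specialization route in fact has no obstacle. A partition $\lambda$ is thin if and only if $\lambda_p\le n-1$, i.e.\ if and only if $\lambda$ lies in the $(p-1,n-1)$-hook. Setting $x_1=y_1$ kills $\Delta Q$. By the cancellation property it sends $s_\lambda(x/y)$ to $s_\lambda(x_2,\dots,x_p/y_2,\dots,y_n)$. This vanishes for every nice or thick $\lambda$ and is linearly independent over the thin ones. So the thin coefficients vanish in one step, with no induction and without using (ii) at all. That is what this route buys over the paper's, which needs both (i) and (ii).
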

\noindent
The proof of (i) and (ii) can be found
e.g.~in \cite[\S 3.2]{FultonPragacz}. 

Part (i) is a corollary of a result
of Pragacz \cite{Pragacz1988} on universally supported classes
({\em avoiding ideal} in the terminology of \cite{FeherRimanyi2004}) for $\Sigma^i$.

Part (ii) is sometimes called the {\em factorization formula} \cite[p.\ 58, Example 23]{MacDonald1995}. 

Part (iii)  is an immediate consequence of the fact  that  $\rho^{n\to p}$ maps $\Z$-module $\langle s_\lambda: \lambda \text{ is nice} \rangle$ isomorphically to the   $\Z$-module of polynomials divisible by $\rho^{n\to p}(s_{n^p})$. 
\begin{remark}\label{rmk:HomAB}
If $a_i$ and $b_i$ denote the Chern classes of a rank $n$ and $p$ vector bundle $A$ and $B$, then $\rho^{n\to p}(s_{n^p})=e(\Hom(A,B))$, where $e$ denotes the Euler class.
\end{remark}


Returning to the setting of Thom polynomials, we will repeatedly use the following observation to calculate stable Thom polynomials:

\begin{observation}\label{obs:tptostable}
The factorization formula implies that if $\eta\subset J^k(n,n+l)$ is a $k$-determined contact singularity, then the Thom polynomial $[\eta(n,n+l)]$ determines the stable Thom polynomial $\Tp(\eta(l))$  if $\codim[\eta\subset J^k(n,n+l)]<(n+1)(n+l+1)$. 
\end{observation}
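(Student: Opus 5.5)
The plan is to use the thick property of Theorem \ref{thm:schur-factorization} to show that $\rho^{n\to n+l}$ is injective on polynomials of the relevant degree. Write $p=n+l$ and $d=\codim[\eta\subset J^k(n,p)]$. By Damon's Theorem \ref{damon} with $i=0$,
\[
[\eta(n,p)]=\rho^{n\to p}(\Tp(\eta(l))).
\]
Here $\Tp(\eta(l))$ is homogeneous of degree $d$, since $c_i$ has degree $i$ and $\rho^{n\to p}$ is a graded homomorphism. Expand it in the Schur basis, $\Tp(\eta(l))=\sum_{|\la|=d}\mu_\la s_\la$. By part (i) of Theorem \ref{thm:schur-factorization}, $\ker\rho^{n\to p}$ is the $\Z$-span of $s_\la$ with $\la$ thick. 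The kernel is spanned by Schur polynomials, and $\rho^{n\to p}$ respects degrees, so the kernel in degree $d$ is exactly the span of the thick $s_\la$ with $|\la|=d$.

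The key step is that no thick partition has weight $d$ under the hypothesis. A thick partition contains the rectangle $(n+1)^{(p+1)}$, so $|\la|\geq (n+1)(p+1)=(n+1)(n+l+1)>d$. Hence $\rho^{n\to p}$ is injective on the degree-$d$ part of $\Z[c_1,c_2,\dots]$. Two candidates for $\Tp(\eta(l))$ with the same image $[\eta(n,p)]$ must therefore coincide, so the Thom polynomial determines the stable one.

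For the word ``determines'' to be effective, I would also note how to recover $\Tp(\eta(l))$ in practice. The non-thick degree-$d$ Schur polynomials map to linearly independent elements, since any relation among them would put a nontrivial element into the kernel. One can then solve for the coefficients $\mu_\la$ by linear algebra, and the nice property (ii) makes this explicit on the nice part.

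There is no real obstacle here. The only point that needs care is the degree bookkeeping: the kernel of a graded map is a graded submodule, and part (i) identifies it as spanned by homogeneous thick Schur polynomials. These are all of degree at least $(n+1)(n+l+1)$, so they cannot appear in degree $d$.
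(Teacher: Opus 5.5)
Your proof is correct and is exactly the argument the paper has in mind: Damon's theorem gives $[\eta(n,n+l)]=\rho^{n\to n+l}(\Tp(\eta(l)))$, and the thick property (i) of Theorem~\ref{thm:schur-factorization} places $\ker\rho^{n\to n+l}$ in degrees $\geq (n+1)(n+l+1)$, so $\rho^{n\to n+l}$ is injective in degree $\codim\eta$. One small correction: the homogeneity of $\Tp(\eta(l))$ does not follow from the gradedness of a single $\rho^{n\to p}$, since a preimage of a homogeneous element need not be homogeneous; it follows instead because $\Tp(\eta(l))$ works for all trivial unfoldings and $\bigcap_i\ker\rho^{n+i\to p+i}=0$. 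This is standard and does not affect your argument.
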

For example in \cite{FeherKomuves} this was a key ingredient in calculating the Thom polynomials of second order Thom-Boardman singularities. In the next section we show some easy and not widely known examples of its use in nonnegative relative codimension.

\subsection{One-dimensional source space}\label{sec:1dimsource}
In this section we will study the case of the one-dimensional source space, which as we will see is in sharp contrast with the case of one-dimensional target space. The former consists of the Morin singularities $A_k$, whereas the case of the one-dimensional target space consists of all function singularities.\footnote{Morin singularities in negative codimension are the corank one singularities. In Thom-Boardman language: in positive codimension $l>0$ these singularities are of class $\Si^1(l)$, and in negative codimension $l<0$ these are of class $\check{\Si}^1(l)=\Si^{1-l}(l)$, where this dual notation is introduced and discussed in Definition \ref{def:TB}.}

The Morin singularities $A_k(l)$ for $l\geq 0$ have representatives with one dimensional source space: 
\[A_k(1,l+1)=\big(x\mapsto (x^{k+1},\underbrace{0\stb 0}_l)\big).\]
The form of this singularity is a special case of the zero-unfolding, see Section \ref{sec:zerounfolding}. The Morin-singularities coincide with the Thom-Boardman classes $\Si^{1^k}$, so their Thom polynomial is a special case of the following Proposition, \cite[Example 7.16]{FeherRimanyi2004} \cite[\S 6.1]{FeherRimanyi2012}.
\begin{proposition} \label{prop:sigma-n-ad-k}
 Using the  notation $n^k=(n,n,\dots,n)$ for the partition $k$ times $n$ we have
	\[
	[\Si^{n^k}(n,p)]=\prod_{i=1}^ke(\Hom(\Sym^i(\C^n)),\C^p).
	\]
\end{proposition}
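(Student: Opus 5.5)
The plan is to use the standard test-curve/resolution argument for Thom–Boardman classes whose kernel is the whole source: for $\Si^{n^k}(n,p)$ the kernel of the derivative at a point of the locus is all of $\C^n$. This should make the locus a linear subspace of the jet space.

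\textbf{Step 1: identify the locus as a linear subspace.} The condition $\Si^{n}$ says $df=0$ at the origin. Iterating, $\Si^{n,n}$ says the second derivative vanishes on the kernel $\C^n$, and so on. So a $k$-jet $f=(f_1,\dots,f_p)$ lies in $\Si^{n^k}(n,p)$ exactly when all homogeneous components of $f$ of degrees $1,\dots,k$ vanish. To see that each successive Boardman condition really reduces to this, I will check inductively with the intrinsic derivative. Once $d^{j}f=0$ for $j<i$, the kernel-rank condition at level $i$ on the full kernel $\C^n$ is precisely the vanishing of the $i$-th homogeneous part, viewed in $\Hom(\Sym^i\C^n,\C^p)$. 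This gives
\[
\Si^{n^k}(n,p)=\{0\}\times\bigoplus_{i>k}\Hom(\Sym^i\C^n,\C^p)\subset J^k(n,p)=\bigoplus_{i=1}^{k'}\Hom(\Sym^i\C^n,\C^p).
\]
Here $k'$ is the jet order, taken large.

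\textbf{Step 2: check invariance.} The contact group $\mathcal K$ is homotopy equivalent to $\GL_n\times\GL_p$, and we may compute $\GL_n\times\GL_p$-equivariant classes. The subspace above is invariant under the full contact group, not just the linear part. Indeed, reparametrisation and multiplication by a matrix of functions preserve the ideal condition $f\in\mathfrak m^{k+1}\cdot\mathcal E^p$.

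\textbf{Step 3: compute the class.} The equivariant class of an invariant linear subspace of a representation is the equivariant Euler class of the normal representation. Here the normal representation is $\bigoplus_{i=1}^k\Hom(\Sym^i\C^n,\C^p)$, which gives the product formula. Note that $\Hom(\Sym^i(\C^n),\C^p)$ in the statement should read $\Hom(\Sym^i\C^n,\C^p)$, a parenthesis typo.

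\textbf{Main obstacle.} The real work is Step 1: verifying that the Boardman iteration with maximal kernel $n$ at each stage imposes exactly the vanishing of the degree-$\le k$ Taylor part, without extra components. I will handle it through Mather's description via Jacobian extensions. $\Si^{n^k}$ means the ideal generated by $f$ together with its successive Jacobian extensions stays inside $\mathfrak m$ at every stage. By induction, $\Delta^j(f)\subset \mathfrak m$ for $j\le k$ forces $f\in\mathfrak m^{k+1}$. Conversely, $f\in\mathfrak m^{k+1}$ gives $\Delta^jf\subset\mathfrak m^{k+1-j}$, which settles the opposite inclusion.
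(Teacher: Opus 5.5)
Your proposal is correct and follows the paper's proof. The paper observes that $\Si^{n^k}(n,p)$ is the kernel of the linear projection $j^k:J^N_0(n,p)\to J^k_0(n,p)$, so its class is the Euler class of the image $\bigoplus_{i=1}^k\Hom(\Sym^i\C^n,\C^p)$; your Steps 1--2 spell out the identification of the Boardman locus with this subspace and its contact invariance, which the paper takes for granted (only note the slip of writing $J^k$ for the ambient jet space $J^{k'}$).
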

\begin{proof}
$\Si^{n^k}(n,p)\subset J_0^N(n,p)$ is the kernel of the projection  $j^k: J_0^N(n,p)\to J_0^k(n,p)$, implying that its cohomology class is the Euler class of the image. 
\end{proof}
We will focus on the case of one-dimensional source:
\begin{corollary}If $\al$ and $\be_1\stb\be_p$ denote the Chern roots of $A^1$ and $B^p$, i.e.\ $a_1=\al_1$ and $b_j=e_j(\be_1\stb \be_p)$ are the elementary symmetric polynomials, then
	\begin{equation}\label{Morin1dim-source}
		[A_k(1,l+1)]=\prod_{i=1}^{k}\prod_{j=1}^{l+1}(\beta_j-i\alpha).
	\end{equation}
\end{corollary}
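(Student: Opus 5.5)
The plan is to specialize Proposition \ref{prop:sigma-n-ad-k} to $n=1$ and $p=l+1$, and then compute the Euler class in Chern roots. For $n=1$ the Morin singularity $A_k$ is the Thom--Boardman class $\Si^{1^k}=\Si^{n^k}$ with $n=1$. The normal form $x\mapsto (x^{k+1},0,\dots,0)$ is exactly the locus of jets in $J_0^N(1,l+1)$ whose Taylor coefficients of degrees $1,\dots,k$ all vanish. I will first confirm this identification: a jet $f\in J^N_0(1,l+1)$ is contact equivalent to the normal form precisely when its lowest nonvanishing order is $k+1$. The closure of this orbit is therefore the kernel of $j^k$, exactly as in the proof of Proposition \ref{prop:sigma-n-ad-k}. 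Hence
\[
[A_k(1,l+1)]=\prod_{i=1}^k e\bigl(\Hom(\Sym^i(\C^1),\C^{l+1})\bigr).
\]

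Next I compute each factor equivariantly. The relevant torus is the maximal torus of $\GL_1\times\GL_{l+1}\subset\mathcal K$, which acts on the source with weight $\al$ and on the target with weights $\be_1,\dots,\be_{l+1}$. Since $\Sym^i(\C^1)$ is the line with weight $i\al$, the bundle $\Hom(\Sym^i(\C^1),\C^{l+1})$ splits into the lines $\Hom(\Sym^i\C^1,\C_{\be_j})$. These have weights $\be_j-i\al$ for $j=1,\dots,l+1$. The Euler class of a direct sum of lines is the product of their weights, which gives
\[
\prod_{i=1}^k\prod_{j=1}^{l+1}(\be_j-i\al).
\]
Finally I note that restricting $H^*_{\mathcal K}$ to the maximal torus is injective on the symmetric polynomials. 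Substituting $a_1=\al$ and $b_j=e_j(\be)$ then gives the stated formula \eqref{Morin1dim-source}.

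The main obstacle is the sign and weight convention, namely whether $\Hom(A,B)$ carries the weights $\be_j-i\al$ rather than $i\al-\be_j$. This has to be consistent with Remark \ref{rmk:HomAB}, where $\rho^{n\to p}(s_{n^p})=e(\Hom(A,B))$. I would fix it by checking the case $k=1$. There the formula should reduce to $\rho^{1\to l+1}(s_{1^{l+1}})=\prod_j(\be_j-\al)$, which is the top Chern class of $B-A$ when $A$ has rank one. This is consistent, and for larger $i$ the coefficient $i\al$ simply comes from the $i$-th symmetric power.
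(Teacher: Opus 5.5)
Your proposal is correct and follows the paper's route. You specialize Proposition \ref{prop:sigma-n-ad-k} to $n=1$, $p=l+1$ via the identification $A_k=\Si^{1^k}$, which you justify directly by observing that the orbit closure is $\ker j^k$, and you read off the Euler class from the weights $\be_j-i\al$ of $\Hom(\Sym^i\C^1,\C^{l+1})$ (the $k=1$ sign check against Remark \ref{rmk:HomAB} is consistent, though not really needed since the representation already fixes these weights).
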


For small $k$ and $l$ equation \eqref{Morin1dim-source} is enough to calculate the stable Thom polynomial $\Tp(A_k(l))$. Since $\rho^{1\to l+1}$ has no kernel below degree $2(l+2)$, we can calculate $A_2(l)$ and $A_3(0)$. For $\Tp(A_3(1))$ the coefficient of $s_{2,2,2}$ will not be determined from $[A_3(1,2)]$.

\begin{proposition} We have the following stable Thom polynomials.
	\begin{enumerate}[(i)]
		\item Ronga's theorem:
\[\Tp(A_2(l))=\sum_{i=0}^{l+1}2^is_{2^{l+1-i},1^{2i}}.\]
		\item 
		\[\Tp(A_3(0))=6s_{1,1,1}+5s_{2,1}+s_3\]
		\item \[ \Tp(A_3(1))=36s_{1^6}+30s_{2,1^4}+6s_{3,1,1,1}+19s_{2,2,1,1}+5s_{3,2,1}+s_{3,3}+vs_{2,2,2}, \]
		where the coefficient of $s_{2,2,2}$ is not determined by $[A_3(1,2)]$. In fact, it is known that $v=5$.
	\end{enumerate}
\end{proposition}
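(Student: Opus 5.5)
The plan is to use the one-dimensional source formula \eqref{Morin1dim-source} together with Observation \ref{obs:tptostable} and the factorization formula, Theorem \ref{thm:schur-factorization}. With $n=1$ and $p=l+1$, the class $[A_k(1,l+1)]$ is the product $\prod_{i=1}^k\prod_{j=1}^{l+1}(\beta_j-i\alpha)$. The Euler class $e(\Hom(A,B))=\rho^{1\to l+1}(s_{1^{l+1}})=\prod_j(\beta_j-\alpha)$ is exactly the $i=1$ factor. So
\[
[A_k(1,l+1)]=\rho^{1\to l+1}(s_{1^{l+1}})\cdot\prod_{i=2}^k\prod_{j=1}^{l+1}(\beta_j-i\alpha).
\]
For $n=1$, a nice partition has the form $\lambda=(1^{l+1}+\beta,1^{m})$, where $\alpha=1^m$ and $s_{\alpha^T}(a)=s_m(\alpha)=\alpha^m$. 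Theorem \ref{thm:schur-factorization}(ii) then gives
\[
\rho(s_\lambda)=(-1)^m\rho(s_{1^{l+1}})\,\alpha^m s_\beta(\beta_1\stb\beta_{l+1}).
\]
Hence it suffices to expand the cofactor $Q=\prod_{i\ge2}\prod_j(\beta_j-i\alpha)$ in the basis $\{(-\alpha)^m s_\beta(b)\}$. Each term $(-\alpha)^m s_\beta(b)$ is then replaced by $s_{(1^{l+1}+\beta,1^m)}$. By Theorem \ref{thm:schur-factorization}(iii) the thin coefficients vanish. By (i) the thick coefficients, meaning those $\lambda\supset 2^{l+2}$, are invisible, and their total degree is at least $2(l+2)$.

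For (i), $k=2$ and the cofactor is $\prod_j(\beta_j-2\alpha)=\sum_{i}(-2\alpha)^{i}e_{l+1-i}(b)$. Since $e_r(b)=s_{1^r}(b)$, the term with $(-\alpha)^i$ carries coefficient $2^i s_{1^{l+1-i}}(b)$. It corresponds to $\lambda=(2^{l+1-i},1^{i})$ with $\beta=1^{l+1-i}$ and $m=i$.

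At this point the exponent of $1$ looks wrong, since the claim has $1^{2i}$. I recheck the bookkeeping: $\lambda$ consists of the first $l+1$ rows, namely $1+\beta_j$, followed by $m$ extra rows of $1$. So $\lambda=(2^{l+1-i},1^{i},1^{i})=(2^{l+1-i},1^{2i})$, because the $i$ rows with $\beta_j=0$ also contribute $1$'s. This gives Ronga's formula. The degree is $2(l+1)<2(l+2)$, so no thick partitions occur and the formula is fully determined.

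For (ii) and (iii) I would carry out the same expansion of $\prod_j(\beta_j-2\alpha)(\beta_j-3\alpha)$ with $l=0$ and $l=1$. I expand into monomials $\alpha^m\cdot(\text{symmetric function of } b)$ and rewrite the $b$-part in Schur functions of at most $l+1$ variables. In the case $l=0$ the single factor is $(\beta-2\alpha)(\beta-3\alpha)=\beta^2-5\alpha\beta+6\alpha^2$. This gives $s_3+5s_{2,1}+6s_{1,1,1}$.

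For $l=1$ the degree is $6=2(l+2)$. So exactly one thick partition, $2^3=(2,2,2)$, lies in the kernel, and its coefficient $v$ cannot be recovered. The remaining coefficients come from expanding the product over two variables $\beta_1,\beta_2$. This is routine but the main bookkeeping obstacle: each $s_\beta(\beta_1,\beta_2)$ with $\ell(\beta)\le2$ must be correctly matched to $\lambda=(1+\beta_1,1+\beta_2,1^m)$.

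The value $v=5$ is not derivable this way. I would cite it from the literature, for instance the tables in \cite{tpp} or Rimányi's restriction equations. One could also verify it by restricting to $A_3(2,3)$, which is the first instance in which the $(2,2,2)$ term survives.
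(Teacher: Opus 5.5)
Your proposal is correct and is essentially the paper's proof. You split off the Euler class $\prod_j(\beta_j-\alpha)=\rho^{1\to l+1}(s_{1^{l+1}})$, expand the cofactor $\prod_{i\ge 2}\prod_j(\beta_j-i\alpha)$ in the basis $(-\alpha)^m s_\beta(b)$, translate each term to $s_{(1^{l+1}+\beta,1^m)}$ via the factorization formula, and use the thin/thick properties, with $(2,2,2)$ the only thick partition in degree $6$ for $(n,p)=(1,2)$. The one step you leave undone is the explicit two-variable expansion in (iii). The paper writes it out as $36\alpha^4-30s_1(\beta)\alpha^3+(6s_2(\beta)+19s_{1,1}(\beta))\alpha^2-5\alpha s_{2,1}(\beta)+s_{2,2}(\beta)$, and it yields exactly the stated coefficients.
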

\begin{proof}
	\emph{Case of $A_2(l)$:}
\[ [A_2(1,l+1)]=\prod_{j=1}^{l+1}(\beta_j-\alpha)\prod_{j=1}^{l+1}(\beta_j-2\alpha)=
\prod_{j=1}^{l+1}(\beta_j-\alpha)\sum_{i=0}^{l+1}(-1)^i2^i\alpha^ie_{l+1-i}(\beta_j), \]
where $e_{l+1-i}(\beta_j)$ denotes the $l+1-i$-th elementary symmetric polynomial of the $\beta_j$'s. Therefore, by the Factorization formula we obtain the formula of Ronga.	\smallskip

\emph{Case of $A_3(0)$}: 
\[ [A_3(1,1)]=(\beta-\alpha)(\beta-2\alpha)(\beta-3\alpha)=(\beta-\alpha)(6\alpha^2-5\alpha\beta+\beta^2),\]
and by the Factorization formula we obtain (ii).\smallskip

\emph{Case of $A_3(1)$}:
	\begin{equation*}\label{a3(1)}
		\begin{split}
			[A_3(1,2)]=(\beta_1-\alpha)(\beta_2-\alpha)(\beta_1-2\alpha)(\beta_2-2\alpha)(\beta_1-3\alpha)(\beta_2-3\alpha)=\\
			(\beta_1-\alpha)(\beta_2-\alpha)\big(36\alpha^4-30s_1(\beta)\alpha^3+(6s_2(\beta)+19s_{1,1}(\beta))\alpha^2-5\al s_{2,1}(\beta)+s_{2,2}(\beta)\big),
		\end{split}
	\end{equation*}
	where the $s_\lambda(\beta)$'s denote the Schur polynomials in the variables $\beta_1,\beta_2$, e.g.
	\[ s_2(\beta)=(\beta_1+\beta_2)^2-\beta_1\beta_2  \]
	and so on. Therefore, by the Factorization formula we obtain (iii) -- note that the coefficient of $s_{2,2,2}$ is not determined by the Factorization formula.

	The fact that  $[A_3(1,2)]$ is divisible by $(\beta_1-\alpha)(\beta_2-\alpha)$ implies that the coefficient of $s_6$ is zero by (iii) of the Factorization formula Theorem \ref{thm:schur-factorization}.
\end{proof}

\begin{example} For $A_4(1,1)$ we have
\[ [A_4(1,1)]=(\beta-\alpha)(\beta-2\alpha)(\beta-3\alpha)(\beta-4\alpha)=(\beta-\alpha)(-24\alpha^3+26\alpha^2\beta-9\alpha\beta^2+\beta^3),\]
therefore, by the Factorization formula we obtain
\[ \Tp(A_4(0))=24s_{1,1,1,1}+26s_{2,1,1}+9s_{3,1}+s_4+us_{2,2}, \]
where the coefficient of $s_{2,2}$ (which is $u=10$ \cite[Theorem 2.2]{Gaffney1983}) is not determined by $[A_4(1,1)]$.
\end{example}

\begin{remark} If we increase $l$ by one then the kernel of $\rho^{1\to l+1}$ starts in 2 degrees higher, and the degree of $\Tp(A_k(l))$ increases by $k$. This implies that for $k>2$ the one dimensional source space method leaves more and more coefficients of $\Tp(A_k(l))$ undetermined. For the case of negative relative codimension, we will explore the \quot{dual} notion, i.e.\ when the target space is one dimensional, which behaves very differently.
\end{remark}

\begin{remark} For higher dimensional source space we can see that the factorization formula immediately implies the Giambelli-Thom-Porteous formula.
\end{remark}

\section{Thom series in non-negative relative codimension }
\label{sec:p>n} 
A key goal of Thom polynomial theory is to establish closed formulas for families of  Thom polynomials. The classical example is that iterated trivial unfoldings of a singularity are the image of the same stable Thom polynomial (Theorem \ref{damon}). A new parameter can be introduced by the \emph{zero unfolding operator}. This leads to Thom series for $l\geq 0$ that we briefly recall.

\subsection{The zero unfolding}\label{sec:zerounfolding}
For  $f=(f_1,\dots,f_p)\in J^k(n,p)$ we define its zero unfolding as $\sigma_0(f):=(f_1,\dots,f_p,0)$. Zero unfolding doesn't change the quotient algebra, but increases the relative codimension $l=p-n$ by one. This means that given a singularity $\eta(l_0)$, there are  singularities $\eta(l)$ for $l\geq l_0$. Notice that this convention is consistent with the notation $A_i(l)$ introduced earlier. See Table \ref{fig:stabilizations} for a summary of stabilizations and the corresponding stable Thom polynomials.

\begin{table}
	\[
	\renewcommand{\arraystretch}{1.35}
	\begin{array}[t]{@{}l@{}}
		\makebox[0pt][l]{%
			\raisebox{2.2ex}[0pt][0pt]{%
				\(\scriptstyle\xrightarrow{\;\text{trivial unfolding}\;}\)
			}%
		}%
		\makebox[0pt][r]{%
			\raisebox{1.2ex}[0pt][0pt]{%
				\rotatebox[origin=lt]{-90}{%
					\(\scriptstyle\xrightarrow{\;\text{zero unfolding}\;}\)
				}%
			}%
			\hspace{0.5em}%
		}%
		\begin{array}[t]{c|ccc|c}
			& n=1 & n=2 & n=3 & \operatorname{Tp}(\ell)\\
			\hline
			\ell=0
			&
			x \mapsto x^3
			&
			(x,u) \mapsto (x^3,u)
			&
			(x,u,v) \mapsto (x^3,u,v)
			&
			s_2+2s_{1,1}
			\\[1.2em]
			
			\ell=1
			&
			x \mapsto (x^3,0)
			&
			(x,u) \mapsto (x^3,u,0)
			&
			(x,u,v) \mapsto (x^3,u,v,0)
			&
			s_{2,2}+2s_{2,1,1}+4s_{1^4}
			\\[1.2em]
			
			\ell=2
			&
			x \mapsto (x^3,0,0)
			&
			(x,u) \mapsto (x^3,u,0,0)
			&
			(x,u,v) \mapsto (x^3,u,v,0,0)
			&
	s_{2,2,2}+2s_{2,2,1,1}+4s_{2,1^4}+8s_{1^6}
		\end{array}
	\end{array}
	\]
	\caption{The positive codimension stabilizations of $A_2(1,1)$ (their $\mathcal{K}$-orbits) and their stable Thom polynomials.} \label{fig:stabilizations}
\end{table}
\subsection{The Thom series: $\ell\geq 0$} The Thom polynomials of the sequence of zero unfoldings of a given $\eta$ follow a pattern, they can be arranged into the so called Thom series.
\begin{examples}
	Here we list a few simpler examples of Thom series; stable Thom polynomials as $l$ varies. Giambelli-Thom-Porteous:
\[   \Tp(\Sigma^{i}(l))=s_{i^{i+l}},\]
\[  \Tp(A_2(l))=s_{2^{l+1}}+2s_{2^{l},1,1}+\cdots+2^{l+1}s_{1^{2l+2}},\]
\[    \Tp(III_{2,3}(l))=\sum_{i=0}^{l+1}2^{i+l}s_{3^{l+1-i},2^{i+1},1^i}.\]
Sources for these results are \cite{Ronga1972}, \cite{Ozturk}. For further Thom series computations see \cite{LascouxPragacz}, \cite{FeherRimanyi2012}, \cite{BercziSzenes}, \cite{Kazarian}.
\end{examples}

\begin{remark} The general rule is that if $\Tp(\eta(l+1))=\sum_{\lambda} e_\lambda s_\lambda$ then $\Tp(\eta(l))=\sum_{\lambda} e_\lambda s_{\lambda^\flat}$,
where

\[ \twocase{\lambda^\flat=}{0}{\lambda_1<\mu}{(\lambda_2,\dots)}{\lambda_1=\mu},\]
and $\mu$ is the dimension of the quotient algebra of $\eta$ (for this fact and for the definition of the quotient algebra see \cite{FeherRimanyi2012}). This shows that as $l$ increases the \quot{old coefficients} don't change but new coefficients appear. We know that $\Tp(\eta(\binom{\mu-1}{2}))$ contains all the information but the computation of such stable Thom polynomials is not straightforward. 
\end{remark}
\section{Thom series in negative relative codimension}
\label{sec:p<n}
The zero unfolding operator increases the codimension of the singularity by the dimension of the quotient algebra. Singularities in negative relative codimension have infinite dimensional quotient algebra (since it can be identified with the maximal ideal of the coordinate ring of the preimage of zero, which is positive dimensional now). Therefore the zero unfolding of a singularity with  negative relative codimension has no Thom polynomial. Therefore we need to study other families of singularities.

\subsection{The quadratic unfolding} \label{sec:quadratic-unfolding} If $\eta:\C^n\to\C$ is a function singularity then we can define its quadratic unfolding $\qu(\eta):=\eta+x_{n+1}^2:\C^{n+1}\to\C$. For repeated quadratic unfoldings of $\eta$ we use the notation $\eta(-r)$, where $-r$ is the relative codimension, so $r$ is a nonnegative number.

One possible justification for studying the quadratic unfolding is that it doesn't change the Tjurina algebra of the singularity (which coincides with the unfolding space), which is a complete invariant of given source dimensional function singularities\cite{MatherYau1982}. This is analogous to the nonnegative relative codimension case, where the quotient algebra is a complete invariant of given relative codimensional singularities and the zero unfolding does not change the quotient algebra. 
  
 Since the unfolding space doesn't change, quadratic unfolding increases the codimension---and the degree of the Thom polynomial---by one, i.e. for every function singularity $\eta$ there is a non negative integer $\gamma=\gamma(\eta)$ such that 
 \[\codim(\eta(-r))=r+\gamma.\]
If a singularity appears in an $m$-parameter modulus, then we will consider the closure of the union of the orbits to be $\eta$. For non-moduli $\eta$, $\gamma(\eta)$ is the Tyurina number \cite[p.\ 112]{GreuelLossenShustin}, traditionally given in the lower index of the notation, e.g.\ $\gamma(A_k)=k$. For an $m$-parameter modulus $\eta$, the lower index is not the same as $\gamma(\eta)$, the lower index is $\gamma(\eta)+m$.

For non-function singularities the replacement for quadratic unfolding is less clear. We explain what we know in Section \ref{sec:beyond}.

\begin{table}
	\[
	\renewcommand{\arraystretch}{1.35}
	\begin{array}[t]{@{}l@{}}
		\makebox[0pt][l]{%
			\raisebox{2.2ex}[0pt][0pt]{%
				\(\scriptstyle\xrightarrow{\;\text{trivial unfolding}\;}\)
			}%
		}%
		\makebox[0pt][r]{%
			\raisebox{1.2ex}[0pt][0pt]{%
				\rotatebox[origin=lt]{-90}{%
					\(\scriptstyle\xrightarrow{\;\text{quadratic unfolding}\;}\)
				}%
			}%
			\hspace{0.5em}%
		}%
		\begin{array}[t]{c|cc|c}
			& p=1 & p=2 & \Tp(\eta) \\
			\hline
			\ell=0
			&
			x \mapsto x^3
			&
			(x,u) \mapsto (x^3,u)
			&
s_2+	2s_{1,1}
			
			\\[1.2em]
			
			\ell=-1
			&
			(x,y) \mapsto x^3+y^2
			&
			(x,y, u) \mapsto (x^3+y^2,u)
			&
		2s_3+2s_{2,1}
			\\[1.2em]
			
			\ell=-2
			&
			(x,y,z) \mapsto x^3+y^2+z^2
			&
			(x,y,z,u) \mapsto (x^3+y^2+z^2,u)
			&
3s_{4}+		2s_{3,1}

		\end{array}
	\end{array}
	\]
	\caption{Stabilizations of $A_2(1,1)$ in negative codimension, and their stable Thom polynomials. (In Schur basis the formulas are smaller.)}\label{fig:neg_stabilizations}
\end{table}

\subsection{The Thom series: $\ell< 0$} The Thom polynomials of the sequence of the quadratic unfoldings of a given $\eta$ follow a pattern, which is different from the zero unfolding case. Before stating the general theorem, we give an example of a Thom series to illustrate the general phenomenon: recall that $A_3(0)=[x\mapsto x^4]$. For $r\geq 1$:
\[  \Tp(A_3(-r))=6s_{r+1,1,1}+6s_{r+1,2}+(6r+5)s_{r+2,1}+\frac{1}{2}(r+1)(3r+2)s_{r+3}  
\]
We can observe the following: the number of Schur polynomials doesn't change but the coefficients are no longer constants: they are polynomials in $r$. We discovered this by computing these Thom polynomials via restriction equations, see Section \ref{sec:restriction}. Later we realized that there are---somewhat hidden---earlier results, via Thom-Boardman classes \cite[\S 6]{FeherKomuves}.

\begin{theorem}\label{thm:poly4p=1} 
	For a stable function singularity $\eta$ the stable Thom polynomial $\Tp(\eta(-r))$ is a polynomial in $r$ in the following sense:
	
	\[ \Tp(\eta(-r))=\sum_{|\alpha|+i+1=\gamma} g_{\alpha,i}(r)s_{r+1+i,\alpha},\]
	where $i\geq 0$, $\al=(\al_2\stb \al_n)$ a partition and $g_{\alpha,i}(r)$ are polynomials in $r$ of degree $\leq i$. We call the right hand side the \emph{Thom series of $\eta$}.
	
	\begin{itemize}
		\item This formula is valid for $r\geq \ga-2$ (In this case, for all $(\al,i)$ in the indexing set, $(r+1+i,\al)$ is a partition). 
		\item For $r\leq\gamma -2$, the sum is meant to exclude the terms $g_{\al,i}(r)s_{r+1+i,\al}$ where $(r+1+i,\al)$ is not a partition (i.e.\ $r+1+i<\al_2$). With this convention, the equality also holds for $r=\gamma-3$.
		\item For $r<\gamma-3$, the equality holds modulo the kernel of $\rho^{r+1\to 1}$. In particular, for every nice partition $s_{r+1+i,\alpha}$ (i.e.\ $\al_2\leq r+1$) its coefficient is equal to $g_{\al,i}(r)$. 
	\end{itemize}

\end{theorem}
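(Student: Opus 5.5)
Since $p=1$, everything will pass through the unstable Thom polynomials $[\eta(n,1)]$, $n=r+1$, and Kazarian's structure theorem (Theorem~\ref{thm:kazarian}). The plan is to take the Legendre Thom polynomial $K_\eta$ and use Kazarian's theorem to write $[\eta(n,1)]=L_n(K_\eta)$ explicitly in the Chern roots $x_1,\dots,x_n$ of the source bundle $A$ and the single root $b$ of the target line bundle $B$. Then, exactly as in \S\ref{sec:1dimsource}, we divide by the Euler class $\rho^{n\to 1}(s_{n^1})=\prod_j(b-x_j)=e(\Hom(A,B))$.

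\textbf{Step 1: the factorization.} Setting up the factorization is the first step. By Damon's Theorem~\ref{damon}, $[\eta(n,1)]=\rho^{n\to1}(\Tp(\eta(1-n)))$. Since the source is already $n$-dimensional, the locus $\eta(n,1)$ sits inside $\Sigma^{n}(n,1)$, the locus of vanishing differential. Hence $[\eta(n,1)]$ is divisible by $e(\Hom(A,B))$, and we write
\[
[\eta(n,1)]=e(\Hom(A,B))\,Q_n(a,b).
\]
Nice partitions for $(n,1)$ have the form $(n+\beta_1,\alpha)$ with $\alpha_1\le n$, and by Theorem~\ref{thm:schur-factorization}(ii)
\[
\rho^{n\to1}(s_{n+i,\alpha})=(-1)^{|\alpha|}\,e(\Hom(A,B))\,s_{\alpha^T}(a)\,b^{i}.
\]
Parts (i) and (iii) then say that the nice coefficients of $\Tp$ are exactly the coefficients of $Q_n$ when $Q_n$ is expanded in the basis $s_{\alpha^T}(a)b^i$. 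The thin coefficients vanish, and the thick ones are invisible.

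\textbf{Step 2: degree bookkeeping.} The degree count gives $\deg Q_n=\gamma+r-n=\gamma-1$, which is independent of $r$. This explains the indexing $|\alpha|+i+1=\gamma$ and the finiteness of the support. Once $\alpha_1\le n$ holds automatically (i.e.\ $|\alpha|\le\gamma-1\le r+1$), no partition is thick, since a thick partition needs $\lambda_2\ge n+1$. So the equality of $\Tp$ with the Thom series becomes exact in the ranges stated in the theorem, and for smaller $r$ it holds only modulo $\ker\rho^{r+1\to1}$. I expect the edge cases $r=\gamma-2$ and $r=\gamma-3$ to come out of this same bookkeeping, since there the dropped terms are exactly the non-partitions.

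\textbf{Step 3: polynomiality in $r$ (the main obstacle).} It remains to show that the coefficient $g_{\alpha,i}$ of $s_{\alpha^T}(a)b^i$ in $Q_n$ is a polynomial in $n=r+1$ of degree $\le i$. Kazarian's theorem expresses $[\eta(n,1)]$ as $K_\eta$ evaluated at the Chern classes of the Legendre data: roughly $\Hom(A,B)$, together with a quadratic-form bundle $\Sym^2A^*\otimes B$ whose rank depends on $n$. The point of the plan is that $K_\eta$ is fixed and independent of $n$, so all the $n$-dependence enters only through these universal substitutions.

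I would expand the relevant Chern classes in the shifted variables $x_j-b$. The factor $e(\Hom(A,B))$ splits off, and each power $b^i$ that remains arises from expanding a symmetric function of the $n$ roots. For example, $\sum_j(b-x_j)^{k}$ produces $n\,b^k$ plus terms in $a$, and a monomial producing $b^i$ carries a multiplicity that is a binomial in $n$ of degree at most $i$. The coefficient of $s_{\alpha^T}(a)$ is therefore a sum over such expansions with coefficients in $\mathbb{Q}[n]$ of degree $\le i$. Making this precise means checking that every $b$-power introduced by stabilization costs at most one factor of $n$, as in $\Tp(A_3(-r))$, where the coefficient of $s_{r+3}$ ($i=2$) is quadratic.

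To organize this check I would prefer the $Q$-polynomial description of $L_n$ referenced in \S\ref{sec:QL}, under which each $\widetilde Q$-basis element maps to an explicit expression. I expect verifying the degree bound uniformly there to be the hardest part.
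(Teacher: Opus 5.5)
Your route is the paper's: Kazarian's theorem gives $[\eta(n,1)]=e(J^1(n))L_n(K_\eta)$. The $p=1$ factorization formula converts the basis $s_{\alpha^T}(a)u^i$ into $s_{n+i,\alpha}$, and polynomiality comes from the binomial structure of $L_n$ (Lemma~\ref{small-n=0}). The step you flag as the main obstacle, however, rests on a misidentified substitution.

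Kazarian's theorem does not evaluate $K_\eta$ on a bundle like $\Sym^2A^*\otimes B$. That bundle's Euler class is $\mathcal{L}_n(Q_{\delta_n})$, which matters only for the $Q$-factorization. If it entered the substitution, $c_1$ would already contain $\binom{n+1}{2}u$, and the bound $\deg g_{\alpha,i}\le i$ would fail. By Definition~\ref{def:Ln}, $L_n$ sends $c(K)$ to $c(\Hom(A,B)-A)$. Writing $1+u-x_j=(1-x_j)\bigl(1+u(1+z_j)\bigr)$ with $z_j=x_j/(1-x_j)$, this gives
\[
L_n(1+k_1+k_2+\cdots)=\prod_{j=1}^n\frac{1+u-x_j}{1+x_j}=\prod_{j=1}^n\frac{1-x_j}{1+x_j}\cdot\sum_{m\ge 0}u^m\sum_{k\le m}\binom{n-k}{m-k}e_k(z).
\]
Both $\prod_j\frac{1-x_j}{1+x_j}$ and $e_k(z)$ are universal symmetric functions, i.e.\ fixed expressions in the $a_i$ (with $a_j=0$ for $j>n$). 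So every factor of $n$ comes with at least one factor of $u$, which is exactly Lemma~\ref{small-n=0}. Multiplying out the monomials of the fixed polynomial $K_\eta$ preserves this bound. The Schur expansion of the $a$-part is $n$-independent, so you get $h_{\lambda,i}(n)$ of degree $\le i$ directly, with no $Q$-polynomial machinery.

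This uniform-in-$n$ expression is also what defines each $g_{\alpha,i}$ as a single polynomial. That is needed for the third bullet, since $s_{\alpha^T}(a)=0$ whenever $\alpha_1>n$, so coefficient-by-coefficient definitions would not cover those $n$.

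The edge case $r=\gamma-3$, which you leave as ``expected'', is a one-line check. An $(r+1,1)$-thick partition has size at least $2(r+2)>r+\gamma$, so $\ker\rho^{r+1\to1}$ vanishes in degree $r+\gamma$. The only index with $\alpha_1>n$ is $i=0$, $\alpha=(\gamma-1)$, giving $(r+1,r+2)$, which is not a partition. Correspondingly, $s_{1^{\gamma-1}}(a)=a_{\gamma-1}=0$ when $n=\gamma-2$.
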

We demonstrate the subtle points on validity of the theorem on a small example, and discuss further examples later.
\begin{example}
	We will see later (Theorem \ref{thm:thom_series}) that 
	\begin{align*}
		\Tp(A_4(-r))=&24s_{r+1,1,1,1}
		+36s_{r+1,2,1}
		+24s_{r+1,3}
		+(30r+26)s_{r+2,1,1}\\
		&+{\color{red}(30r+14)}s_{r+2,2}+(15r^2+20r+9)s_{r+3,1}+\frac{1}{2}(r+1)(5r^2+5r+2)s_{r+4},
	\end{align*}
	where the range of validity is described as follows. In this case, $\ga=4$. 
	
	By the first point of the theorem, the formula holds for $r\geq 2$. 
	
	The second point of the theorem states that for $r=1$, the formula holds if we omit the non-partition $s_{r+1,3}$. 
	
	Finally, by the third point of the theorem, for $r=0$ the equality holds up to the kernel of $\rho^{r+1,1}$, i.e.\  the coefficient of $s_{2,2}$ is not determined by the theorem (non-partitions are excluded by the second point). And indeed, $14$ is not the correct value for the coefficient of $s_{2,2}$ (in red), which can be shown to be 10.
	\end{example}
\begin{remark}\label{rmk:interpolate}
	Notice that by the first point of the theorem, the ``support'' of the Thom series is finite: for $r\geq \ga-2$, there is a fixed, finite number of partitions whose coefficients are nonzero. Indeed, as $r$ increases, the codimension of $\eta(-r)$ increases by one, and this dependence is covered by $s_{r+1,i,\al}$. Also, from a computational perspective, the knowledge of $i+1$ many values of $g_{\al,i}(r)$ allows one to interpolate the entire polynomial. 
	
	For small values of $r$, the theorem doesn't cover the remaining coefficients, and we will use different methods to compute these, see Section \ref{sec:restriction}.
\end{remark}
\section{From Legendre Thom polynomials to Thom polynomials}
\label{sec:ltp-to-tp}
For any stable contact function-singularity $\eta(n,1)$, there exists a Legendre version of Thom polynomials $K_\eta\in \Z[k_1,k_2\stb u]$, called the \emph{Legendre Thom polynomial} \cite{Vassiliev1993}, \cite{Kazarian1995}, \cite{Kazarian2003}. Kazarian described their relationship to the Thom polynomials $[\eta(n,1)]$ in a structure theorem \cite{Kazarian2003}, as we will now explain.
\subsection{The homomorphism $L_n$}
We first introduce the key algebraic operation that forms the basis of Kazarian's structure theorem.
\begin{definition}\label{def:Ln}
	Define the ring homomorphism
	\[
	L_n:\Z[k_1, k_2\stb u]\to \Z[a_1\stb a_n,u]
	\]
	by $L_n(u)=u$ and the formal power series
	\[
	L_n(1+k_1+k_2+\ldots)=\frac{(1+u)^n-(1+u)^{n-1}a_1\pm\ldots +(-1)^na_n}{1+a_1+\ldots+a_n}.
	\]
	whose graded terms are set to be equal ($\deg(k_i)=\deg(a_i)=i$, $\deg(u)=1$.)
\end{definition}
By its definition, $L_n$ has a nontrivial kernel, with the following relations between the $k_i$'s:
\[ \tag{R}
\left(1+k_{1}+k_{2}+\ldots\right)\left(1-\frac{k_{1}}{1+u}+\frac{k_{2}}{(1+u)^{2}}-\ldots\right)=1.\]
or written using the generating function $T(w)=1+k_1w+k_2w^2+\ldots$ with the formal variable $w$, the relation is \[T(w)T\left(\frac{-w}{1+uw}\right)=1.\] 
\begin{example}
	For $n=2$ we have
	\[1+k_1+k_2+\ldots=\frac{(1+u)^2-(1+u)a_1+a_2}{1+a_1+a_2}, \ \text{ i.e. } \  L_2(k_1)=2u-2a_1,\ L_2(k_2)=u^2-3ua_1+2a_1^2. \]
\end{example}
The definition and the polynomiality of the binomial coefficients immediately implies that
\begin{lemma} \label{small-n=0}
		$L_n(k_d)$ is a polynomial in $n$ in the following sense: There exist polynomials
\[	c_{I,q}(n)\in \mathbb Q[n],
	\qquad
	\deg_n c_{I,q}(n)\le q.
	\]
	such that
	\[
	L_n(k_d)=\sum_{|I|+q=d} c_{I,q}(n)\,a^Iu^q,
	\]
	where \(I=(I_1,I_2,\ldots)\), \(a^I=\prod_j a_j^{I_j}\), and
	\[
	|I|=\sum_{j\ge 1} jI_j,
	\]	
	with the convention that $a_j=0$ for $j>n$.	
\end{lemma}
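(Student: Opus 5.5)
We will expand the generating function in Definition \ref{def:Ln} directly and read off the $n$-dependence coefficient by coefficient. Write $A(t)=1+a_1t+a_2t^2+\cdots$, using the convention $a_j=0$ for $j>n$. The numerator is
\[
N=\sum_{j=0}^n(-1)^ja_j(1+u)^{n-j}=(1+u)^n\sum_{j\ge0}(-1)^ja_j(1+u)^{-j},
\]
and the $k_d$ are the graded pieces of $N/A(1)$. It is convenient to set
\[
B=\sum_{j\ge0}(-1)^ja_j(1+u)^{-j}\Big/\sum_{j\ge 0}a_j ,
\]
read as a formal power series in the graded variables $a_j$ and $u$ (with $(1+u)^{-j}$ expanded as a power series in $u$). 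Then $1+k_1+k_2+\cdots=(1+u)^nB$, and $B$ does not depend on $n$ except through the convention $a_j=0$ for $j>n$. That convention only sets some variables to zero, so it does not affect the form of the claim.

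Next we expand $(1+u)^n=\sum_{m\ge0}\binom{n}{m}u^m$. For fixed $m$, the coefficient $\binom{n}{m}$ is a polynomial in $n$ of degree $m$ with rational coefficients, and it is valid for all integers $n\ge 0$, including $m>n$, where it correctly vanishes. Write the graded piece of $B$ of degree $e$ as $\sum_{|I|+q'=e}\beta_{I,q'}a^Iu^{q'}$ with constants $\beta_{I,q'}\in\Z$. Multiplying the two series, the degree-$d$ piece is
\[
L_n(k_d)=\sum_{|I|+q=d}\Big(\sum_{m+q'=q}\binom{n}{m}\beta_{I,q'}\Big)a^Iu^q .
\]
This gives the required form with
\[
c_{I,q}(n)=\sum_{m=0}^{q}\binom nm\beta_{I,q-m},
\]
which lies in $\mathbb Q[n]$ and has degree at most $q$, because $m\le q$. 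This proves the lemma.

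The only point needing care is that the expansion is legitimate uniformly in $n$: the binomial identity $(1+u)^n=\sum_m\binom nm u^m$ must hold as a formal power series for every $n$, and $B$ must be a well-defined, $n$-independent formal series. Both hold because $1/A(1)$ and $(1+u)^{-j}$ are invertible power series in positive-degree variables, so each graded piece involves only finitely many terms.
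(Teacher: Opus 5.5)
Your proof is correct and takes essentially the same approach as the paper, which simply states that the lemma follows from the definition of $L_n$ and the polynomiality of binomial coefficients. Factoring out $(1+u)^n$ against an $n$-independent integral cofactor $B$ and expanding $\binom{n}{m}$ supplies exactly the details the paper leaves implicit.
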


For example  
\begin{equation} \label{eq:L_n(k_1)}
	L_n(k_1)=-2a_1+nu,\quad  \text{ and }\quad  L_n(k_2)=2a_1^2-(2n-1)a_1u+\binom n2 u^2.
\end{equation}

\begin{remark} In fact it is not difficult to show that $\deg_n c_{I,q}(n)=q$ if $I=(0,0,\ldots)$, or $I_j\neq 0$ for some odd $j$, otherwise $\deg_n c_{I,q}(n)=q-1$. For instance, in the example above, the coefficient of $a_2$ in $L_n(k_2)$ vanishes, i.e.\ $c_{(0,1),0}(n)=0$ which has degree $-1$.
\end{remark}

\subsection{Kazarian's structure theorem}
\begin{theorem}[Kazarian \cite{Kazarian2003}]\label{thm:kazarian} For any stable contact function-singularity $\eta(n,1)$ we can assign a polynomial $K_\eta\in\Z[k_1,k_2,\dots,u]/(R)$, such that
	\[[\eta(n,1)]=e(J^1(n))L_n(K_\eta),\]
where $L_n$ and (R) are defined in Definition \ref{def:Ln}. Here $J^1(n)=\Hom(\C^n,\C)$ and $e(J^1(n))$ denotes the $\GL(n)\times\GL(1)$-equivariant Euler class
   	\[ e(J^1(n))=\sum_{i=0}^{n}(-1)^ia_iu^{n-i},\]
   	and $u=b_1$.
\end{theorem}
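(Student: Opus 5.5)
The plan is to factor the Thom polynomial $[\eta(n,1)]$ in two stages. First I split off the Euler class of the $1$-jet part. Then I identify what remains as the pullback of a single universal class, $K_\eta$, living on a space of Legendre (equivalently, Lagrangian-with-conformal-twist) jets. I work equivariantly throughout with $G=\GL(V)\times\GL(U)$, where $V=\C^n$ has Chern roots $x_1,\dots,x_n$ (so $a_i=e_i(x)$) and $U=\C$ has $c_1(U)=u=b_1$.

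\textbf{Step 1 (Euler class factor).} A stable function singularity $\eta(n,1)$ has vanishing differential at the origin. So $\eta\subset J^k_0(n,1)$ lies in the kernel of the $G$-equivariant linear projection $J^k_0(n,1)\to J^1(n)=\Hom(V,U)$. As in the proof of Proposition \ref{prop:sigma-n-ad-k}, this gives
\[
[\eta\subset J^k_0(n,1)]=e(J^1(n))\cdot[\eta\subset \mathfrak m^2/\mathfrak m^{k+1}\otimes U],
\]
where $e(J^1(n))=\prod_i(u-x_i)=\sum_i(-1)^ia_iu^{n-i}$. It therefore remains to show that the class of $\eta$ inside jets with vanishing $1$-jet equals $L_n(K_\eta)$ for a universal $K_\eta$.

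\textbf{Step 2 (passing to Legendre jets).} A germ $f$ with $df(0)=0$ determines a germ of the Lagrangian submanifold $\Gamma_{df}\subset V\oplus V^*\otimes U$. Here the target is a symplectic vector space whose form takes values in $U$, so its structure group is the conformal symplectic group $\mathrm{CSp}$. Adding the value of $f$ as the contact coordinate gives a Legendre germ. The assignment $f\mapsto$ (Legendre jet) is $G$-equivariant along the inclusion $G\hookrightarrow \mathrm{CSp}(V\oplus V^*\otimes U)$, $(g,t)\mapsto (g,(g^{-1})^{*}\otimes t)$.

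The Legendre singularity classes are $\mathrm{CSp}$-invariant subsets of the Legendre jet space. The space of function jets maps onto an open dense ($\mathrm{CSp}$-saturated) part of it, namely the Legendre germs transverse to the fiber $V^*\otimes U$. The key technical claim is that this map is a submersion onto that part and that $\eta$ is the preimage of the corresponding Legendre class $\eta_L$. The fibers consist of adding constants, and different generating functions of the same Legendre germ are contact-equivalent. Granting this, $[\eta]$ is the restriction along $BG\to B\mathrm{CSp}$ of $K_\eta:=[\eta_L]\in H^*_{\mathrm{CSp}}$.

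\textbf{Step 3 (the cohomology ring and the homomorphism $L_n$).} $H^*(B\mathrm{CSp})$ is generated by $u$ and the Chern classes of the tautological Lagrangian subbundle $\Lambda$. Write $k=c(\text{twisted normal/tangent difference})$; the symplectic splitting $\Lambda\oplus\Lambda^*\otimes U\cong$ ambient (whose class restricts trivially in the stable sense) forces exactly the relation $T(w)T(-w/(1+uw))=1$, i.e.\ (R). Restricting to $G$, the relevant class becomes
\[
\frac{c(V^*\otimes U)}{c(V)}=\frac{\prod_i(1+u-x_i)}{\prod_i(1+x_i)}=\frac{\sum_i(-1)^i(1+u)^{n-i}a_i}{1+a_1+\dots+a_n},
\]
which is precisely $L_n(1+k_1+k_2+\dots)$. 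The coefficients $k_i$ are built from these Chern classes (with $\deg k_i=i$). To justify independence of $n$, I would check that the quadratic unfolding $f\mapsto f+x_{n+1}^2$ corresponds to a symplectic stabilization that does not change the Legendre germ class. Hence $K_\eta$ depends only on $\eta$, and taking $n$ large identifies $H^*_{\mathrm{CSp}}$ with $\Z[k_1,k_2,\dots,u]/(R)$.

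\textbf{Expected main obstacle.} I expect the main obstacle to be Step 2: showing that the generating-function map is transverse to (indeed a submersion near) every Legendre class, so that the pullback of fundamental classes is the fundamental class. I would first try to prove this via versality. Transversal unfoldings of $f$ in the function-jet space realize all nearby Lagrangian jets, by the standard Arnold–H\"ormander correspondence between generating families and Lagrangian germs. Then equivariant Thom transversality finishes the argument.
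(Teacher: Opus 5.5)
The paper gives no proof of this statement; it is quoted from \cite{Kazarian2003}. Judged on its own, your proposal has a genuine gap, and it is not where you locate it. Step 1 is fine. Step 2 works once the Legendre classes are taken relative to the right fibration, namely the one with fibres parallel to $V$ (relative to the fibres $V^*\otimes U$ every $1$-graph is nonsingular). Its transversality issue is vacuous. The map $f\mapsto$ the $1$-graph of $f$ identifies $\mathfrak m^2$-jets with the open set of Legendre jets through $0$ whose tangent plane is transverse to $V^*\otimes U$. This is an open embedding; there are no constants to add, since $f(0)=0$. So $[\eta]$ is simply the restriction of $[\eta_L]$ to a $G$-equivariantly contractible chart, i.e.\ its value at the $1$-graph of $0$, where $T=V$. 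This turns $c(K)$, $K=V^*\otimes U-T$, into $c(V^*\otimes U)/c(V)=L_n(1+k_1+k_2+\cdots)$. The real gap is in Step 3: nothing in your argument shows that $[\eta_L]$ is the image of a single element of $\Z[k_1,k_2,\dots,u]/(R)$, the same for all $n$.

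\textbf{Why Step 3 fails.} Legendre singularity classes are invariant only under the stabilizer $P\simeq\GL(V)\times\GL(U)$ of the vertical Lagrangian, not under $\mathrm{CSp}$. So a priori $[\eta_L]\in H^*_{\GL(V)\times\GL(U)}(\Lambda)$, a ring in which $c(V)$, $c(T)$ and $u$ enter independently. Your identification of $H^*_{\mathrm{CSp}}$ with $\Z[k_1,k_2,\dots,u]/(R)$ is false. First, $B\mathrm{CSp}$ carries no tautological Lagrangian subbundle. Second, $H^2(B\mathrm{CSp};\Q)$ is spanned by $u$, so no class pulled back from it can restrict to $L_n(K_{A_2})=nu-2a_1$. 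Even $H^*_{\mathrm{CSp}}(\Lambda)=\Z[c(T),u]$ is strictly larger than the subring generated by the $c_i(T^*\otimes U-T)$ and $u$.

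That $[\eta_L]$ is a universal polynomial in the $c_i(K)$ and $u$ is the Legendre analogue of Damon's theorem, i.e.\ the substance of Kazarian's result. Compatibility with $f\mapsto f+x_{n+1}^2$ cannot replace it. The new variable is forced to have weight $u/2$, so in the variables $\alpha_i-u/2$ ($\alpha_i$ the Chern roots of $V$) compatibility only says that the classes are stable under adjoining a zero root. The family $e_2(\alpha_1-u/2,\dots,\alpha_n-u/2)$ has this property but is not in the image of $L_n$. By Proposition \ref{prop:inQ} that image consists of $Q$-polynomials in these variables, with coefficients polynomial in $u$, and these satisfy $P(y,-y,z_3,\dots)=P(z_3,\dots)$. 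By contrast, $e_2(y,-y,z_3,\dots)=e_2(z_3,\dots)-y^2$.

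\textbf{What is missing} is exactly this cancellation property. One way to obtain it is the hyperbolic unfolding $g\mapsto g+x_{n+1}x_{n+2}$, whose symmetry has weights $s$ and $u-s$ with $s$ free (Lemma \ref{lemma:weight_of_Q1}(ii)). The tangent space to the $\mathcal K$-orbit of $g+x_{n+1}x_{n+2}$ contains $\mathfrak m\cdot(x_{n+1},x_{n+2})$, which together with the functions of $x_1,\dots,x_n$ spans $\mathfrak m^2$. A slice argument then gives $P_{n+2}(\alpha_1,\dots,\alpha_n,s,u-s;u)=P_n(\alpha_1,\dots,\alpha_n;u)$ for the classes $P_n$ of $\eta$ in $\mathfrak m^2$. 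This is the cancellation in the pair $\pm(s-u/2)$. Combined with the characterization of $Q$-functions among symmetric functions by this property, it yields the theorem rationally; integrality needs separate care.
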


Here $K_\eta$ is the \emph{Legendre Thom polynomial} of $\eta$. As Kazarian points out, every polynomial in the $k_i$ and $u$ variables is equivalent modulo $(R)$ to one which is square-free in the $k_i$'s, so we can look for these polynomials $K_\eta$ in a square free form.

 Kazarian calculated the polynomials $K_\eta$ for function singularities of low codimension in \cite[Table 1]{Kazarian2003}. Later he calculated more examples which were published in \cite[\S 11]{MikoszPragaczWeber2011}. 
\begin{example}
For example $K_{A_1}=1, \ K_{A_2}=k_1,\ K_{A_3}=3k_2+uk_1$. Applying the theorem we obtain for example that
\[[A_3(2,1)]=(u^2-ua_1+a_2)(5u^2-11ua_1+6a_1^2).\]	
\end{example}

Based on Kazarian's structure theorem, we now turn to the proof of the main Theorem \ref{thm:poly4p=1}.

\subsection{Proof of Theorem \ref{thm:poly4p=1}}
\begin{proposition}\label{prop:poly} For any stable contact function singularity $\eta\in J(n,1)$ there exist polynomials $g_{\al,i}(n)$ of degree $\leq i$ such that for all $n\geq 1$
	\begin{equation}\label{eq:etan1}
		[\eta(n,1)]=\rho^{n\to 1}\left(\sum_{\substack{|\al|+i+1=\ga\\ i\geq0}} g_{\alpha,i}(n)s_{n+i,\alpha}\right),
	\end{equation}
	where $\al$ are partitions, $\ga=\ga(\eta)$ and we use the convention that $s_{n+i,\alpha}=0$ if $(n+i,\alpha)$ is not a partition (we do not apply the straightening law!).
\end{proposition}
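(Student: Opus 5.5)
Start from Kazarian's structure theorem (Theorem \ref{thm:kazarian}): $[\eta(n,1)]=e(J^1(n))L_n(K_\eta)$, where $K_\eta$ is independent of $n$ and homogeneous of degree $\ga-1$, since $\deg[\eta(n,1)]=\codim\eta(n,1)=(n-1)+\ga$ and $\deg e(J^1(n))=n$. Write $K_\eta=\sum c_{J,q}k^Ju^q$ with constant coefficients. By Lemma \ref{small-n=0}, $L_n(K_\eta)$ is then a finite sum $\sum_{|I|+q=\ga-1} h_{I,q}(n)a^Iu^q$, where each $h_{I,q}$ is a polynomial in $n$ of degree at most $q$. This is because $L_n$ is a ring homomorphism, $\deg_n$ is subadditive under products, and the $u$-degree adds: a factor $u^{q_0}$ contributes degree $0$ in $n$, and each $k_d$ contributes monomials $a^{I}u^{q'}$ with coefficients of degree at most $q'$ in $n$.

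Next, pass to Schur polynomials. By Remark \ref{rmk:HomAB}, with $p=1$ and $B=\C$ with $b_1=u$, we have $e(J^1(n))=e(\Hom(A,B))=\rho^{n\to 1}(s_{n})$. The nice property, Theorem \ref{thm:schur-factorization}(ii), then gives, for $\la=(n+i,\al)$ with $\al_1\le n$,
\[\rho^{n\to1}(s_{n+i,\al})=(-1)^{|\al|}e(J^1(n))\,s_{\al^T}(a)u^i .\]
So it suffices to rewrite $L_n(K_\eta)$ in the basis $s_{\al^T}(a)u^i$, i.e.\ to expand each $a^I$ (a product of elementary symmetric polynomials in the Chern roots of $A$) in Schur polynomials $s_{\mu}(a)$. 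This change of basis (Kostka-type numbers) is independent of $n$ as long as we do not reduce via $a_j=0$ for $j>n$. Setting $g_{\al,i}(n)=(-1)^{|\al|}\sum_I K_{I,\al^T}h_{I,i}(n)$ then gives polynomials of degree $\le i$, with $|\al|+i=\ga-1$, as required.

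The main obstacle is the bookkeeping for small $n$, where $a_j=0$ for $j>n$ and the factorization requires $\al_1\le n$. The plan is to perform the entire computation in $n$-independent symbols, i.e.\ $a_j$ free and $s_\mu(a)$ the universal Schur function, and to note that specializing to rank $n$ kills exactly those $s_{\al^T}(a)$ with $\ell(\al^T)=\al_1>n$. Those are precisely the terms where $(n+i,\al)$ fails to be a partition whenever $\al_1>n+i$; when $n<\al_1\le n+i$ the term is thick for $\rho^{n\to1}$ and lies in the kernel by Theorem \ref{thm:schur-factorization}(i). In both cases the term contributes zero on each side, which is consistent with the convention of not applying the straightening law. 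One also has to check that Lemma \ref{small-n=0} provides a single polynomial valid for all $n\ge1$, rather than only for large $n$. I would verify this by writing $(1+u)^{n-j}$ with binomial coefficients $\binom{n-j}{m}$, which are polynomial in $n$ of degree $m$ for all $n\ge1$ once terms with $a_j$, $j>n$, are set to zero.
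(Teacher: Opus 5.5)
Your proposal is correct and follows essentially the same route as the paper: Kazarian's structure theorem, polynomiality in $n$ of $L_n(K_\eta)$ via Lemma \ref{small-n=0}, expansion in the basis $s_{\al^T}(a)u^i$, and the $p=1$ factorization formula, giving $g_{\al,i}(n)=(-1)^{|\al|}h_{\al^T,i}(n)$. You also treat the small-$n$ terms explicitly, which the paper's proof leaves implicit: terms with $n<\al_1\le n+i$ are thick and killed by $\rho^{n\to 1}$, and terms with $\al_1>n+i$ vanish by the stated convention.
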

\begin{proof}
Lemma \ref{small-n=0} implies that the classes $L_n(K_\eta)$ are  polynomials in $n$:
\[
 L_n(K_\eta)=\sum_{|\la|+i+1=\ga} h_{\lambda,i}(n)s_{\lambda}(a)u^i,
\]
where $h_{\lambda,i}(n)$ is a polynomial in $n$ of degree  $\leq i$. The equation is valid for all $n\geq 1$ since $s_{\lambda}(a)=0$ if $l(\lambda)>n$. By Kazarian's theorem, we have
\[[\eta(n,1)]=e(J^1(n))\left(\sum_{|\la|+i+1=\ga} h_{\lambda,i}(n)s_{\lambda}(a)u^i\right).\]
\begin{figure}
\begin{tikzpicture}[scale=0.55, every node/.style={font=\small}]
	\def\N{6} 
	\def\I{3} 
	\pgfmathtruncatemacro{\Top}{\N+\I}
	
	\def\rowA{\Top}
	\def\rowB{5}
	\def\rowC{4}
	\def\rowD{2}
	\def\rowE{1}
	
	\draw[thick]
	(0,0) --
	(\rowA,0) --
	(\rowA,-1) --
	(\rowB,-1) --
	(\rowB,-2) --
	(\rowC,-2) --
	(\rowC,-3) --
	(\rowD,-3) --
	(\rowD,-4) --
	(\rowE,-4) --
	(\rowE,-5) --
	(0,-5) --
	cycle;
	\draw[thick] (0,-1) -- (\Top,-1);
	\draw[thick] (\N,0) -- (\N,-1);
	
	\draw[decorate, decoration={brace, amplitude=5pt}]
	(0,0.25) -- (\N,0.25)
	node[midway, above=6pt] {$n$};
	
	\draw[decorate, decoration={brace, amplitude=5pt}]
	(\N,0.25) -- (\Top,0.25)
	node[midway, above=6pt] {$i$};
	
	\node at (2.0,-2.0) {$\lambda^T=\alpha$};
	
\end{tikzpicture}
\caption{If $\mu=(n+i,\al_2,\stb \al_q)$, then $\rho^{n\to 1}(s_\mu)=(-1)^{|\al|}e(J^1(n))s_{\la}(a)u^i$}\label{fig:factorization_n1}
\end{figure}
Noticing that for $A=\C^n$ and $B=\C$ we have $J^1(\C^n)=\Hom(A,B)$, by Remark \ref{rmk:HomAB} we can apply the Factorization Formula (Theorem  \ref{thm:schur-factorization}) for $p=1$ and obtain \eqref{eq:etan1} with the following conventions (see Figure \ref{fig:factorization_n1}): \[\la^T=\al=(\al_2\stb \al_q),\] 
i.e.\ $\al$ is the conjugate partition of $\la$, reindexed to start with $\alpha_2$, and $g_{\alpha,i}(n)=(-1)^{|\alpha|}h_{\lambda,i}(n)$.
\end{proof}
\begin{proof}[Proof of Theorem \ref{thm:poly4p=1}]
Write $n=r+1$. First, observe that in \eqref{eq:etan1} only partitions $\al$ appear with $\al_2\leq \ga-1$. So if $r+1\geq \ga-1$, then $(r+1+i,\al)$ is a partition. By Theorem \ref{damon} 
\[ \rho^{n\to 1}\Tp(\eta(1-n))=[\eta(n,1)].\]
Theorem \ref{thm:kazarian} shows that $[\eta(n,1)]$ is divisible by the Euler class, so by  Theorem \ref{thm:schur-factorization} only the thick coefficients  of $\Tp(\eta(-r))$ are undetermined. We claim that if $r\geq \ga -3$, then no $(n,1)$-thick partitions appear in \eqref{eq:etan1}. Indeed, a partition $(n+i,\alpha)$ is thick if and only if $\alpha_2\geq n+1=r+2$, and the partitions appearing in \eqref{eq:etan1} satisfy $\ga-1\geq \al_2$.
For $r\geq \ga-3$, we have
\[
\al_2\leq \ga-1\leq r+2,
\]
so thick partitions can only appear if $\al_2=\ga-1=r+2$. However, in Proposition \ref{prop:poly} the sum goes through $|\al|+i+1=\ga$, which implies $i=0$. The corresponding summand is indexed by $(n+i,\al)=(\ga-2,\ga-1)$, which is not a partition and therefore zero by the convention used in Proposition \ref{prop:poly}. This finishes the proof of Theorem \ref{thm:poly4p=1}. 
\end{proof}
The  coefficient polynomials can be explicitly given. Instead of a complicated formula we demonstrate this on the case of $A_3$:
\begin{example}  Using that $K_{A_3}=3k_2+uk_1$ and \eqref{eq:L_n(k_1)} we obtain
	
	\[\begin{aligned}
		L_n(K_{A_3})
		&=3L_n(k_2)+uL_n(k_1)\\
		&=3\left(2a_1^2-(2n-1)a_1u+\binom n2u^2\right)
		+u(-2a_1+nu)\\
		&=6a_1^2+(1-6n)a_1u+\frac{n(3n-1)}2u^2.
   	\end{aligned}\]
	
	Using that $a_1^2=s_2(a)+s_{1,1}(a)$ we get
	
	\[L_n(K_{A_3})
	=
	6s_2(a)+6s_{1,1}(a)+(1-6n)s_1(a)u+\frac{n(3n-1)}2u^2,\]
	
	and, applying the Schur Factorization Theorem \ref{thm:schur-factorization} we obtain
	
	\[
	\Tp(A_3(-r))
	=
	6s_{r+1,1,1}
	+
	6s_{r+1,2}
	+
	(6r+5)s_{r+2,1}
	+
	\frac{(r+1)(3r+2)}2s_{r+3},
	\]
	
	since $r=n-1$. In this case $\gamma-3=0$, so the formula is valid for all $r$.
\end{example}
\begin{remark} Notice the difference between the zero and the quadratic unfolding. As we explained in Section \ref{sec:1dimsource} the restriction $\rho^{1\to p}$ loses more and more information of $\Tp(A_k(p-1))$ for $k>2$ as $p$ increases: $\rho^{1\to p}$ is injective in degrees $<(1+1)(p+1)$ and the degree of $\Tp(A_k(p-1))$ is $pk$. 
	
	On the other hand, the restriction $\rho^{n\to1}$ is injective in degrees $<(n+1)(1+1)$, which increases by 2 when $n$ is increased by 1. In contrast, the degree of $\Tp(\eta(1-n))$ increases only by 1. Hence, for sufficiently large n, the class $[\eta(n,1)]$ determines $\Tp(\eta(1-n))$.
 \end{remark}

\section{The relationship between the three types of Thom polynomials}
\label{sec:thompolynomialsconnection}

Summarizing, given a function singularity $\eta$ we can assign several polynomials to it: the sequence of Thom polynomials $[\eta(n,1)]$ defined in \eqref{eq:TP}, the sequence of stable Thom polynomials $\Tp(\eta(1-n))$ (Theorem \ref{damon}) and the Legendre Thom polynomial $K_\eta$ (Theorem \ref{thm:kazarian}). The content of this section is to describe the relationship between these classes, see Figure \ref{fig:tprelations}.\smallskip

The relationship is the following:
\begin{enumerate} 
	\item $K_\eta$ determines $[\eta(n,1)]$ for all $n$ by Kazarian's Theorem \ref{thm:kazarian}. 
	\item The knowledge of $[\eta(n,1)]$ for some $n$ with $\ga(\eta)-1<\binom{n+2}{2}$ determines $K_\eta$, by Corollary \ref{l-factorization}, see the discussion below.
	\item The stable Thom polynomial $\Tp(\eta(1-n))$ determines $[\eta(n,1)]$, since 
	\[ \rho^{n\to 1}\Tp(\eta(1-n))=[\eta(n,1)],\]
	by Damon's Theorem \ref{damon}.
	\item  $[\eta(n,1)]$ determines $\Tp(\eta(1-n))$ if $n\geq \gamma(\eta)-2$. More generally, $[\eta(n+i,1+i)]$ determines $\Tp(\eta(1-n))$, if $n+\ga(\eta)\leq (n+i+1)(i+2)$.
\end{enumerate}

By the discussion in Section \ref{sec:factorization}, even if the condition $n\geq \ga(\eta)-2$  is not satisfied, the knowledge of $[\eta(n,1)]$ determines a subset of the coefficients of $\Tp(\eta(1-n))$: the $(n,1)$-nice Schur coefficients of $\Tp(1-n)$, while the thick coefficients are not determined.

A similar statement holds for $K_\eta$: even if the condition $\ga(\eta)-1<\binom{n+2}{2}$  is not satisfied, the knowledge of $[\eta(n,1)]$ determines a \emph{subspace} of the coefficients of $K_\eta$. Even more is true: in the basis of $Q$-polynomials, $[\eta(n,1)]$ determines a \emph{subset} of the coefficients of $K_\eta$, namely the $n$-nice $Q$-coefficients of $K_\eta$, while the thick coefficients are not determined. This in particular implies (2), since the lowest degree $n$-thick class appears in degree $\binom{n+2}{2}$. See Section \ref{sec:thm:q-factorization-formula} for the definitions, and Corollary \ref{l-factorization} for this statement.\smallskip

The Thom series (Theorem \ref{thm:poly4p=1}) and the Legendre Thom polynomial $K_\eta$ contain the same information: the Thom series was defined from $K_\eta$ in the proof of Proposition \ref{prop:poly}, and conversely, the Thom series determines $[\eta(n,1)]$ (Proposition \ref{prop:poly}) which for large enough $n$ determines $K_\eta$.\smallskip

\begin{figure}
	\centering
	\begin{tikzpicture}[>=stealth]
		\def\d{5.2}
		\node (A) at (0,0) {$\Tp(\eta(1-n))$};
		\node (B) at (\d,0) {$[\eta(n,1)]$};
		\node (C) at (2*\d,0) {$K_\eta$};
		
		\draw[->, bend left=25] (A) to[looseness=0.7]  node[above] {\tiny $\rho^{n\to1}$ (Thm.~\ref{damon})} (B);
		\draw[->, bend left=25] (B) to[looseness=0.7]  node[below,align=center] {\tiny Obs. \ref{obs:tptostable}\\\tiny if $n\geq \ga(\eta)-2$} (A);
		
		\draw[<-, bend left=25] (B) to[looseness=0.7] node[above] {\tiny $L_n$ (Thm.~\ref{thm:kazarian})} (C);
		\draw[<-, bend left=25] (C) to[looseness=0.7]  node[below] {\tiny if $\ga(\eta)-1<\binom{n+2}{2}$} (B);
	\end{tikzpicture}
	\caption{The relationship between the stable Thom polynomials, Thom polynomials and Legendre Thom polynomials.}
	\label{fig:tprelations}
\end{figure}

The relationship above (in particular (4)) also implies that even if $K_\eta$ is known, the stable Thom polynomial $\Tp(\eta(1-n))$ is not determined for small $n$. For several such cases, we calculated the stable Thom polynomials using the restriction equation method of Rim\'anyi. We will discuss these calculations in Section \ref{sec:restriction}.

\part{Computations}

\section{Singularity types in negative relative codimension}\label{sec:singularitytypes}
In this section we discuss the singularity types whose Thom polynomials we will consider. The types of singularities are discussed here, and the computations are carried out in the rest of the paper.

\subsection{Singularities given by a representative}
We are interested in the Thom series of contact singularities $\eta(-r)$ in negative codimension. Since contact singularities are $\mathcal{K}$-invariant subvarieties, the simplest ones are (closures of) $\mathcal{K}$-orbits. See Table \ref{fig:neg_singularities} for a list of these contact singularities up to codimension 7 (and some beyond). The $p=1$ cases are the function singularities whose classification up to contact equivalence is known up to codimension 10 \cite{Arnold1976}, \cite[p.\ 24]{AGLV}, \cite{AGZV}. 

The notation for the function singularities in Table \ref{fig:neg_singularities} is that 
\[
\codim \eta(-r)=\gamma(\eta)+r,
\]
for instance $\codim A_3(-4)=7$ (see Section \ref{sec:quadratic-unfolding}). For each stable singularity $\eta(-r)$, there is a smallest $r$ for which $\eta$ appears inside $J^k(n, n-r)$. Only the $A_k$ function singularities start from $r=0$. For other function singularities $r$ is at least 1, with $P_8, P_9$ starting from $r=2$ (note that these orbits appear in moduli). The stable Thom polynomials of these singularities for $\ell=-1$ have been computed up to codimension 7, \cite{Ando1996}, \cite{Ohmoto2009}. \\

We will compute the Thom series of these singularities in Theorem \ref{thm:thom_series}, based on Theorem \ref{thm:kazarian} and Observation \ref{obs:tptostable}. Note that for small $n$, Kazarian's theorem is not sufficient to determine the entire stable Thom polynomial $\Tp(\eta(1-n))$. To this end, we will use restriction equations, see Sections \ref{sec:small_r} and \ref{sec:restriction}.\smallskip

Non-function contact singularities also appear already for $\ell=-1$, the first one is $S_5(-1)$ in codimension 6. However the stabilization operation for non-function singularities is less clear, and we will not consider Thom series of such singularities in this paper. We explain what we know in Section \ref{sec:beyond}. 

\begin{table}[ht]
	\centering
	
	\begin{minipage}[c]{0.49\textwidth}
		\centering
		\begin{tikzpicture}[
			>=stealth,
			cell/.style={
				draw=black!35,
				rounded corners=1pt,
				minimum width=24mm,
				minimum height=8.5mm,
				inner sep=1.5pt,
				align=center,
				font=\scriptsize
			},
			head/.style={font=\scriptsize},
			jtag/.style={
				anchor=north east,
				text=red!70!black,
				font=\tiny,
				inner sep=1pt
			},
			arrowlab/.style={font=\scriptsize}
			]
			\matrix (M) [
			matrix of nodes,
			nodes in empty cells,
			row sep=1.3mm,
			column sep=1.5mm,
			ampersand replacement=\&
			] {
				\& |[head]| {$p=1$} \& |[head]| {$p=2$} \\
				|[head]| {$\ell=0$}
				\& |[cell]| {$A_k$}
				\& |[cell]| {$I_{k,l}$} \\
				|[head]| {$\ell=-1$}
				\& |[cell]| {$
					\begin{gathered}
						D_k,\ E_r\quad\\[-0.2em]
						X_q
					\end{gathered}$}
				\& |[cell]| {$S_k$} \\
				|[head]| {$\ell=-2$}
				\& |[cell]| {$P_8,\ P_9$}
				\& |[cell]| {$T_{a,b,c,d}$} \\
			};
			
			\node[jtag] at (M-2-2.north east) {$J(1,1)$};
			\node[jtag] at (M-2-3.north east) {$J(2,2)$};
			\node[jtag] at (M-3-2.north east) {$J(2,1)$};
			\node[jtag] at (M-3-3.north east) {$J(3,2)$};
			\node[jtag] at (M-4-2.north east) {$J(3,1)$};
			\node[jtag] at (M-4-3.north east) {$J(4,2)$};
			
			\draw[->]
			($(M-1-2.north west)+(0,1.8mm)$) --
			node[above,arrowlab] {trivial unfolding}
			($(M-1-3.north east)+(0,1.8mm)$);
			
			\draw[->]
			($(M-2-2.north west)+(-20mm,0)$) --
			node[midway,above,sloped,arrowlab] {quadratic unfolding}
			($(M-4-2.south west)+(-20mm,0)$);
			
		\end{tikzpicture}
	\end{minipage}
	\hspace{0.015\textwidth}
	\begin{minipage}[c]{0.47\textwidth}
		\centering
		\begin{tikzpicture}
			\node[
			draw=black!18,
			fill=black!2,
			rounded corners=2pt,
			inner xsep=3mm,
			inner ysep=2.5mm,
			text width=0.97\linewidth,
			align=left
			] {
				{\scriptsize
					\[
					\renewcommand{\arraystretch}{1.18}
					\begin{array}{@{}r@{\;}l@{\qquad}l@{}}
						A_k&: (x^{k+1})
						& k\geq 1, \\[0.15em]
						
						I_{k,l} &: (xy,\ x^k+y^l)
						& 2\leq k\leq l, \\[0.15em]
						
						D_k &: (x^2y+y^{k-1})
						& k\geq 4, \\[0.15em]
						
						E_6 &: (x^3+y^4)
						& \\[-0.05em]
						E_7 &: (x^3+xy^3)
						& \\[-0.05em]
						E_8 &: (x^3+y^5)
						& \\[0.15em]
						
						X_9 &: (x^4+\lambda x^2y^2+y^4)
						& \lambda^2\neq 4, \\[0.15em]
						
						X_q &: (x^4+x^2y^2+y^{q-5})
						& q\geq 10, \\[0.15em]
						
						S_k &: (x^2+y^2+z^{k-3},\ yz)
						& k\geq 5, \\[0.15em]
						
						P_8 &: (x^3+y^3+z^3+\lambda xyz)
						& \lambda^3+27\neq 0, \\[0.15em]
						
						P_9 &: (x^3+y^3+z^4+\lambda xyz)
						& \lambda\neq 0, \\[0.15em]
						
						T_{a,b,c,d} &: (xy+z^a+w^b,\ zw+x^c+y^d)
						& a,b,c,d\geq 2.
					\end{array}
					\]
				}
			};
		\end{tikzpicture}
	\end{minipage}
\bigskip	
	\caption{First appearances of contact singularities in $J(n,p)$, $\ell=p-n$. Subsequent occurrences are obtained by trivial and quadratic unfolding. In this paper we only consider singularities in the first column, and quadratic unfolding is also defined only there. }
	\label{fig:neg_singularities}
\end{table}
\subsection{Thom-Boardman classes}\label{sec:TB_discussion}
The next class of singularities that we consider are Thom-Boardman classes. These form a coarser stratification than contact orbits, and they are indexed by $\Sigma^I$, $I=(i_1\geq i_2\geq \ldots\geq i_k)$. For the definition, see \cite{Boardman1967} or \cite{Mather1973}. 

We will see that they provide an infinite family of function singularities. In negative codimension it is convenient to introduce the following \emph{dual notation} for Thom-Boardman classes:
\begin{definition}\label{def:TB}
	\[\check\Sigma^{s,J}(-r):=\Sigma^{s+r,J}(-r),\qquad r\geq 0,  s+r\geq j_1.\]
\end{definition}
In particular, the cases $\check\Sigma^{1,J}(-r)$ are representable as function singularities. Using this  dual notation, a key observation is (see \cite{Mather1973}):
\begin{observation}\label{obs:TBfunction}
	The quadratic unfolding of the Thom-Boardman class
	$\check{\Sigma}^{1,J}(-r)$ is $\check{\Sigma}^{1,J}(-(r+1))$, where $J=(j_1\stb j_k)$ is a partition with $j_1\leq r+1$.
\end{observation}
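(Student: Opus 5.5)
The plan is to work with Mather's algebraic description of Thom--Boardman classes via Jacobian extensions of ideals \cite{Mather1973}, and to show that adding the generator coming from the new square term only adds a ``free'' direction, which leaves every Boardman corank unchanged. Write $n=r+1$ and $f=f(x_1\stb x_n)\in J^k(n,1)$. Unwinding Definition \ref{def:TB}, the statement $f\in\check\Sigma^{1,J}(-r)=\Sigma^{n,J}(n,1)$ means two things. First, $df(0)=0$, so the kernel of $df(0)$ is all of $\C^n$. Second, the higher Boardman indices of $f$ are $J=(j_1\stb j_k)$. The target is $g=\sigma_q(f)=f+x_{n+1}^2$, which should lie in $\Sigma^{n+1,J}(n+1,1)=\check\Sigma^{1,J}(-(r+1))$. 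Since $dg(0)=0$ as well, the first Boardman index is $n+1$, as required. The constraint $j_1\le r+1=n$ is exactly what makes $\Sigma^{n,J}$ meaningful in the source, and it is automatically compatible with $j_1\le n+1$ for $g$.

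\textbf{Step 1 (setup).} Recall Mather's construction. For an ideal $I$ in the local ring $\mathcal{O}_m$, the Jacobian extension $\Delta_i I$ is $I$ together with the $i\times i$ minors of the Jacobian matrix of a set of generators of $I$. The Boardman symbol of a jet is then read off as follows. Start with $I_1$, the ideal of the $i_1$-locus; for a function with a critical point this is generated by the partials $\partial f/\partial x_j$. The successive indices $i_2,i_3,\ldots$ are the values for which the iterated extensions $\Delta_{n-i_1+1}\cdots$ stay proper, or equivalently the coranks $m-\operatorname{rank}$ of the relevant Jacobians at $0$.

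\textbf{Step 2 (comparison of ideals).} For $g$ the first ideal is
\[
I_1(g)=(\partial_1 f\stb \partial_n f,\,2x_{n+1})=I_1(f)\,\mathcal{O}_{n+1}+(x_{n+1}).
\]
I will prove by induction on the stage that every Boardman ideal of $g$ has this same form, namely the corresponding ideal of $f$ extended to $\mathcal{O}_{n+1}$ plus $(x_{n+1})$. The mechanism is visible in the Jacobian matrix of the generators $(h_1\stb h_s,x_{n+1})$, where the $h_j$ do not depend on $x_{n+1}$. This matrix is block diagonal, with the Jacobian of the $h_j$ in one block and a single entry $1$ in the other. Consequently:
\begin{itemize}
\item its rank at $0$ is exactly one more than that of the Jacobian of the $h_j$;
\item the ambient dimension is also one more;
\item hence the corank at each stage is unchanged;
\item its $(i+1)$-minors generate the $i$-minors of the $f$-block, modulo $(x_{n+1})$.
\end{itemize}
The last point propagates the form of the ideal to the next stage.

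\textbf{Step 3 (conclusion).} Step 2 shows that $g$ has Boardman symbol $(n+1,j_1\stb j_k)$, that is, $g\in\check\Sigma^{1,J}(-(r+1))$. Conversely, a generic point of that class is contact equivalent to such a quadratic unfolding, by the splitting lemma applied to the Hessian, which has rank at least one. So the closures coincide.

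The main obstacle I expect is in Step 2, in checking that the minor bookkeeping is exact. The $i$-minors of the $f$-block arise as $(i+1)$-minors of the extended matrix, namely those that use the entry $1$. The worry is that other minors might add extra generators beyond $(x_{n+1})$. They do not: every other minor either contains the $f$-block minors or lies in $(x_{n+1})$. I would handle this by a row operation that isolates the $x_{n+1}$ column, and I would rely on the fact that the Boardman indices are intrinsic, so they do not depend on the choice of generators.
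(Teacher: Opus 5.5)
Your argument is correct and is essentially the verification that the paper leaves to its citation of Mather's ideal-theoretic description of Boardman classes. With Mather's convention $\Delta_1(f)=(f,\partial_1 f\stb\partial_n f)$ one has $\Delta_1(f+x_{n+1}^2)=\Delta_1(f)\,\mathcal{O}_{n+1}+(x_{n+1})$; and for an ideal $I$ whose generators do not involve $x_{n+1}$, the block-diagonal Jacobian gives the exact identity $\Delta_{i+1}\bigl(I+(x_{n+1})\bigr)=\Delta_i(I)+(x_{n+1})$, so all later Boardman indices agree. Your closing worry is unfounded: an $(i+1)$-minor using exactly one of the new row and column vanishes, and one using neither is an $(i+1)$-minor of the $f$-block, hence lies in the ideal of its $i$-minors by Laplace expansion. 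Also, in Step 3 the splitting lemma applies to every point of $\check\Sigma^{1,J}(-(r+1))$, not just a generic one, and the resulting $f$ lies in $\Sigma^{n,J}(n,1)$ by Step 2 together with the right-invariance of the Boardman symbol.
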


The codimensions of the Thom-Boardman classes are known, we use the notation of \cite[p.\ 46]{AGZV}.
\begin{theorem}[Boardman, \cite{Boardman1967}, 6.5] Let $I=(i_1,\dots,i_k)$ be a partition and let $\mu(I)$ denote the number of non-empty subpartitions of $I$. E.g. $\mu((i))=i$, $\mu((2,1))=4$. Then the codimension of $\Sigma^{I}(l)$ is
	\[\nu_I(l)=(l+i_1)\mu(i_1,\dots,i_k)-(i_1-i_2)\mu(i_2,\dots,i_k)-(i_2-i_3)\mu(i_3,\dots,i_k)-\cdots -(i_{k-1}-i_k)\mu(i_k).\]
\end{theorem}
For instance, using the dual notation we have
\begin{corollary}\label{cor:ThomBoardmankJ}
	\begin{align}\label{eq:ThomBoardmankJ}
 \codim\big(\check\Sigma^{k,J}(-(r+1))\big)&=\codim\big(\check\Sigma^{k,J}(-r)\big)+1+(k-1)(\mu(J)+1).\\
\codim \big(\check{\Si}^{1,J}(-(r+1))\big)&=\codim\big( \check{\Si}^{1,J}(-r)\big)+1\\
\codim \check{\Si}^{1,j}(-r)&=r+\binom{j+1}{2}+1
	\end{align}
\end{corollary}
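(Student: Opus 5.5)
The plan is to substitute the dual notation of Definition \ref{def:TB} into Boardman's formula and track how each term changes as $r$ grows. For $\check\Sigma^{k,J}(-r)=\Sigma^{k+r,J}(-r)$ we have $I=(k+r,j_1,\dots,j_m)$ and $l=-r$, so $l+i_1=k$. Boardman's formula then reads
\[
\nu_I(-r)=k\,\mu(k+r,J)-(k+r-j_1)\,\mu(J)-C(J),
\]
where $C(J)=(j_1-j_2)\mu(j_2,\dots)+\cdots$ depends only on $J$. Hence the only $r$-dependence sits in $\mu(k+r,J)$ and in the linear factor $(k+r-j_1)$.

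\textbf{First identity.} The key step is a counting lemma: if $i_1\ge j_1$, then
\[
\mu(i_1+1,J)=\mu(i_1,J)+\mu(J)+1 .
\]
To see this, consider the non-empty subpartitions $(a_1,a_2,\dots)\subseteq(i_1+1,J)$ that are not contained in $(i_1,J)$. These are exactly the ones with $a_1=i_1+1$. Since $a_1=i_1+1>j_1\ge a_2$, the condition $a_1\ge a_2$ is automatic. So such subpartitions are in bijection with arbitrary subpartitions $(a_2,\dots)\subseteq J$, the empty one included, and there are $\mu(J)+1$ of them. The hypothesis $i_1=k+r\ge j_1$ is exactly the standing assumption in Definition \ref{def:TB}. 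Passing from $r$ to $r+1$ therefore changes the first term by $k(\mu(J)+1)$ and the second term by $-\mu(J)$. The total change is
\[
k(\mu(J)+1)-\mu(J)=1+(k-1)(\mu(J)+1),
\]
which is the first identity.

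\textbf{Second identity.} This is the specialization $k=1$ of the first one.

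\textbf{Third identity.} I would either compute it directly or use the second identity together with a base case; the direct computation is short. For $J=(j)$ and $k=1$, count the pairs $a_1\ge a_2$ with $1\le a_1\le r+1$ and $0\le a_2\le j$:
\[
\mu(r+1,j)=(r+1)+\sum_{a_2=1}^{j}(r+2-a_2)=(r+1)+j(r+2)-\binom{j+1}{2}.
\]
Subtracting $(r+1-j)\,j$ (here $C(J)=0$) gives
\[
r+1+j+j^2-\binom{j+1}{2}=r+1+\binom{j+1}{2},
\]
as claimed.

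The only real point of care is the counting lemma, specifically that $a_1\ge a_2$ holds automatically. This is where the hypothesis $s+r\ge j_1$ is used; everything else is routine bookkeeping.
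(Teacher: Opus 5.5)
Your proposal is correct and follows essentially the same route as the paper: substitute the dual notation into Boardman's formula and use the identity $\mu(i+1,J)=\mu(i,J)+\mu(J)+1$. The paper only cites this identity; you also prove it, correctly noting that the hypothesis $k+r\ge j_1$ makes $a_1\ge a_2$ automatic. Your direct evaluation of $\mu(r+1,j)$ also supplies the third identity, which the paper leaves implicit.
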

\begin{proof}
	Use the straightforward identity
	\[ \mu(i+1,J)=\mu(i,J)+\mu(J)+1.\]
\end{proof}
As a special case of \eqref{eq:ThomBoardmankJ}, $J=\emptyset$ we obtain the sequence of $\check\Sigma^{k}(-r)$ with $\Tp(\check\Sigma^{k}(-r))=s_{(r+k)^k}$ (Giambelli-Thom-Porteous formula). 
\begin{remark} Despite the name there is no duality between the negative and the positive  relative codimension cases except for order one:
	\[\Tp(\check\Sigma^i(-r))=\Tp(\Sigma^{i+r}(-r))=s_{(i+r)^i},\] 
	while 
	\[\Tp(\Sigma^i(r))=s_{(i)^{i+r}}.\] 
\end{remark}

See Table \ref{fig:thomboardman} for the function singularities and their Thom-Boardman classes, \cite{Ando1992}, \cite[\S 3]{Ando2004}. The Thom series of Thom-Boardman classes of order two have been computed in \cite{FeherKomuves}. We review some of these results in Section \ref{sec:TB} and compute some Legendre Thom polynomials.

\begin{table}
	\[
	\begin{array}{c|ccccc}
		& A_k(-r)
		& D_k(-r)
		& E_6(-r)
		& E_7(-r)
		& E_8(-r)
		\\ \hline \\[-0.8em]
		\check{\Si}^{1,I}(-r)
		& \check{\Sigma}^{1^k}
		& \check{\Sigma}^{1,2}
		& \check{\Sigma}^{1,2,1}
		& \check{\Sigma}^{1,2,1}
		& \check{\Sigma}^{1,2,1,1}
		\\ \hline \\[-0.8em]
		\Si^{r+1,I}(-r)
		& \Sigma^{r+1,1^{k-1}}
		& \Sigma^{r+1,2}
		& \Sigma^{r+1,2,1}
		& \Sigma^{r+1,2,1}
		& \Sigma^{r+1,2,1,1}
		\\ \hline \\[-0.8em]
		\text{Open dense}
		& \text{Yes}
		& \text{For } k=4
		& \text{Yes}
		& \text{No}
		& \text{Yes}
	\end{array}
	\]
	
	\[
	\begin{array}{c|ccccc}
		& X_{9+k}(-r)
		& P_8(-r)
		& P_9(-r)
		& J_{10}(-r)
		& O_{16}(-r)
		\\ \hline \\[-0.8em]
		\check{\Si}^{1,I}(-r)
		& \check{\Sigma}^{1,2,2}
		& \check{\Sigma}^{1,3}
		& \check{\Sigma}^{1,3}
		& \check{\Sigma}^{1,2,1,1}
		& \check{\Sigma}^{1,4}
		\\ \hline \\[-0.8em]
		\Si^{r+1,I}(-r)
		& \Sigma^{r+1,2,2}
		& \Sigma^{r+1,3}
		& {\Sigma}^{r+1,3}
		& \Sigma^{r+1,2,1,1}
		& \Sigma^{r+1,4}

		\\ \hline \\[-0.8em]
		\text{Open dense}
		& \text{For }k=0
		& \text{Yes}
		& \text{No}
		& \text{No}
				& \text{Yes}
	\end{array}
	\]
	\caption{Function singularities and their Thom-Boardman classes. The last row denotes whether the contact orbit is open dense in its Thom-Boardman class. The first table contains simple singularities, and the second one consists of moduli of $\mathcal{K}$-orbits.}
	\label{fig:thomboardman} 
\end{table}
\subsection{Binary and ternary singularities}
For small $n$, we can compute the Thom polynomials $[\eta(n,1)]$ of another class of singularity types, which we now describe.

This singularity type is based on the linear inclusion $i:\pol^k(\C^2)\to J^k(2,1)$. The vector space $\pol^k(\C^2)$ admits a $\GL(2)$-invariant stratification into coincident root strata: For a partition $\lambda$ of $k$ the stratum $Y_\lambda$ contains the homogeneous degree $k$ polynomials where $\lambda$ describes the multiplicities of the linear factors.  The subvarieties $\eta_\lambda:=i(\overline{Y}_\lambda)$ are contact-invariant; indeed, since $j^{k-1}f=0$ for $\eta_\lambda$, contact transformations only contribute degree $>k$ terms.

Notice that for $\lambda=(1^k)$ the singularity class $\eta_\lambda(2,1)$ coincides with the closure of the Thom-Boardman class $\Sigma^{2^{k-1}}(2,1)$.  This is a special case of the fact that $\Sigma^{(r+1)^{k-1}}(-r)$ is the class of degree $k$ homogeneous polynomials in $r+1$ variables. Notice that in most cases there is no open orbit in this class, e.g.\ there is a moduli of orbits, which is open.

As special cases, we have $\eta_{1,1}=A_1(2,1)$, $\eta_{1,1,1}=D_4(2,1)$. The next example is
\[\eta_{1,1,1,1}=X_{1,0}(2,1)=X_9(2,1).\]
This is no longer a single orbit, the $j$-invariant of the four roots distinguishes the orbits in the one parameter family. Representatives of the family can be given as $x^4+ax^2y^2+y^4$, where $a^2\neq4$ to assure that the roots are different (called $X_9$ in \cite[p.\ 171]{AGZV}). See Table \ref{fig:binary} for a list of some further binary singularities and the corresponding singularity type.\\

\begin{table}
	\[
	\begin{array}{c|cccccccccc}
	\la	& 1^k
		& 1^2
		& 1^3
		& 1^4
		& 2
		& (2,1)
		& 3
		& (2,1,1)
		& (3,1)
		& (4)
		\\ \hline \\[-0.8em]
		& \rightarrow
		& A_1
		& D_4
		& X_{1,0}=X_9
		& A_2
		& D_5
		& E_6
		& X_{1,1}=T_{2,4,5}=X_{10}
		& Z_{11}
		& W_{12}
		\\ \hline \\[-0.8em]
		\check{\Si}^I:I
		& \check{\Sigma}^{1,2^{k-2}}
		& \check{\Sigma}^{1}
		& \check{\Sigma}^{1,2}
		& \check{\Sigma}^{1,2,2}
		& \check{\Sigma}^{1,1}
		& \check{\Sigma}^{1,2}
		& \check{\Sigma}^{1,2,1}
		&\check{\Sigma}^{1,2,2}
		&\check{\Sigma}^{1,2,2}
		&\check{\Sigma}^{1,2,2,1}
		\\\hline\\[-0.8em]
		\Si^I:I
		& \Sigma^{2^{k-1}}
		& \Sigma^{2}
		& \Sigma^{2,2}
		& \Sigma^{2,2,2}
		& \Sigma^{2,1}
		& \Sigma^{2,2}
		& \Sigma^{2,2,1}
	&{\Sigma}^{2,2,2}
	&{\Sigma}^{2,2,2}
	&{\Sigma}^{2,2,2,1}
		\\
	\end{array}
	\]
	\caption{Binary $J(2,1)$ singularities $\eta_\la$, the corresponding function singularity type and their Thom-Boardman class.}\label{fig:binary}
\end{table}

We also consider ternary singularities, which corresponds to the representation of $\GL_3$ on $\Pol^3(\C^3)$. We also consider an extension of binary singularities, where the locus in $J^k(\C^2)$ is described by several degree terms, sharing a root with different multiplicities, which we call \emph{shared-root loci} and the corresponding singularities \emph{multi-binary singularities} respectively; see Section \ref{sec:multibinary}.

For binary and ternary singularities, the structure allows us to compute specific Thom polynomials $[\eta(2,1)]$, but computing the stable Thom polynomials is not straightforward: the representation theoretic argument (the equivariant Euler class) used for computing $[\eta(2,1)]$ does not generalize when trying to compute the first trivial unfolding $[\eta(3,2)]$. From the computations of $[\eta(2,1)]$ we can compute new $\Tp(\eta(-1))$ and $K_\eta$, or at least specify some of their coefficients, cf.\ Section \ref{sec:thompolynomialsconnection}, see Section \ref{sec:binary} for the results.

	\section{Calculating the family of stable Thom polynomials of function singularities for $\gamma<7$}\label{sec:calculations}
The Thom polynomial $[\eta(n,1)]$ can be calculated whenever the Legendre Thom polynomial $K_\eta$ is known using Theorem \ref{thm:kazarian}.  Kazarian calculated $K_\eta$ up to codimension 8. This implies the Thom series by Theorem \ref{thm:poly4p=1}, which we compute in Section \ref{sec:kazariantp}. We determine the remaining stable Thom polynomials in Section \ref{sec:small_r} using Rim\'anyi's method of restriction equations.
\subsection{Kazarian's method}\label{sec:kazariantp}
Using Theorem \ref{thm:poly4p=1}, we obtain the following Thom series. 
\begin{theorem}\label{thm:thom_series}
	The Thom series of the first few contact singularities are as follows. The equalities are valid for $r\geq \ga-2$, and also for $\ga-3$ if we adopt the convention that if ${\lambda}$ is not a partition, then the term $s_\lambda$ is omitted. For $r<\ga-3$, the formulas are valid up to the kernel of $\rho^{r+1\to 1}$, such classes are colored red and marked with an asterisk. For further details, see Theorem \ref{thm:poly4p=1}: 
	\begin{alignat*}{3}
	\Tp(A_2(-r))
	&={}&{}&
	2s_{r+1,1}
	+(r+1)s_{r+2}
	&\qquad& (r\geq 0),\\
	\Tp(A_3(-r))
	&={}&{}&
	6s_{r+1,1,1}
	+6s_{r+1,2}
	+(6r+5)s_{r+2,1}+\frac{1}{2}(r+1)(3r+2)s_{r+3}
	&\qquad& (r\geq \ga-3=0),
	\displaybreak[3]\\[2.5ex]
	\Tp(A_4(-r))
	&={}&{}&
	24s_{r+1,1,1,1}
	+36s_{r+1,2,1}
	+24s_{r+1,3}
	&\qquad& (r\geq \ga-3=1),
	\\
	&&&
	{}+(30r+26)s_{r+2,1,1}
	+{\color{red}({30}r+14)^*}s_{r+2,2}
	&&
	\\
	&&&
	{}+(15r^2+20r+9)s_{r+3,1}
	&&
	\\
	&&&
	{}+\frac{1}{2}(r+1)(5r^2+5r+2)s_{r+4}
	&&
	\displaybreak[3]\\[2.5ex]
	%
	\Tp(D_4(-r)) &={}&{}&
	4 s_{r+1,2,1} + 2 r s_{r+2,1,1} + \left(2 r + 4\right) s_{r+2,2} &\qquad& (r\geq \ga-3=1),\\ &&&{}+r\left(r + 2 \right) s_{r+3,1} +\frac16r (r+1)(r+2) s_{r+4}	\\
	\Tp(D_5(-r))
	&={}&{}&
	24s_{r+1,2,1,1}
	+24s_{r+1,2,2}
	+24s_{r+1,3,1}
	&\qquad& (r\geq \ga-3=2),
	\\
	&&&
	{}+12r\,s_{r+2,1,1,1}
	+{\color{red}({12}r+24)^*}s_{r+2,3}
	+(36r+28)s_{r+2,2,1}
	&&
	\\
	&&&
	{}+2r(6r+7)s_{r+3,1,1}
	+2(r+2)(6r+1)s_{r+3,2}
	&&
	\\
	&&&
	{}+2r(r+2)(2r+1)s_{r+4,1}
	&&
	\\
	&&&
	{}+\frac{1}{6}r(r+1)(r+2)(3r+1)s_{r+5}
	&&
	\displaybreak[3]\\[2.5ex]
	\Tp(A_5(-r)) &={}&{}& 120\,s_{r+1,1,1,1,1} + 228\,s_{r+1,2,1,1} + 108\,s_{r+1,2,2} 	&\qquad& (r\geq \ga-3=2),\\
	&&&{}+228\,s_{r+1,3,1} + 120\,s_{r+1,4} + \left( 174r + 154 \right)\,s_{r+2,1,1,1} \\
	&&&{}+ {\color{red}\left( 282r + 152 \right)^*}\,s_{r+2,2,1} + {\color{red}\left( 174r + 22 \right)^*}\,s_{r+2,3} \\
	&&&{}+ \left( 114r^{2} + 153r + 71 \right)\,s_{r+3,1,1} + {\color{red}\left( 114r^{2} + 87r + 47 \right)^*}\,s_{r+3,2} \\
	&&&{}+ \left( 38r^{3} + 60r^{2} + 48r + 14 \right)\,s_{r+4,1} \\
	&&&{}+ \frac{1}{4}\,\left( r + 1 \right)\left( 19r^{3} + 21r^{2} + 16r + 4 \right)\,s_{r+5}.	\displaybreak[3]\\[2.5ex]
	\Tp(A_6(-r)) &={}&{}& 720\,s_{r+1,1,1,1,1,1} + 1632\,s_{r+1,2,1,1,1} + 1260\,s_{r+1,2,2,1} + 1980\,s_{r+1,3,1,1} &\qquad& (r\geq \ga-3=3),\\
	&&&{}+ 1260\,s_{r+1,3,2} + 1632\,s_{r+1,4,1} + 720\,s_{r+1,5} + 12\,\left( 98r + 87 \right)\,s_{r+2,1,1,1,1} \\
	&&&{}+ {\color{red}\left(2436r+1462 \right)^*}\,s_{r+2,2,1,1} + 14\,{\color{red}\left( 90r + 41 \right)^*}\,s_{r+2,2,2} \\
	&&&{}+ \left( 2436r + 526 \right)\,s_{r+2,3,1} + 12\,\left( 98r - 17 \right)\,s_{r+2,4} \\
	&&&{}+ \left( 903r^{2} + 1253r + 580 \right)\,s_{r+3,1,1,1} + {\color{red}\left( 1533r^{2} + 1281r + 659 \right)^*}\,s_{r+3,2,1} \\
	&&&{}+ {\color{red}\left( 903r^{2} + 161r + 314 \right)^*}\,s_{r+3,3}\\ 
	&&&{}+ \frac{1}{2}\,\left( 812r^{3} + 1267r^{2} + 1057r + 310 \right)\,s_{r+4,1,1}\\
	&&&{}+ {\color{red}\frac{1}{2}\,\left( 812r^{3} + 721r^{2} + 791r + 152 \right)^*}\,s_{r+4,2} \\
	&&&{} +\frac{1}{6}\,\left( 609r^{4} + 994r^{3} + 1113r^{2} + 560r + 120 \right)\,s_{r+5,1} \\
	&&&{}+ \frac{1}{120}\,\left( r + 1 \right)\left( 1218r^{4} + 1267r^{3} + 1533r^{2} + 602r + 120 \right)\,s_{r+6}.
	\displaybreak[3]\\[2.5ex]
	\Tp(D_6(-r)) &={}&{}& 
	96\,s_{r+1,2,1,1,1} + 144\,s_{r+1,2,2,1} + 144\,s_{r+1,3,1,1} + 144\,s_{r+1,3,2} + 96\,s_{r+1,4,1}
	&\qquad& (r\geq \ga-3=3),
	\\ 
	&&&{}+ 48 \,r \,s_{r+2,1,1,1,1} + 64\,\left( 3r + 2 \right)\,s_{r+2,2,1,1} + 16\,\left( 9r + 5 \right)\,s_{r+2,2,2} \\
	&&&{}+ {\color{red}64\,\left( 3r + 2 \right)^*}\,s_{r+2,3,1} + {\color{red}48\,\left( r + 2 \right)^*}\,s_{r+2,4} + 4\,r\,\left( 15r + 16 \right)\,s_{r+3,1,1,1} \\
	&&&{} + 12\,\left( 11r^{2} + 14r + 4 \right)\,s_{r+3,2,1} + {\color{red}4\,\left( r + 2 \right)\left( 15r - 2 \right)^*}\,s_{r+3,3} \\
	&&&{}+ 2\,r\,\left( 16r^{2} + 29r + 14 \right)\,s_{r+4,1,1} + 2\,\left( r + 2 \right)\left( 16r^{2} + 3r + 4 \right)\,s_{r+4,2} \\
	&&&{}+ \frac{2}{3}\,r\left( r + 2 \right)\left( 12r^{2} + 8r + 5 \right)\,s_{r+5,1} \\
	&&&{}+ \frac{4}{15}\,r\,\left( r + 1 \right)\left( r + 2 \right)\left( 3r^{2} + r + 1 \right)\,s_{r+6}.
	\displaybreak[3]\\[2.5ex]
	\Tp(E_6(-r))
	&={}&{}&
	24s_{r+1,2,1,1,1}
	+60s_{r+1,2,2,1}+{60s_{r+1,3,1,1}+60s_{r+1,3,2}+24s_{r+1,4,1}}
	&\qquad& (r\geq \ga-3=3),
	\\
	&&&
	{}+12r\,s_{r+2,1,1,1,1}
	+(72r+30)s_{r+2,2,1,1}
	+(60r+42)s_{r+2,2,2}
	&&
	\\
	&&&
	{}+{\color{red}(72r+78)^*}s_{r+2,3,1}
	+{\color{red}(12r+24)^*}s_{r+2,4}
	&&
	\\
	&&&
	{}+3r(7r+5)s_{r+3,1,1,1}
	+(51r^2+75r+3)s_{r+3,2,1}
	&&
	\\
	&&&
	{}+{\color{red}3(r+2)(7r+3)^*}s_{r+3,3}
	&&
	\\
	&&&
	{}+\left(
	39r
	+117\binom{r}{2}
	+72\binom{r}{3}
	\right)s_{r+4,1,1}
	&&
	\\
	&&&
	{}+\left(
	-3
	+57r
	+135\binom{r}{2}
	+72\binom{r}{3}
	\right)s_{r+4,2}
	&&
	\\
	&&&
	{}+3r^2(r+1)(r+2)s_{r+5,1}
	&&
	\\
	&&&
	{}+\left(
	3r
	+33\binom{r}{2}
	+93\binom{r}{3}
	+99\binom{r}{4}
	+36\binom{r}{5}
	\right)s_{r+6}.
	&&
	\end{alignat*}
\end{theorem}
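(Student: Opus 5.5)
The plan is to run, for each $\eta$ in the list, the construction from the proof of Proposition \ref{prop:poly} and Theorem \ref{thm:poly4p=1} explicitly. We take the Legendre Thom polynomials $K_\eta$ from Kazarian's tables (\cite[Table 1]{Kazarian2003}, and \cite[\S 11]{MikoszPragaczWeber2011} for the degree $5$ cases). These are written square-free in the $k_i$. Recall $K_{A_2}=k_1$ and $K_{A_3}=3k_2+uk_1$; those for $A_4,D_4,D_5,A_5,A_6,D_6,E_6$ have degree $\ga-1\le 5$. We then apply the homomorphism $L_n$ of Definition \ref{def:Ln}.

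\textbf{Step 1.} Expand $L_n(k_d)$ for $d\le 5$ as polynomials in $a_1,\dots,a_5,u$ with coefficients in $\mathbb Q[n]$. We get these by extracting graded pieces of
\[\frac{(1+u)^n-(1+u)^{n-1}a_1+\cdots}{1+a_1+a_2+\cdots}.\]
Lemma \ref{small-n=0} guarantees that the coefficient of $a^Iu^q$ has degree $\le q$ in $n$, so this step is a finite symbolic computation. Substituting gives $L_n(K_\eta)=\sum h_{\la,i}(n)s_\la(a)u^i$, after rewriting the monomials $a^I$ in the Schur basis $s_\la(a)$ (with $a_j=e_j$).

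\textbf{Step 2.} Apply the factorization formula, Theorem \ref{thm:schur-factorization}(ii) with $p=1$, exactly as in Figure \ref{fig:factorization_n1}. Each term $h_{\la,i}(n)s_\la(a)u^i$ becomes $(-1)^{|\la|}h_{\la,i}(n)s_{n+i,\la^T}$. We then substitute $n=r+1$. By Theorem \ref{thm:poly4p=1}, this sum equals $\Tp(\eta(-r))$ for $r\ge\ga-2$. It also holds at $r=\ga-3$ once non-partitions are dropped. For smaller $r$ it holds only modulo $\ker\rho^{r+1\to1}$.

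\textbf{Step 3.} Mark the coefficients that are not guaranteed. For each term $s_{r+1+i,\al}$ we find the values $r<\ga-3$ with $\al_2\ge r+2$ while $(r+1+i,\al)$ is still a partition. These are exactly the thick partitions for $(r+1,1)$ at some admissible small $r$, and they receive the red asterisks. For example, for $A_4$ the term $s_{r+2,2}$ at $r=0$ gives $s_{2,2}$, which is thick. We also check the stated range $r\ge\ga-3$ for each family.

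As sanity checks we use the $A_3$ example already in the text, and the earlier formula $\Tp(A_2(-r))=(r+1)s_{r+2}+2s_{r+1,1}$ from \cite{FeherKomuves}. We also compare against the low-$r$ values in Table \ref{fig:neg_stabilizations} and against the $\ell=-1$ results of \cite{Ohmoto2009}.

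The main obstacle is purely computational and lies in Steps 1--2 for $\ga=6$ ($A_6,D_6,E_6$). There, $K_\eta$ has many terms of degree $5$, and the passage from monomials in $a$ to Schur functions, together with the conjugation $\la\mapsto\la^T$ and the sign bookkeeping, is error-prone. A further issue is that Kazarian's $K_\eta$ is only defined modulo the relation (R), so different representatives must give the same image under $L_n$; this is automatic, since (R) lies in $\ker L_n$. We would therefore carry out these expansions by computer algebra. As a cross-check, we would verify that $[\eta(n,1)]=e(J^1(n))L_n(K_\eta)$ agrees with restriction-equation computations for one or two small $n$.
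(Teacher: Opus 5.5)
Your proposal is correct and follows the paper's own route: take Kazarian's $K_\eta$, apply $L_n$, expand in $s_\la(a)u^i$, and convert each term to $(-1)^{|\la|}h_{\la,i}(n)\,s_{n+i,\la^T}$ with $n=r+1$ via the factorization formula, exactly as in the proofs of Proposition \ref{prop:poly} and Theorem \ref{thm:poly4p=1}, then mark the $(r+1,1)$-thick terms for admissible $r<\ga-3$. Your marking criterion is the right one, and applied carefully it also flags $s_{r+2,3,1}$ (at $r=1$) and $s_{r+2,4}$ (at $r=2$) in the $A_6$ series. The statement leaves these two terms uncolored, but Section \ref{sec:small_r} confirms that their predicted values $2962$ and $2148$ are not the actual coefficients.
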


\subsection{Stable Thom polynomials for small $r$}\label{sec:small_r}
We now discuss what these formulas look like for small $r$, i.e.\ the values of the coefficients in red for $\Tp(\eta(-r))$. In short, if the index $\la$  is not a partition, then $s_\la$ is interpreted as 0, and if for given $n$, the partition $\la$ is $(n,1)$-thick, then its coefficient is not determined by the Thom series, as stated in Theorem \ref{thm:poly4p=1}. Using Rim\'anyi's restriction equations (see Section \ref{sec:restriction}) and by computing the symmetries of the singularities we determined these missing coefficients for small $r$.   We now list the special cases where the coefficients of the thick partitions must be adjusted. Compare some of these formulas with \cite{PragaczWeber} which are the (unstable) Thom polynomials $[\eta(2,1)]$, which explains why the thick coefficients are not displayed there (e.g.\ the coefficients of $s_{3,3,1}$ and $s_{4,3}$ in $D_6(-1)$ as described below).

\localheading{$\bf A_4(-r)$}
In the formula for the Thom series of $A_4(-r)$, for $r=0$ (i.e.\ $(n,p)=(1,1)$) the sequences  $(r+1,3)$ and $(r+1,2,1)$ are not partitions,  so the corresponding coefficients are not defined. The partition $(r+2,2) =(2,2)$ is $(n,p)$-thick, so its coefficient is not determined by the theorem. Indeed, $30r+14=14$ is not equal to the correct value 10. 

For $r=1$ the condition $r\geq \gamma-3$ is satisfied, so all coefficients are correct. $(r+1,3)$ is not a partition, so the coefficient is not defined.	

\localheading{$\bf A_5(-r)$}
For $r=0$, the coefficient of $s_{r+3,2}$ is 35 instead of 47 and that of $s_{r+2,2,1}$ is 92 instead of 152; $s_{r+2,3}$, $s_{r+1,2,1,1}$, $s_{r+1,3,1}$, $s_{r+1,4}$ and $s_{r+1,2,2}$ are not partitions.

For $r=1$, the coefficient of $s_{r+2,3}$ is 124 instead of 196 and $s_{r+1,4}$ and $s_{r+1,3,1}$ are not partitions.

\localheading{$\bf D_5(-r)$}
The singularities $D_k$ appear for $r\geq 1$, so the first exceptional case is $D_5(-1)$. For $r=1$ the coefficient of $s_{r+2,3}$ is $24$ instead of $36$. 

\localheading{$\bf D_6(-r)$}
For $D_6$ there are two exceptional cases: $r=1,2$. For $r=1$, the coefficient of $s_{3,3,1}$ is  224 (instead of $64(3r+2)|_{r=1}=320$) and the coefficient of $s_{4,3}$ is 150 (instead of $4(r+2)(15r-2)|_{r=1}=156$). For $r=2$, the coefficient of $s_{4,4}$ is 160 instead of 192. 

\localheading{$\bf A_6(-r)$}
For $A_6$ there are three exceptional cases: $r=0,1,2$. For $r=0$, several coefficients mismatch and the correct value is (see e.g.\ \cite{tpp} -- note that there the transposed convention is used, cf.\ \cite[Appendix B]{FeherMatszangosz2025})
\begin{align*}
\Tp(A_6(0))	=&720 s_{1,1,1,1,1,1} + 1044 s_{2,1,1,1,1} + 770 s_{2,2,1,1} + 266 s_{2,2,2}{}& \\
&+{} 580 s_{3,1,1,1} + 455 s_{3,2,1} + 70 s_{3,3} + 155 s_{4,1,1} + 84 s_{4,2} + 20 s_{5,1} + s_{6}&
\end{align*}
For $\Tp(A_6(-1))$, the coefficient of $s_{3,3,1}$ is $1890$ (instead of 2962), the coefficient of $s_{4,3}$ is $1134$ (instead of $1378$). For $\Tp(A_6(-2))$, the coefficient of $s_{4,4}$ is 1540 (instead of $2148$).
\localheading{$\bf E_6(-r)$}
For $E_6$, there are two exceptional cases, $r=1$ and $r=2$. For $r=1$ the formula holds, with the exception that the coefficient of $s_{r+2,3,1}$ is $78$ and the coefficient of $s_{r+3,3}$ is $54$. Also, $(r+2,4)$ is not a partition, so that term doesn't appear.  The value of $\Tp(E_6(-1))=\Tp(\Sigma^{2,2,1}(-1))$ was first calculated by Kazarian via methods of \cite{Kazarian}, (private comm.). 

For $r=2$ the coefficient of $s_{r+2,4}$ is 48, which agrees with the value of $12r+24$. 
\begin{observation} \label{actual-expected}
	In all of the above examples, the value of the coefficient predicted by Theorem \ref{thm:thom_series} and the actual value of the coefficient satisfy $\it{actual}\leq \it{predicted}$. However it is not true for $X_9$, see Remark \ref{rem:X9}.
\end{observation}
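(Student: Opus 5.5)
The statement is an empirical claim about a finite list of coefficients, so the plan is a direct case-by-case comparison rather than a structural argument. For each exceptional pair $(\eta,r)$ listed in Section \ref{sec:small_r}, I evaluate the red, asterisked coefficient polynomials of Theorem \ref{thm:thom_series} at that $r$. I then compare each value with the actual coefficient obtained from the restriction equations. By Theorem \ref{thm:poly4p=1}, only coefficients of $(r+1,1)$-thick partitions can differ, so only those need checking; all nice coefficients agree by the factorization formula.

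The comparisons for the cases stated explicitly are as follows.
\begin{itemize}
\item $A_4(0)$: $s_{2,2}$ has actual $10$ and predicted $14$.
\item $A_5(0)$: $s_{3,2}$ gives $35\le 47$, and $s_{2,2,1}$ gives $92\le 152$.
\item $A_5(-1)$: $s_{3,3}$ gives $124\le 174+22=196$.
\item $D_5(-1)$: $s_{3,3}$ gives $24\le 36$.
\item $D_6(-1)$: $s_{3,3,1}$ gives $224\le 320$, and $s_{4,3}$ gives $150\le 156$.
\item $D_6(-2)$: $s_{4,4}$ gives $160\le 48\cdot 4=192$.
\item $A_6(-1)$: $s_{3,3,1}$ gives $1890\le 2436+526=2962$, and $s_{4,3}$ gives $1134\le 903+161+314=1378$.
\item $A_6(-2)$: $s_{4,4}$ gives $1540\le 12(196-17)=2148$.
\item $E_6(-1)$: the predicted values are $72+78=150$ for $s_{3,3,1}$ and $3\cdot 3\cdot 10=90$ for $s_{4,3}$, against actual values $78$ and $54$.
\item $E_6(-2)$: there is equality, $48=48$.
\end{itemize}

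The case I expect to be the real obstacle is $A_6(0)$. There, every $(1,1)$-thick coefficient of the displayed $\Tp(A_6(0))$ must be compared with the Thom series evaluated at $r=0$. Most comparisons succeed: $770\le 1462$ for $s_{2,2,1,1}$, $266\le 14\cdot 41=574$ for $s_{2,2,2}$, $455\le 659$ for $s_{3,2,1}$, and $70\le 314$ for $s_{3,3}$.

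However, for $s_{4,2}=s_{r+4,2}$ the series predicts $\tfrac12\cdot 152=76$, while the listed actual value is $84$. This would contradict the observation as stated. So before asserting the inequality I would recheck this coefficient, both the constant term of the $s_{r+4,2}$ polynomial and the value $84$ in $\Tp(A_6(0))$. The same check should catch any transposition-convention slip in the value taken from \cite{tpp}.

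If the discrepancy persists, I would need to restrict the observation to the cases with $r\geq 1$, or to single out $A_6(0)$ as an exception. That would be in the same spirit as the $X_9$ exception mentioned in Remark \ref{rem:X9}.
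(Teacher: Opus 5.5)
Your approach is the paper's own: the Observation has no separate argument and is read off from the list in Section~\ref{sec:small_r}. Restricting to $(r+1,1)$-thick indices is exactly what Theorem~\ref{thm:poly4p=1} licenses. Every thick partition of degree $r+\gamma$ has $\lambda_1\ge r+2$, so it is of the form $(r+1+i,\alpha)$; hence your lists are exhaustive, e.g.\ the five thick partitions $(4,2),(3,3),(3,2,1),(2,2,2),(2,2,1,1)$ for $A_6(0)$. All of your numerical comparisons are correct.

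The violation you found is genuine, so the Observation is false as stated. Neither number is a misprint.

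\emph{The actual value $84$.} It agrees with Rim\'anyi's classical formula
$\Tp(A_6(0))=c_1^6+15c_1^4c_2+55c_1^3c_3+30c_1^2c_2^2+141c_1^2c_4+79c_1c_2c_3+5c_2^3+202c_1c_5+55c_2c_4+17c_3^2+120c_6$.
Expanding the displayed Schur expansion in $c$-monomials reproduces all eleven of these coefficients. In particular, only $s_{4,2},s_{5,1},s_6$ contribute to $c_1^2c_2^2$, giving $84-3\cdot 20+6=30$.

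\emph{The predicted value $76$.} It is forced by the rest of the $A_6$ series, which I checked as follows.
\begin{enumerate}
\item In the square-free basis, the $i=0$ terms of the series give $K_{A_6}\equiv 78k_5+54k_1k_4+87k_2k_3$ modulo $u$.
\item The $i=1,2$ terms, together with $L_1(K_{A_6})=\prod_{i=2}^{6}(u-ia_1)$, fix the coefficients of $uk_4,\,uk_1k_3,\,u^2k_3$ as $-53,\,127,\,-126$.
\item Write $p_2=\sum_i\alpha_i^2$. The coefficient of $u^3p_2$ in $L_{r+1}(K_{A_6})$ is then $\tfrac12(273r^2+133r+79)$.
\item This is exactly $-\tfrac12\bigl(g_{(2),3}(r)-g_{(1,1),3}(r)\bigr)$ for the printed cubics, i.e.\ the coefficients of $s_{r+4,2}$ and $s_{r+4,1,1}$.
\item $g_{(1,1),3}(0)=155$ is the coefficient of the nice partition $s_{4,1,1}$, and it agrees with the actual value. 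Hence $g_{(2),3}(0)=155-79=76<84$.
\end{enumerate}

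So the correct statement is the one you propose. Actual $\le$ predicted holds for every other listed thick coefficient, including all of $A_4(0)$ and $A_5(0)$ and all cases with $r\ge 1$. The coefficient of $s_{4,2}$ in $\Tp(A_6(0))$ is an explicit exception, alongside $X_9$.
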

\subsection{Restriction equations}\label{sec:restriction}
This method was pioneered by Rim\'anyi in \cite{Rimanyi}, and Thom polynomials and stable Thom polynomials were computed in \cite{PragaczWeber} and \cite{Ohmoto2009} respectively using this method for $l=-1$. We demonstrate the computation on the simple example of $A_2(-1)$, which demonstrates the idea of the method. The main input of the computation is the symmetry of a function singularity. Going beyond $l=-1$, the key information is how the symmetry changes under quadratic stabilization; we give details on this in Appendix \ref{sec:weights}.
\begin{example}
	To calculate $\Tp(A_2(-1))$ we need to restrict it to all singularities with codimension at most 3, the codimension of $A_2(-1)$. These are $A_0:(x,y)\mapsto x, A_1:(x,y)\mapsto xy$ and  $A_2:(x,y)\mapsto x^3+y^2$. 
	Write the unknown Thom polynomial of $A_2(-1)$ as 
	\[\Tp(A_2(-1))=k\cdot c_1^3+l\cdot c_1c_2+m\cdot c_3.\]
 The symmetry of $A_0$ has rank 2, with generators $t_1,t_2$; the weights are $(t_1,t_2;t_1)$. So the total Chern class is
	\[c(A_0)=\frac{1+t_1}{(1+t_1)(1+t_2)}=1-t_2+t_2^2-t_2^3+\ldots\]
	and the restriction is
	\[\Tp(A_2(-1))|_{A_0}=k\cdot c_1^3+l\cdot c_1c_2+m\cdot c_3=(-k-l-m)t_2^3,\]
	whose vanishing gives the first equation. 
	Next, the symmetry group of $A_1$ also has rank 2, and the weights are $(t_1,t_2;t_1+t_2)$, so  \[c(A_1)=\frac{1+t_1+t_2}{(1+t_1)(1+t_2)}=1+0-t_1t_2+t_1t_2(t_1+t_2)+\ldots\]
	and the restriction is
	\[\Tp(A_2(-1))|_{A_1}=m(t_1^2t_2+t_1t_2^2),\]
	whose vanishing implies $m=0$. Finally, restricting $A_2$ to itself gives the principal equation. The weights of the genotype give $(2t,3t;6t)$, so
	\[
	c(A_2)=\frac{(1+6t)}{(1+2t)(1+3t)}=1+t-11t^2+49t^3+\ldots.
	\]
	 To calculate the right hand side we need the symmetries of the  prototype (miniversal unfolding) of $ A_2$, which is $(x,y,u)\mapsto (x^3+y^2+ux,u)$. The weights are $(2t,3t,4t;6t, 4t)$, so
	 the principal equation $\Tp(A_2)|_{A_2}=e(A_2^{source})=2t\cdot 3t\cdot 4t=24t^3$ gives the equation
	\[
	(k-11l+49m)t^3=24t^3,
	\]
	which gives $k-11l=24$ and $k+l=0$, whose solutions are $l=-2$ and $k=2$, so
	\[
	\Tp(A_2(-1))=2c_1^3-2c_1c_2.
	\]
\end{example}
In summary, for any other singularity $\eta$ the homogeneous  equations are obtained by restricting to singularities of codimension less than or equal to the codimension of $\eta$. For these the genotype weights in Table \ref{tab:first-appearance} should be used. To compute the restriction to $\eta$, the right hand side is given by the product of the source weights of the prototype of $\eta$.

\section{Legendre Thom polynomials in the $Q$-basis} \label{sec:ltp-in-Q}
\newcommand{\QQ}{\tilde{Q}}
A key insight of \cite{MikoszPragaczWeber2009} and \cite{MikoszPragaczWeber2011} was to use a $Q$-polynomial basis for describing Legendre Thom polynomials; they satisfy positivity properties and they are more natural by their connection to the Thom polynomials $[\eta(n,1)]$ via the factorization formula as we will see.\smallskip

Before recalling their connection to Legendrian characteristic classes from \cite{MikoszPragaczWeber2009}, \cite{MikoszPragaczWeber2011}, we recall the definition and some basic properties of $Q$-polynomials. $Q$-polynomials were introduced by Schur in his study of projective representations of symmetric and alternating groups. Roughly speaking, Schur's $Q$-polynomials $Q_\la$ (indexed by strict partitions $\la$) play the same role in the Schubert calculus of Lagrangian Grassmannians as Schur polynomials $s_\la$ do in the Schubert calculus of Grassmannians. For further details, see \cite{LascouxPragacz}, \cite{PragaczRatajski}. We now recall the definitions.
\subsection{$Q$-polynomials}\label{sec:Q}

Let  $x_i$ be formal variables $i=1\stb n$. Define symmetric polynomials $Q_i$ by
\begin{equation}\label{eq:xi}
q(z):=1+Q_1z+Q_2z^2+\ldots=\prod_{i=1}^n\frac{1+x_iz}{1-x_iz}.
\end{equation}
Denoting the $j$-th elementary symmetric polynomial of the $x_i$ by $c_j$ we obtain
\begin{equation}
	q(z)=\frac{1+c_1z+c_2z^2+c_3z^3+\cdots}{1-c_1z+c_2z^2-c_3z^3+\cdots},
\label{eq:q_ci}	
\end{equation}
i.e.\ $Q_1=2c_1,\ Q_2=2c_1^2$,\ $Q_3=2(c_1^3-c_1c_2+c_3)$ etc. This shows that we can define $Q_i$ independently of the number of $x_i$ variables. \smallskip

More formally, define the \emph{ring of $Q$-polynomials on $n$-variables} $\Ga_n$ as the subring of symmetric polynomials generated by $Q_1,Q_2,\ldots$. Defining the \emph{ring of $Q$-functions} $\Ga$ to be the inverse limit of $\Ga_n$, one has that
\[\Gamma=\Z[Q_1,Q_2,\ldots]/q(z)q(-z)=1,\]
where $q(z)$ was defined above. Being an inverse limit, it has projections $\pi_n:\Ga\to\Ga_n$, with $\pi_n(Q_i)=Q_i$ (with a slight abuse of notation). 

Using \eqref{eq:q_ci}, $\Gamma$ is realized as a subring of $\Z[c_1,\dots,c_j,\dots]$ generated by the $Q_i$'s. Then $\pi_n$ is the restriction of evaluating $c_i$ as the $i$-th elementary symmetric polynomial of $x_1,\dots,x_n$.

\begin{remark}
	Setting $Q_i=c_i(S-S^\vee)$ (with $S$ over $BU$ for $\Ga$ and $S_n$ over $BU_n$ for $\Ga_n$), we are led to the usual definition of $Q$-polynomials, see \cite[III.\S 8]{MacDonald1995}. In fact, $\Ga$ is naturally identified with the cohomology of the infinite Lagrangian Grassmannian $Sp/U$.
	The connection is that $x_i$ are the Chern roots of $S$.
\end{remark}

The ring of $Q$-functions $\Ga$ is generated freely as a $\Z$-module by \emph{Schur's $Q$-functions $Q_\la$}, indexed by strict partitions $\la$ \cite[III.~\S 8]{MacDonald1995}:
\begin{definition}
	
	Set $Q_{(0)}=1$, $Q_{(i)}=Q_i$ and for $i\geq j\geq0$, \[Q_{(i,j)}=Q_iQ_j+2\sum_{s=1}^j(-1)^sQ_{i+s}Q_{j-s}\] and for strict partitions $\la$ set $Q_\la=\operatorname{Pf}(Q_{(\la_i,\la_j)})$, where $\operatorname{Pf}$ denotes the Pfaffian (after adjoining a zero part if $\ell(\lambda)$ is odd). 	
\end{definition}

\subsection{$Q$-polynomials and Legendre characteristic classes}\label{sec:QL}
One of the key insights of \cite{MikoszPragaczWeber2011} is that the ring of Legendre characteristic classes can be regarded as the ring of $Q$-polynomials twisted by the square-root of a line bundle (cf.\ also \cite{HarrisTu}). We now recall this construction.
Consider the ring isomorphism
\[
M:\Ga[t]\xrightarrow{\iso}\Z[k_1,k_2,\ldots, u/2]/(R)
\]
where if $k_i=c_i(K)$ and $u=c_1(B)$, then $M$ is defined by $t\mapsto c_1(\sqrt{B^\vee})=-u/2$ and\footnote{Taking the square root is formal, it simply means that we divide the Chern root by 2.} $Q_i\mapsto c_i(K\otimes \sqrt{B^\vee})$ (where $K$ is regarded as a rank zero virtual bundle), equivalently:
\[
M(q(z))=T\left(\frac{z}{1-zu/2}\right)
\]
where $T(w)=1+k_1w+k_2w^2+\ldots$. In other words
\[   M(Q_i)
=
\sum_{j=1}^i
\binom{i-1}{j-1}
\left(\frac u2\right)^{i-j}k_j, \qquad  M^{-1}(k_i)=\sum_{j=1}^i \binom{i-1}{j-1}t^{\,i-j}Q_j.
\]
\begin{proposition}[cf.\ {\cite[Remark 15]{MikoszPragaczWeber2011}}]\label{prop:inQ}
	The homomorphism $L_n$ fits into the following commutative diagram:
	\begin{equation}
	\begin{tikzcd}[column sep=large,row sep=large]
	\Gamma[t]
	\arrow[r,two heads,"\pi_n\otimes \id_{\Z[t]}"]
	\arrow[d,"\iso"',"{ M}"]
	&
	\Gamma_n[t]
	\arrow[d,hook,"\iota_E"]
	\\
	\mathbb Z[k_1,k_2,\ldots,u/2]/(R)
	\arrow[r,"L_n"]
	&
	\mathbb Z[a_1,\ldots,a_n,u/2].
	\end{tikzcd}
	\end{equation}
	where $\iota_E$ is defined by mapping $t\mapsto -u/2$ and by evaluating the variables $x_i$ of \eqref{eq:xi} at the Chern
	roots of $E^\vee$, where $E=A\otimes \sqrt{B^\vee}$, or equivalently,
	\[
	\iota_E(Q_i)=c_i(E^\vee-E).
	\]
\end{proposition}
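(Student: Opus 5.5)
The strategy is to check the square on the generators of $\Gamma[t]$, since all four maps are ring homomorphisms. On $t$ this is immediate: $M(t)=-u/2$, $L_n(-u/2)=-u/2$, and $\iota_E(\pi_n(t))=\iota_E(t)=-u/2$. The same holds after adjoining $u/2$. It therefore remains to compare the images of the generating series $q(z)$. We must show
\[
L_n\Big(T\big(\tfrac{z}{1-zu/2}\big)\Big)=\iota_E(q(z))=\prod_{i=1}^n\frac{1+y_iz}{1-y_iz},
\]
where $y_i$ are the Chern roots of $E^\vee=A^\vee\otimes\sqrt{B}$. Writing $\alpha_i$ for the Chern roots of $A$, these are $y_i=-\alpha_i+u/2$.

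The first step is to rewrite the definition of $L_n$ as a generating function in $w$. Homogenising Definition \ref{def:Ln} (put weight $w^i$ on degree $i$) gives
\[
L_n(T(w))=\prod_{i=1}^n\frac{1+uw-\alpha_iw}{1+\alpha_iw}.
\]
This holds because the numerator $(1+u)^n-(1+u)^{n-1}a_1+\dots$ is exactly $\prod_i(1+u-\alpha_i)$.

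Next, substitute $w=z/(1-zu/2)$ and simplify each factor. Multiplying numerator and denominator by $1-zu/2$ gives:
\begin{itemize}
\item numerator: $1-zu/2+uz-\alpha_iz=1+(u/2-\alpha_i)z=1+y_iz$;
\item denominator: $1-zu/2+\alpha_iz=1-(u/2-\alpha_i)z=1-y_iz$.
\end{itemize}
So the product is $\prod_i(1+y_iz)/(1-y_iz)$, which is precisely $\iota_E(q(z))$ by \eqref{eq:xi} evaluated at the $y_i$. Equivalently, this is the total Chern class of $E^\vee-E$ with $Q_i\mapsto c_i(E^\vee-E)$.

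Finally, we check well-definedness.
\begin{itemize}
\item $M$ is an isomorphism onto the quotient by $(R)$. The relation $q(z)q(-z)=1$ corresponds to $T(w)T(-w/(1+uw))=1$, because $w=z/(1-zu/2)$ turns $-z$ into $-w/(1+uw)$.
\item $L_n$ kills $(R)$. This is clear from the displayed product: $w\mapsto -w/(1+uw)$ inverts each factor.
\item $\iota_E$ is injective, as the evaluation of symmetric functions in the independent variables $y_i$.
\end{itemize}

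The only real obstacle is bookkeeping with signs and the $\sqrt{B^\vee}$ convention, namely whether the $x_i$ go to the roots of $E$ or of $E^\vee$. The substitution computation above fixes this choice.
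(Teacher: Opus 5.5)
Your proof is correct and is essentially the paper's argument. The paper checks the square on the generating series by composing virtual-bundle pullbacks, $L_n\colon K\mapsto A^\vee\otimes B\ominus A$ and $M\colon Q\mapsto K\otimes\sqrt{B^\vee}$, so that $L_n(M(Q))=E^\vee\ominus E$; your substitution of $w=z/(1-zu/2)$ into $\prod_i(1+(u-\alpha_i)w)/(1+\alpha_i w)$ is the same verification carried out in Chern roots, with your checks that $M$ and $L_n$ respect the relations (which the paper leaves implicit) and your sign $y_i=u/2-\alpha_i$ for the roots of $E^\vee$ consistent with \eqref{eq:lnq}.
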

\begin{proof}
	The map $\iota_E$ is injective, since after passing to Chern roots it is
	induced by the invertible change of variables $x_i=\alpha_i-u/2$, where
	$\alpha_i$ are the Chern roots of $A$.
	
	For commutativity, interpret the maps as pulling back virtual bundles. Write  $Q=S-S^\vee$, i.e.\ $Q_i=c_i(Q)\in \Ga$. Then
	\[
	M:Q\mapsto K\otimes \sqrt{B^\vee},\qquad L_n:K\mapsto (A^\vee\otimes B\ominus A)
	\]
	Then note that:
	\begin{equation}\label{eq:MPW}
	L_n(M(Q))=(A^\vee\otimes B\ominus A)\otimes \sqrt{B^\vee}=E^\vee\ominus E
	\end{equation}
	as required.
\end{proof}
We introduce the analogue of the transformation $L_n$ for $Q$-polynomials.
\begin{definition} Let $\mathcal{L}_n:=L_n\circ M$ and $\tilde K_\eta:=M^{-1}K_\eta$.
\end{definition}	
Then we have 
\begin{equation}\label{eq:L-tildeL}
	L_n(K_\eta)=\mathcal{L}_n(\tilde K_\eta)
\end{equation}
i.e.\ $K_\eta$ and $\tilde{K}_\eta$ encode the same Legendre Thom polynomial in different bases, and	
\begin{equation} \label{eq:lnq}
\mathcal{L}_n(Q_\lambda)=Q_\lambda\!\left(\frac{u}{2}-\alpha_1,\ldots,
\frac{u}{2}-\alpha_n\right),
\end{equation}
where $\alpha_i$ are the Chern roots of $A$, i.e. $a_i=e_i(\alpha_1,\dots,\alpha_n)$. Note that this specialization extends from $\Ga_n$ to the whole ring of symmetric polynomials (together with $\mathcal{L}_n(t)=-u/2$).

\subsection{Factorization for \texorpdfstring{$Q$}{Q}-polynomials}
\label{sec:thm:q-factorization-formula}

In this section we record the analogue of the usual Schur factorization formula Theorem \ref{thm:schur-factorization} for
Schur $Q$-polynomials.  Let
\[
\delta_n=(n,n-1,\ldots,2,1)
\]
denote the staircase partition. The image $\pi_n(Q_\lambda)$ behaves differently depending on the shape of the partition $\lambda$. So to express $\pi_n(Q_\lambda)$ we introduce the following:

\begin{definition}\label{def:q-thick}
	Let $n$  be fixed. Then strict partitions fall into three distinct categories: 
	\begin{enumerate}
		\item If $\delta_n \not\subset \lambda$ then we call $\lambda$ \emph{thin}.
		\item If $\delta_n \subset \lambda$ but $\de_{n+1}\not\subset\lambda $ then we call $\lambda$ \emph{nice}. 
		\item If  $\de_{n+1}\subset\lambda $ then we call $\lambda$ \emph{thick}.	
	\end{enumerate}
\end{definition}
	These names refer to the shape of the Young diagrams, analogously to Definition \ref{def:thick}. Notice that a nice partition can be uniquely written in the form  $(\delta_n +\nu)$, where $l(\nu)\le n$. Then the analogue of Theorem \ref{thm:schur-factorization} is the following:

\begin{theorem}\label{thm:q-factorization}\mbox{}   Let $\lambda$ denote strict partitions. The ring homomorphisms $\pi_n$ satisfy the following properties:
	\begin{enumerate}
		\item[(i)\,] (thick property) $\ker (\pi_n)=\Z\langle Q_\lambda: \lambda \text{ is thick}\rangle$, where $\langle\ \rangle$ means the generated $\Z$-module.
		\item[(ii)] (nice  property) For a nice $\lambda=(\delta_n +\nu)$,
		\[ 
		\pi_n(Q_\lambda)
		=\pi_n(Q_{\delta_n})s_\nu(x_1,\dots,x_n),\]
		where since $s_\nu\not\in \Ga_n$, part of the statement is that the product is an element of $ \Ga_n$. Note that
		\[
		\pi_n(Q_{\delta_n})=Q_{\delta_n}(x_1,\ldots,x_n)
		=\prod_{1\le i\le j\le n}(x_i+x_j).
		\]
		
		\item[(iii)] (thin property) Suppose that for $P\in \Gamma$ we have $\pi_n(P)=\pi_n(Q_{\delta_n})R$. Then in the $Q$-basis all thin coefficients of P are zero.
	\end{enumerate}
\end{theorem}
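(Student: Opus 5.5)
The approach is to translate the three shape conditions into conditions on the length $\ell(\lambda)$, and then reduce everything to the classical structure of $\Gamma_n$ (Macdonald III.\S 8, Pragacz). For a strict partition, $\delta_n\subset\lambda$ holds iff $\lambda_i\ge n+1-i$ for $i\le n$. Strictness forces $\lambda_i\ge \ell(\lambda)+1-i$, so this holds iff $\ell(\lambda)\ge n$. Hence thin means $\ell(\lambda)<n$, nice means $\ell(\lambda)=n$, and thick means $\ell(\lambda)>n$. Moreover, a nice $\lambda$ is exactly $\delta_n+\nu$ with $\nu$ an arbitrary partition of length $\le n$.

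For (i) I will invoke two standard facts about the $Q$-polynomials in $n$ variables:
\begin{itemize}
\item $Q_\lambda(x_1,\dots,x_n)=0$ whenever $\ell(\lambda)>n$;
\item the $Q_\lambda(x_1,\dots,x_n)$ with $\ell(\lambda)\le n$ form a $\Z$-basis of $\Gamma_n$ (Macdonald III.8.9).
\end{itemize}
Since the $Q_\lambda$ form a $\Z$-basis of $\Gamma$, the kernel of $\pi_n$ is then exactly the span of the thick $Q_\lambda$.

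For (ii) I will use $Q_\lambda=2^{\ell(\lambda)}P_\lambda$ together with the explicit formula, valid for $\ell(\lambda)=n$:
\[
P_\lambda(x_1,\dots,x_n)=\sum_{w\in S_n} w\Big(x^\lambda\prod_{i<j}\frac{x_i+x_j}{x_i-x_j}\Big).
\]
The factor $\prod_{i<j}(x_i+x_j)$ is symmetric, so it can be pulled out of the sum. What remains is the bialternant formula, giving
\[
P_\lambda=\prod_{i<j}(x_i+x_j)\, s_{\lambda-\delta_{n-1}}(x),
\]
where $\delta_{n-1}=(n-1,\dots,1,0)$. For $\lambda=\delta_n+\nu$ we have $\lambda-\delta_{n-1}=\nu+(1^n)$, and therefore $s_{\lambda-\delta_{n-1}}=e_n\,s_\nu=x_1\cdots x_n\,s_\nu$. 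Multiplying by $2^n$ turns $2^n x_1\cdots x_n$ into $\prod_i(x_i+x_i)$. Combining this with the product over $i<j$ gives $\prod_{i\le j}(x_i+x_j)\,s_\nu$. Taking $\nu=\emptyset$ identifies this prefactor with $\pi_n(Q_{\delta_n})$, which proves (ii).

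For (iii), write $P=P_{\rm thin}+P_{\rm nice}+P_{\rm thick}$ according to the $Q$-basis. Apply $\pi_n$ and then set $x_n=0$.
\begin{itemize}
\item The right-hand side $\pi_n(Q_{\delta_n})R$ vanishes, because the factor $2x_n$ appears in $\prod_{i\le j}(x_i+x_j)$.
\item Each nice term also vanishes, since by (ii) it is a multiple of $\pi_n(Q_{\delta_n})$.
\item Each thick term vanishes by (i).
\end{itemize}
Specializing $x_n=0$ in $Q_\lambda(x_1,\dots,x_n)$ gives $Q_\lambda(x_1,\dots,x_{n-1})$, as is clear from the generating function \eqref{eq:xi}. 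We are therefore left with $\pi_{n-1}(P_{\rm thin})=0$. Thin means $\ell(\lambda)\le n-1$, so by the basis statement for $\Gamma_{n-1}$ all thin coefficients vanish.

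The main obstacle is really the citation step: the vanishing and basis facts and the explicit formula for $P_\lambda$ with $\ell(\lambda)=n$. Given those, everything else is bookkeeping. If a self-contained argument for the formula were required, I would first try to derive it from the Pfaffian definition by induction on $n$.
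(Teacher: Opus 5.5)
Your proof is correct, but it is organized differently from the paper's.

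\textbf{The paper's route.} The paper cites Pragacz for (i) and Stanley for (ii). For (iii) it only sketches two facts, both read off from the unitriangular Schur expansion $Q_\lambda=2^{\ell(\lambda)}s_\lambda+\sum_{\mu<\lambda}b_{\lambda\mu}s_\mu$ (Macdonald III (2.6) at $t=-1$):
\begin{enumerate}
\item the $Q_\lambda(x_1,\dots,x_n)$ with $\ell(\lambda)\le n$ are linearly independent;
\item those with $\ell(\lambda)>n$ vanish.
\end{enumerate}
These are exactly the two facts you quote for (i). Statement (iii) is then meant to follow, as in the Schur case, by combining them with (ii). A symmetric multiple $\pi_n(Q_{\delta_n})R$ with $R=\sum d_\nu s_\nu$ equals $\pi_n\bigl(\sum d_\nu Q_{\delta_n+\nu}\bigr)$. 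Independence in $n$ variables then kills the thin coefficients.

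\textbf{Your route.} You make the dictionary thin/nice/thick $\Leftrightarrow$ $\ell(\lambda)<n$, $\ell(\lambda)=n$, $\ell(\lambda)>n$ explicit; the paper uses it only tacitly. You give a self-contained proof of Stanley's factorization (ii) from the symmetrization formula for $P_\lambda$ and the bialternant formula. You prove (iii) by setting $x_n=0$:
\begin{itemize}
\item the factor $2x_n$ of $\prod_{i\le j}(x_i+x_j)$ kills the right-hand side and all nice terms;
\item thick terms vanish;
\item the stability $Q_\lambda(x_1,\dots,x_{n-1},0)=Q_\lambda(x_1,\dots,x_{n-1})$ reduces everything to linear independence in $\Gamma_{n-1}$.
\end{itemize}

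\textbf{What each buys.} Your argument needs only the easy direction of (ii), namely that nice images are multiples of $\pi_n(Q_{\delta_n})$. It does not need that every such multiple comes from a nice combination, nor that the quotient $R$ is symmetric. It does use, as the statement intends, that $R$ is a polynomial. The paper's route gets independence and vanishing in one stroke from a single triangularity statement. It also keeps (iii) a purely formal consequence of (i) and (ii), parallel to the Schur factorization theorem.
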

\begin{proof} The description of the kernel was given by Pragacz in \cite[Proposition 2.2]{Pragacz1991} The $Q$-factorization formula (ii) is due to Stanley \cite{Stanley1984Problem4}.
	
	For lack of a reference, we sketch a proof of (iii) 
	
	From \cite[ Ch.~III, \S 2, (2.6)]{MacDonald1995} (with $t=-1$ \cite[Ch.~III, \S 8]{MacDonald1995}) we have
	\[
	P_\lambda(x_1\stb x_n)
	=
	s_\lambda(x_1\stb x_n)+\sum_{\substack{\mu<\lambda,\\ |\mu|=|\lambda|}} w_{\lambda\mu}s_\mu(x_1\stb x_n),
	\qquad w_{\lambda\mu}\in\mathbb Z,
	\]
	with respect to the dominance order. We have $P_\lambda=2^{-\ell(\lambda)}Q_\lambda$, so
	\begin{equation} \label{eq:Q-in-Schur}
		Q_\lambda(x_1\stb x_n)
		=
		2^{\ell(\la)}s_\lambda(x_1\stb x_n)+\sum_{\substack{\mu<\lambda,\\ |\mu|=|\lambda|}} b_{\lambda\mu}s_\mu(x_1\stb x_n),
		\qquad b_{\lambda\mu}\in\mathbb Z.
	\end{equation}
	Since the Schur polynomials
	$s_\mu(x_1,\ldots,x_n)$, $\ell(\mu)\le n$, form a $\mathbb Z$-basis of
	symmetric polynomials in $n$ variables, the polynomials
	$P_\lambda(x_1,\ldots,x_n)$, $\ell(\lambda)\le n$, are linearly
	independent. Using that $Q_\lambda=2^{\ell(\lambda)}P_\lambda$ for strict $\lambda$, gives linear independence for $\ell(\la)\leq n$.
	
	The vanishing for $\ell(\la)>n$ follows from the same Schur expansion: only $\mu$ dominated
	by $\lambda$ occur; hence $\ell(\mu)\ge \ell(\lambda)>n$, so every
	$s_\mu(x_1,\ldots,x_n)$ vanishes.
\end{proof}

The $Q$-factorization Theorem \ref{thm:q-factorization} and \eqref{eq:lnq} implies the following
\begin{corollary}  \label{l-factorization}
	Let $\lambda$ denote strict partitions. The ring homomorphisms $\mathcal{L}_n$ satisfy the following properties:
	\begin{enumerate}
		\item[(i)\,] (thick property) $\ker (\mathcal{L}_n)=\Z[t]\langle Q_\lambda: \lambda \text{ thick}\rangle$, where $\langle\ \rangle$ means the generated $\Z[t]$-module.
		\item[(ii)] (nice  property) For a nice strict partition $\lambda=(\delta_n +\nu)$
		\[ 
		\mathcal{L}_n(Q_\lambda)
		=\mathcal{L}_n(Q_{\delta_n})\mathcal{L}_n(s_\nu).\]
		Note that
		
		\[
		\mathcal{L}_n(Q_{\delta_n})=
		\prod_{1\le i\le j\le n}(u-\al_i-\al_j),
		\]
		which is the $\GL(n)\times\GL(1)$-equivariant Euler class of $\pol^2(\C^n)$.
		
		\item[(iii)] (thin property) Suppose that for $P\in \Gamma[t]$ we have $\mathcal{L}_n(P)=\mathcal{L}_n(Q_{\delta_n})R$. Then in the $Q$-basis all thin coefficients of P are zero.
	\end{enumerate}
\end{corollary}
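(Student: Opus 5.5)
The corollary should follow by transporting Theorem \ref{thm:q-factorization} through the specialization \eqref{eq:lnq}. By Proposition \ref{prop:inQ}, $\mathcal{L}_n = L_n\circ M = \iota_E\circ(\pi_n\otimes\id_{\Z[t]})$, and $\iota_E$ is injective. So every statement about $\mathcal{L}_n$ on $\Gamma[t]$ reduces to the corresponding statement about $\pi_n\otimes\id$ on $\Gamma[t]=\Gamma\otimes\Z[t]$, followed by the injective substitution $x_i\mapsto u/2-\alpha_i$, $t\mapsto -u/2$.

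\emph{Thick property.} Since $\iota_E$ is injective, $\ker\mathcal{L}_n=\ker(\pi_n\otimes\id_{\Z[t]})$. Write an element of $\Gamma[t]$ as $\sum_j P_j t^j$ with $P_j\in\Gamma$. Its image is $\sum_j \pi_n(P_j)t^j$ in $\Gamma_n[t]$, and this vanishes if and only if each $\pi_n(P_j)=0$, because $t$ is a free variable. By Theorem \ref{thm:q-factorization}(i), each $P_j$ then lies in the $\Z$-span of the thick $Q_\lambda$. Hence the kernel is the $\Z[t]$-module generated by the thick $Q_\lambda$.

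\emph{Nice property.} Apply $\iota_E$ to Theorem \ref{thm:q-factorization}(ii). Since $\iota_E$ is a ring homomorphism and $\pi_n(Q_{\delta_n})s_\nu(x)$ is a product of symmetric polynomials in $x_1,\dots,x_n$, we get
\[
\mathcal{L}_n(Q_\lambda)=\iota_E(\pi_n(Q_{\delta_n}))\cdot s_\nu\big(\tfrac u2-\alpha_1,\dots,\tfrac u2-\alpha_n\big).
\]
Two small points need care here:
\begin{itemize}
\item $s_\nu\notin\Gamma_n$, so $\mathcal{L}_n(s_\nu)$ must be read through the extension of the specialization \eqref{eq:lnq} to all symmetric polynomials, as noted after \eqref{eq:lnq}.
\item $\iota_E$ must extend to the full symmetric polynomial ring by the same substitution; this is clear from its Chern-root description.
\end{itemize}
For the Euler class, substitute $x_i=u/2-\alpha_i$ into $\prod_{i\le j}(x_i+x_j)$ to get $\prod_{i\le j}(u-\alpha_i-\alpha_j)$. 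The factors $u-\alpha_i-\alpha_j$ are exactly the weights of $\pol^2(\C^n)^\vee\otimes\C_u$, i.e.\ of quadratic forms with values in the $\GL(1)$-line. That identifies the product as the stated equivariant Euler class.

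\emph{Thin property.} Suppose $\mathcal{L}_n(P)=\mathcal{L}_n(Q_{\delta_n})R$, and write $P=\sum_j P_jt^j$. The main obstacle is that $R$ is a priori only a polynomial in $a_i$ and $u$, not visibly of the form needed to apply Theorem \ref{thm:q-factorization}(iii) coefficientwise in $t$. I would first invert the substitution, which is an isomorphism $\Z[a,u/2]\cong\Z[x_1,\dots,x_n]^{S_n}[t]$ after inverting 2 where necessary. This writes $R=\sum_j R_jt^j$ with $R_j$ symmetric in $x$. Comparing coefficients of $t^j$ then gives $\pi_n(P_j)=\pi_n(Q_{\delta_n})R_j$ for each $j$. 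Theorem \ref{thm:q-factorization}(iii) now kills the thin coefficients of each $P_j$, and hence of $P$.

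The factor $1/2$ in $u/2$ needs attention. Either work over $\Z[1/2]$, where the thin-coefficient conclusion is unaffected because it is a vanishing statement, or note that the target ring is $\Z[a_1,\dots,a_n,u/2]$, so that the substitution is already integral.
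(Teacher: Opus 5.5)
Your proposal is correct and follows the paper's route. The paper obtains the corollary directly from Theorem \ref{thm:q-factorization} together with the specialization \eqref{eq:lnq}. You make that transport explicit via $\mathcal{L}_n=\iota_E\circ(\pi_n\otimes\id_{\Z[t]})$, and you use that $x_i\mapsto u/2-\alpha_i$, $t\mapsto -u/2$ is an integral isomorphism $\Z[x_1,\dots,x_n]^{S_n}[t]\cong\Z[a_1,\dots,a_n,u/2]$ to argue coefficientwise in $t$ for (i) and (iii).

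There is only a notational slip in the Euler class remark: in the paper's convention $\pol^2(\C^n)$ already denotes quadratic forms with values in the $\GL(1)$-line, so its weights are $u-\alpha_i-\alpha_j$ without any further dualizing or twisting.
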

This corollary will be our main tool to (partially) calculate the Legendre Thom polynomial $\tilde K_\eta$ from $[\eta(n,1)]$ for various singularities.
\begin{remark}[Stable avoiding ideal]
	From another point of view, $(n,p)$-thick partitions are elements of the stable avoiding ideal of $\Si^n(l)$, for $l=p-n$ (for these notions, we refer to \cite{Pragacz1988}, \cite[Thm 2.3]{FeherRimanyi2004} and \cite[\S 4]{FeherMatszangosz2025}). Analogously, the corank $\geq {n+1}$ Legendre maps have Legendre Thom polynomial $Q_{\de_{n+1}}$. Furthermore, Pragacz in \cite[Thm 1.1]{Pragacz1996} showed that the symmetric degeneracy locus of corank $n+1$ has stable avoiding ideal generated additively as
	\[\mathcal{A}^s_{\geq n+1}=\Z\bra Q_\mu:\mu\supset \de_{n+1}\ket=\ker \pi_n\]
	where the second equality is the content of Theorem \ref{thm:q-factorization} (i). So while the Thom polynomial $[\eta(n,p)]$ determines the stable Thom polynomial up to elements of the stable avoiding ideal $\Si^n(l)$, the Thom polynomial of $[\eta(n,1)]$ determines the Legendre Thom polynomial up to elements of the stable avoiding ideal of corank $\geq n+1$ symmetric maps.	
\end{remark}
\section{Thom polynomials of binary, multi-binary and ternary singularities}\label{sec:binary}
In this section we compute the unstable Thom polynomials $[\eta(n,1)]$ of binary, multi-binary  and ternary singularities, and identify these with Thom polynomials of contact singularities. These Thom polynomials $[\eta(n,1)]$ are not always sufficient to deduce the stable Thom polynomial $\Tp(\eta(1-n))$, and especially the entire family of stable Thom polynomials $\{\Tp(\eta(-r))\}$, but it is a first step. On the other hand, this is another class of examples which gives an infinite family of Thom polynomials in negative codimension.

\subsection{Binary homogeneous singularities}

Recall from Section \ref{sec:singularitytypes} that the binary homogeneous singularities $\eta_\la$ are defined as the image of the coincident root stratum-closure $\clos{Y_\la}$ on $\Pol^k(\C^2)$ via the inclusion
\[i:\pol^k(\C^2)\to J^k(2,1).\]

The $\GL(2)$-equivariant cohomology classes of the strata $Y_\lambda$ are known (see \cite{FeherNemethiRimanyi}), and the $\GL(2)\times \GL(1)$-equivariant cohomology classes $[Y_\lambda\subset \pol^k(\C^2)]$ can be calculated by the substitution 
\begin{equation}\label{eq:root_subs}
[Y_\lambda\subset \pol^k(\C^2)]_{\GL_2\times\GL_1}=[Y_\lambda\subset \pol^k(\C^2)]_{\GL_2}\bigg(\alpha_i\mapsto u/|\lambda|-\alpha_i\bigg)	
\end{equation}
in the Chern roots $\al_i$ of the rank 2 bundle. By identifying the cohomology of $H_{\mathcal{K}}$ with $H_{\GL_2\times \GL_1}$, and since $i_!1=e(J^{k-1}(2,1))$, 
we have the following Proposition.
\begin{proposition}\label{prop:binary}
\begin{equation}
	[\eta_\lambda(2,1)]=e(J^{k-1}(2,1))[Y_\lambda]_{\GL_2\times \GL_1}.
\end{equation}	
\end{proposition}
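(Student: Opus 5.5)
The plan is to compute the class of $\eta_\lambda(2,1)=i(\overline{Y}_\lambda)$ in two stages: push forward the class of $\overline{Y}_\lambda$ along the linear inclusion $i$, then identify the equivariant parameters of $\pol^k(\C^2)$ with those of the contact group. First, a $k$-determined contact singularity in $J^k(2,1)$ has the same $\mathcal{K}$-equivariant class as its $\GL_2\times\GL_1$-equivariant class. The reason is that $\mathcal{K}^k(2,1)$ retracts onto its linear part $\GL_2\times\GL_1$, acting on the source by linear substitution and on the target by scaling. So I will work $\GL_2\times\GL_1$-equivariantly throughout, with $a_i$ the Chern classes of the source $\C^2$ and $u=b_1$.

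Next I use that $i(\pol^k(\C^2))$ is the linear subspace $\{f: j^{k-1}f=0\}$ of the vector space $J^k_0(2,1)=\bigoplus_{d=1}^k\Hom(\Sym^d\C^2,\C)$. Its complement is the representation $\bigoplus_{d<k}\Hom(\Sym^d\C^2,\C)$, which I denote $J^{k-1}(2,1)$ as in the statement. For a $G$-invariant subvariety $Z$ of a subrepresentation $W\subset V$, the class $[Z\subset V]$ equals $[Z\subset W]\cdot e(V/W)$. This holds because $[W\subset V]=e(V/W)$ and the pushforward $i_!$ is multiplication by this Euler class, as in the proof of Proposition \ref{prop:sigma-n-ad-k}. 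Applied to $Z=\overline{Y}_\lambda$, this gives $[\eta_\lambda(2,1)]=e(J^{k-1}(2,1))\,[\overline{Y}_\lambda\subset\pol^k(\C^2)]_{\GL_2\times\GL_1}$. I also have to check that $\eta_\lambda$ is indeed $\mathcal{K}$-invariant, so that its $\mathcal{K}$-class makes sense. As already noted in Section \ref{sec:singularitytypes}, non-linear contact transformations only change degree $>k$ terms when $j^{k-1}f=0$.

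It then remains to justify the substitution \eqref{eq:root_subs} for the $\GL_2\times\GL_1$-class of $\overline{Y}_\lambda$. As a representation, $\pol^k(\C^2)=\Hom(\Sym^k A,B)=\Sym^k A^\vee\otimes B$. Passing to a torus cover, write $B=L^{\otimes k}$ formally, with $c_1(L)=u/k$. Then $\Sym^k A^\vee\otimes B=\Sym^k(A^\vee\otimes L)$, with $k=|\lambda|$. The coincident root loci are defined intrinsically for $\Sym^k$ of a rank 2 bundle, so their class is the $\GL_2$-class evaluated at the Chern roots $u/|\lambda|-\alpha_i$ of $A^\vee\otimes L$. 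I expect the main obstacle to be making this rational twist rigorous, since $u/k$ is not integral. I would handle it by pulling back along the $k$-fold cover $\GL_1\to\GL_1$, $z\mapsto z^k$, which is injective on rational equivariant cohomology. The resulting formula is polynomial with integer coefficients, so it holds integrally because the equivariant cohomology here is torsion-free.
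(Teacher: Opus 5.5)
Your argument is correct and matches the paper's proof: identify $H^*_{\mathcal K}$ with $H^*_{\GL_2\times\GL_1}$ using the retraction of the contact group onto its linear part, then observe that pushing forward along the linear inclusion $i$ multiplies by $i_!1=e(J^{k-1}(2,1))$. Your final paragraph, which justifies the substitution \eqref{eq:root_subs} via the $k$-fold cover of $\GL_1$, is not needed for the statement itself (the paper only asserts that substitution before the proposition), but it is a correct and welcome supplement.
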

\begin{corollary}\label{cor:1k}
	\[ [\eta_{(1^k)}(2,1)]=e(J^{k-1}(2,1))=\prod_{i=1}^{k-1}e(\pol^i(\C^2))=\prod_{1\leq i+j<k} (u-i\alpha_1-j\al_2),\]
	where $u$ is the generator of $H^*_{\GL(1)}$ and $\alpha_1,\al_2$ are the source Chern roots. 
\end{corollary}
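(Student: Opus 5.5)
The plan is to specialize Proposition \ref{prop:binary} to $\lambda=(1^k)$ and then expand the Euler class of the jet space into its homogeneous pieces.

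First, the coincident root stratum $Y_{(1^k)}$ consists of the degree $k$ binary forms with all roots simple. It is therefore open and dense in $\pol^k(\C^2)$, so its closure is the whole space and $[\overline{Y}_{(1^k)}\subset\pol^k(\C^2)]=1$ in any equivariant theory. In particular the substitution \eqref{eq:root_subs} leaves it equal to $1$. Proposition \ref{prop:binary} then gives $[\eta_{(1^k)}(2,1)]=e(J^{k-1}(2,1))$. Equivalently, $\eta_{(1^k)}$ is the linear subspace of jets with vanishing $(k-1)$-jet, which is the proof idea of Proposition \ref{prop:sigma-n-ad-k}.

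Second, I will compute the equivariant Euler class. Here $J^{k-1}(2,1)$ means the jets without constant term, so the first factor is $e(J^1(2))$, consistent with Theorem \ref{thm:kazarian}. As a $\GL(2)\times\GL(1)$-representation this space splits as
\[\bigoplus_{d=1}^{k-1}\Hom(\Sym^d\C^2,\C)=\bigoplus_{d=1}^{k-1}\pol^d(\C^2).\]
The Euler class is multiplicative, which gives the middle equality.

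Third, I will identify the weights. With source Chern roots $\alpha_1,\alpha_2$ and target root $u$, the monomial $x_1^ix_2^j$ of degree $d=i+j$, viewed as a coordinate on $\Hom(\Sym^d\C^2,\C)$, has weight $u-i\alpha_1-j\alpha_2$. This matches $e(J^1(n))=\sum(-1)^ia_iu^{n-i}$ from Theorem \ref{thm:kazarian} for $d=1$. Taking the product over $1\le i+j\le k-1$ gives the final formula.

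The only delicate point is a sign and duality convention: one must check that the weights are $u-i\alpha_1-j\alpha_2$ rather than $i\alpha_1+j\alpha_2-u$. I will fix this by comparing with the known linear factor $e(J^1(2))=u^2-ua_1+a_2=(u-\alpha_1)(u-\alpha_2)$.
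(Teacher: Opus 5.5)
Your proposal is correct and matches the paper's proof, which is just the observation that $Y_{(1^k)}$ is the open stratum of $\pol^k(\C^2)$, so its class is $1$ and Proposition~\ref{prop:binary} gives $e(J^{k-1}(2,1))$. Your splitting of $J^{k-1}(2,1)$ into $\bigoplus_{d=1}^{k-1}\pol^d(\C^2)$ and the weight identification $u-i\alpha_1-j\alpha_2$, checked against $e(J^1(2))=(u-\alpha_1)(u-\alpha_2)$, correctly spell out the remaining equalities that the paper leaves implicit.
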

\begin{proof}
	$Y_{(1^k)}$ is the open stratum in $\Pol^k(\C^2)$.
\end{proof}
Notice that $\eta_{(1^k)}(2,1)=\Sigma^{2^{k-1}}(2,1)$, so this formula is also a special case of Proposition \ref{prop:sigma-n-ad-k}.
\begin{corollary} \label{cor:21k}
		\[ [\eta_{(2,1^{k-2})}(2,1)]=e(J^{k-1}(2,1))(k-1)(2u-ka_1).\]
\end{corollary}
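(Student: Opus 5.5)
The plan is to apply Proposition~\ref{prop:binary} with $\lambda=(2,1^{k-2})$. The closure $\overline{Y}_{(2,1^{k-2})}\subset \Pol^k(\C^2)$ is the discriminant hypersurface $\{\Delta=0\}$, where $\Delta$ is the discriminant of binary forms of degree $k$. Because it is a hypersurface, the whole statement reduces to computing its $\GL_2\times\GL_1$-equivariant class. That class should come out as $(k-1)(2u-ka_1)$, and multiplying by $e(J^{k-1}(2,1))$ then gives the formula.

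To compute the class I would restrict to the maximal torus of $\GL_2\times\GL_1$. Choose coordinates on $\Pol^k(\C^2)$ by the coefficients $f_{ij}$ of the monomials $x^iy^j$ with $i+j=k$. In the Chern-root conventions already used in Corollary~\ref{cor:1k}, the torus weight attached to the coordinate $f_{ij}$ is $u-i\alpha_1-j\alpha_2$. I will use the standard fact that the equivariant class of the zero locus of a $T$-semi-invariant polynomial $D$ equals its $T$-weight, computed with the convention that $D=f_{ij}$ has weight $u-i\alpha_1-j\alpha_2$. This fact is just the additivity of $c_1$ for the line bundle that $D$ trivializes, and it is compatible with the coordinate hyperplanes.

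Given that fact, two properties of $\Delta$ suffice, and I would obtain both from weight-counting rather than an explicit formula.
\begin{enumerate}
\item $\Delta$ is homogeneous of degree $2k-2$ in the coefficients. This fixes the $u$-part of the weight as $(2k-2)u$.
\item $\Delta$ is isobaric, with every monomial having the same total $(\alpha_1,\alpha_2)$-weight, since it is a $T$-semi-invariant. The swap $x\leftrightarrow y$ preserves $\Delta$ up to sign, so this weight is symmetric, $-c(\alpha_1+\alpha_2)$. Each monomial of $\Delta$ is a product of $2k-2$ coefficients, and each coefficient contributes $i+j=k$ to $\alpha_1+\alpha_2$-degree. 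Hence the total is $(2k-2)k$, split equally, so $c=k(k-1)$.
\end{enumerate}
It follows that
\[ [\overline{Y}_{(2,1^{k-2})}]_{\GL_2\times\GL_1}=(2k-2)u-k(k-1)(\alpha_1+\alpha_2)=(k-1)(2u-ka_1). \]
Equivalently, one can start from the $\GL_2$-class $-k(k-1)a_1$ in \cite{FeherNemethiRimanyi} and apply the substitution \eqref{eq:root_subs}, $\alpha_i\mapsto u/k-\alpha_i$. This yields the same expression and serves as a consistency check.

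Finally, Proposition~\ref{prop:binary} multiplies this class by $e(J^{k-1}(2,1))$, which is the claim. The main point requiring care is the sign and duality convention: whether coordinates carry the weights $u-i\alpha_1-j\alpha_2$ or their negatives, and correspondingly the sign of the $\GL_2$ class before substitution. I would fix this by checking against the smallest case. For $k=2$ the hypersurface is $A_2(2,1)\subset J(2,1)$, so $e(J^1)(2u-2a_1)$ must equal $[A_2(2,1)]=e(J^1(2))L_2(k_1)$ from Theorem~\ref{thm:kazarian} with $K_{A_2}=k_1$. Since $L_2(k_1)=2u-2a_1$, the two agree, which confirms the conventions.
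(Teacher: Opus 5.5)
Your proof is correct. It uses the same outer structure as the paper (Proposition~\ref{prop:binary}), but computes the class of the stratum by a different route. The paper computes nothing at that step: it quotes $[Y_{(2,1^{k-2})}]_{\GL_2}=k(k-1)a_1$ from \cite{FeherNemethiRimanyi} and converts it to a $\GL_2\times\GL_1$-class with the substitution \eqref{eq:root_subs}. You instead read off the bi-equivariant class directly as the torus weight of the defining semi-invariant $\Delta$. The $u$-part comes from $\deg\Delta=2k-2$, and the $a_1$-part from isobaricity together with the $x\leftrightarrow y$ symmetry.

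Your argument is self-contained and produces the $u$- and $a_1$-parts at once, with no external reference and no need for the substitution trick. You should say explicitly that $\Delta$ is irreducible, hence reduced. Otherwise the class of the reduced hypersurface would only be a fraction of the weight of $\Delta$.

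The paper's route is uniform in $\lambda$, and yours is not. It works only because this stratum is a hypersurface cut out by a classical semi-invariant of known degree. It says nothing about higher-codimension strata such as $Y_{(3,1^{k-3})}$ or $Y_{(2,2,1^{k-4})}$, which need the resolution and localization computation of \cite{FeherNemethiRimanyi}.

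There is one slip in your consistency check. Substituting $\alpha_i\mapsto u/k-\alpha_i$ into $-k(k-1)a_1$ gives $-(k-1)(2u-ka_1)$, the negative of your expression. The value $-k(k-1)a_1$, which is your formula at $u=0$, is the class in the source-root convention, where $\Pol^k(\C^2)$ has weights $-i\alpha_1-j\alpha_2$. The substitution \eqref{eq:root_subs} is formulated in the convention with weights $i\alpha_1+j\alpha_2$. In that convention the class is $+k(k-1)a_1$, as in the paper; compare $[Y_{(2)}]_{\GL_2}=2c_1$ in Table~\ref{tab:binary-crl-strata}. So either use $+k(k-1)a_1$ with $\alpha_i\mapsto u/k-\alpha_i$, or $-k(k-1)a_1$ with $\alpha_i\mapsto \alpha_i-u/k$. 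Your $k=2$ comparison with $[A_2(2,1)]$ via Theorem~\ref{thm:kazarian} is the reliable check, and it does confirm the main computation.
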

\begin{proof}
	Since $[Y_{(2,1^{k-2})}]_{\GL_2}=k(k-1)a_1$ (see \cite{FeherNemethiRimanyi}), therefore by \eqref{eq:root_subs}, we have
	\[  [Y_{(2,1^{k-2})}]_{\GL(2)\times \GL(1)}=k(k-1)a_1|_{\alpha_i\mapsto u/k-\alpha_i}=(k-1)(2u-ka_1).\]	
\end{proof}

We recover some Thom polynomials and also obtain some new ones using this Proposition (cf.\ also Table \ref{fig:binary}). Since the values of $[Y_\la]_{\GL_2}$ can be computed for arbitrarily large $\la$ via the methods of \cite{FeherNemethiRimanyi}, this method allows one to compute the values of infinitely many Thom polynomials $[\eta_\la(2,1)]$; we do this up to $|\la|\leq 4$ in Table \ref{tab:binary-crl-strata}.

\begin{table}[htbp]
	\centering
	\small
	\setlength{\tabcolsep}{5pt}
	\renewcommand{\arraystretch}{1.18}
	\begin{tabularx}{\textwidth}{
			>{\centering\arraybackslash}m{2.1cm}
			>{\centering\arraybackslash}m{3.0cm}
			>{\centering\arraybackslash}m{2.6cm}
			Y}
		\toprule
		\textbf{Partition $\lambda$} &
		\textbf{Binary CRL class $[Y_\la]_{\GL_2}$} &
		\textbf{Arnold name} &
		\textbf{Legendre TP $\tilde{K}_\eta$ (for $Q_\la$ with $\ell(\la)\leq 2$)} \\
		\midrule

		$(1,1)$
		& $1$
		& $A_1$
		& \LTPcell{1} \\
		
		$(2)$
		& $2c_1$
		& $A_2$
		& \LTPcell{Q_1}\\
		
		$(1,1,1)$
		& $1$
		& $D_4$
		& \LTPcell{Q_{2,1}} \\
		
		$(2,1)$
		& $6c_1$
		& $D_5$
		& \LTPcell{6Q_{3,1} + 4tQ_{2,1}} \\
		
		$(3)$
		& $6c_1^2+3c_2$
		& $E_6$
		& \LTPcell{6Q_{4,1} + 9Q_{3,2} + 9tQ_{3,1} + 3t^2Q_{2,1}}\\
		
		$(1,1,1,1)$
		& $1$
		& $X_9$
		& \LTPcell{18Q_{5,2} + 27Q_{4,3} + 6tQ_{5,1} + 42tQ_{4,2} + 11t^2Q_{4,1}\\{} + 21t^2Q_{3,2} + 6t^3Q_{3,1} + t^4Q_{2,1}}
		\\
		
		$(2,1,1)$
		& $12c_1$
		& $X_{10}=X_{1,1}=T_{2,4,5}$
		& \LTPcell{216Q_{6,2} + 540Q_{5,3} + 72tQ_{6,1} + 792tQ_{5,2} + 828tQ_{4,3} + 204t^2Q_{5,1} \\{}+ 888t^2Q_{4,2} + 204t^3Q_{4,1} + 324t^3Q_{3,2} + 84t^4Q_{3,1} + 12t^5Q_{2,1}\\{}}\\
		
		$(2,2)$
		& $12c_1^2+16c_2$
		& $Y^1_{1,1}=T_{2,5,5}$
		& \LTPcell{216Q_{7,2} + 1044Q_{6,3} + 972Q_{5,4} + 72tQ_{7,1} + 1320tQ_{6,2} + 3192tQ_{5,3} \\{}+ 324t^2Q_{6,1} + 2516t^2Q_{5,2} 
			+ 2496t^2Q_{4,3} + 520t^3Q_{5,1} + 1936t^3Q_{4,2} \\{}+ 380t^4Q_{4,1} + 556t^4Q_{3,2} + 128t^5Q_{3,1} + 16t^6Q_{2,1}\\{}} \\
		
		$(3,1)$
		& $24c_1^2$
		& $Z_{11}$
		& \LTPcell{432Q_{7,2} + 1512Q_{6,3} + 1080Q_{5,4} + 144tQ_{7,1} + 2160tQ_{6,2} \\{}+ 4320tQ_{5,3} + 552t^2Q_{6,1} + 3768t^2Q_{5,2} + 3432t^2Q_{4,3} + 816t^3Q_{5,1} \\{}+ 2832t^3Q_{4,2} + 576t^4Q_{4,1} + 816t^4Q_{3,2} + 192t^5Q_{3,1} + 24t^6Q_{2,1}\\{}} \\
		
		$(4)$
		& $24c_1^3+32c_1c_2$
		& $W_{12}$
		& \LTPcell{
			432Q_{8,2} + 2520Q_{7,3} + 4032Q_{6,4} + 144tQ_{8,1} + 3216tQ_{7,2} \\{}+ 11112tQ_{6,3} + 8328tQ_{5,4} + 792t^2Q_{7,1} + 8320t^2Q_{6,2} + 16408t^2Q_{5,3} \\{}+ 1688t^3Q_{6,1} + 9944t^3Q_{5,2} + 8864t^3Q_{4,3} + 1800t^4Q_{5,1} \\{}+ 5744t^4Q_{4,2} + 1016t^5Q_{4,1} + 1368t^5Q_{3,2} + 288t^6Q_{3,1} + 32t^7Q_{2,1}
		}\\
		
		\bottomrule
	\end{tabularx}
	\caption{Binary coincident-root loci indexed by partitions of size at most $4$,
		their $GL_2$-equivariant classes, the corresponding
		generic Arnold contact singularities, and  their Legendre TP's. \emph{The values of the TP's are correct up to the ideal $(Q_\la:\ell(\la)\geq 3)$}; undetermined coefficients first appear for $X_9$, cf.\ Remark \ref{rmk:indeterminacy}.}
	\label{tab:binary-crl-strata}
\end{table}

\begin{theorem}
	Fix a binary singularity $\eta_\la$ corresponding to some binary stratum $Y_\la$. Then 
\begin{itemize}
	\item The value of the Thom polynomial $[\eta_\la(2,1)]$ is entirely determined by the binary class $[Y_\la]_{\GL_2}$ (see Table \ref{tab:binary-crl-strata}) using Proposition \ref{prop:binary} and \eqref{eq:root_subs}.
	\item  The Thom polynomial $[\eta_\la(2,1)]$ determines the value of the Legendre Thom polynomial $\tilde{K}_{\eta_\la}$ up to the ideal of $Q$-polynomials indexed by strict partitions of length $\geq 3$. 
	\item The Thom polynomial $[\eta_\la(2,1)]$ determines the value of the stable Thom polynomial $\Tp(\eta_\la(-1))$ up to the ideal of Schur polynomials indexed by thick partitions (i.e.\ partitions containing $(3,3)$). 
\end{itemize}
See Table \ref{tab:binary-crl-strata} for the values of the Legendre Thom polynomials for $\la$ up to size $4$ (up to the ideal described above). The values of the stable Thom polynomials up to $E_6$ are covered by Theorem \ref{thm:thom_series} (up to the ideal described above). 
\end{theorem}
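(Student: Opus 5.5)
The three claims are obtained by combining results already in place. No new computation is needed beyond the binary classes $[Y_\la]_{\GL_2}$ quoted from \cite{FeherNemethiRimanyi}.

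\emph{First claim.} I apply Proposition \ref{prop:binary}. The singularity $\eta_\la(2,1)$ is the image under the linear inclusion $i:\pol^k(\C^2)\to J^k(2,1)$ of $\overline{Y_\la}$, with $k=|\la|$. Since $i$ is a $\GL_2\times\GL_1$-equivariant linear embedding whose normal space is the lower-order jet space, we have $i_!1=e(J^{k-1}(2,1))$. Hence
\[
[\eta_\la(2,1)]=e(J^{k-1}(2,1))\cdot[Y_\la]_{\GL_2\times\GL_1}.
\]
The last factor is obtained from the $\GL_2$-class by the substitution \eqref{eq:root_subs}. This substitution reflects that the $\GL_1$ factor acts on $\pol^k(\C^2)$ by the weight $u$, which can be absorbed into the source torus by shifting each source root by $u/k$. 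So $[\eta_\la(2,1)]$ is a function of $[Y_\la]_{\GL_2}$ alone. The one point to verify is that the Thom polynomial of the contact-invariant set $\eta_\la$ can be computed $\GL_2\times\GL_1$-equivariantly, because $H^*_{\mathcal{K}}\cong H^*_{\GL_2\times\GL_1}$. This holds since the unipotent part of $\mathcal{K}$ is contractible.

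\emph{Second claim.} By Kazarian's Theorem \ref{thm:kazarian} together with \eqref{eq:L-tildeL}, we have
\[
[\eta_\la(2,1)]=e(J^1(2))\,\mathcal{L}_2(\tilde K_{\eta_\la}).
\]
Since $e(J^1(2))$ is a nonzero divisor in the polynomial ring, $[\eta_\la(2,1)]$ determines $\mathcal{L}_2(\tilde K_{\eta_\la})$. By Corollary \ref{l-factorization}(i), the kernel of $\mathcal{L}_2$ is the $\Z[t]$-span of the $Q_\mu$ with $\mu\supset\delta_3=(3,2,1)$. For strict partitions, $\mu\supset(3,2,1)$ holds exactly when $\ell(\mu)\ge 3$. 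Indeed, a strict partition of length at least $3$ has $\mu_3\ge1$, $\mu_2\ge2$ and $\mu_1\ge3$. So the coefficients of $Q_\mu$ with $\ell(\mu)\le 2$, as elements of $\Z[t]$, are determined. The subtle point is that $\mathcal{L}_2$ is injective on the $\Z[t]$-span of the $Q_\mu$ with $\ell(\mu)\le 2$. This is exactly the kernel statement, combined with $\mathcal{L}_2(t)=-u/2$ being algebraically independent of the $\alpha_i$: after the change of variables $x_i=u/2-\alpha_i$, the map becomes $\pi_2\otimes\Z[t]$.

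\emph{Third claim.} By Damon's Theorem \ref{damon},
\[
\rho^{2\to1}\Tp(\eta_\la(-1))=[\eta_\la(2,1)].
\]
Theorem \ref{thm:schur-factorization}(i) identifies $\ker\rho^{2\to1}$ with the span of the $(2,1)$-thick Schur classes, namely those $s_\mu$ with $\mu\supset 3^2=(3,3)$. Therefore $\Tp(\eta_\la(-1))$ is determined modulo this span. Since $[\eta_\la(2,1)]$ is divisible by $e(J^1(2))=\rho^{2\to1}(s_{2,2,\ldots})$, the thin property (iii) additionally forces all thin coefficients to vanish. The nice coefficients can then be read off using (ii).

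The only step that is more than bookkeeping is the injectivity and independence-of-$t$ argument in the second claim. I would settle it through Proposition \ref{prop:inQ}, which identifies $\mathcal{L}_2$ with $\iota_E\circ(\pi_2\otimes\mathrm{id})$ where $\iota_E$ is injective. The table entries themselves would then follow by computer expansion of these formulas, which is routine.
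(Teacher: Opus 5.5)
Your proposal is correct and follows the same route as the paper, which proves the theorem by working through the cases $\la=(1,1,1),(2,1),(2,1,1)$ exactly as you describe: Proposition~\ref{prop:binary} with \eqref{eq:root_subs} for $[\eta_\la(2,1)]$, Kazarian's Theorem~\ref{thm:kazarian} with Corollary~\ref{l-factorization} for $\tilde K_{\eta_\la}$, and Damon's Theorem~\ref{damon} with Theorem~\ref{thm:schur-factorization} for $\Tp(\eta_\la(-1))$. One small slip: for $(n,p)=(2,1)$ the box $n^p$ is the one-row partition $(2)$, so $e(J^1(2))=\rho^{2\to1}(s_{2})$ rather than $\rho^{2\to1}(s_{2,2,\ldots})$, and the thin partitions are therefore the single columns $1^d$; this only affects your side remark about vanishing thin coefficients, not the three claims.
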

	We illustrate the straightforward proof on three examples, $\la=(1,1,1)$,  $\la=(2,1)$ and $\la=(2,1,1)$; the proof is the same for all other singularities by taking the appropriate value of $[Y_\la]$.
\begin{example} \label{ex111}
	For $\la=(1,1,1)$ we have $ \eta_{1,1,1}=D_4$ and 
\[ [\eta_{1,1,1}(2,1)]=e(\pol^1(\C^2))e(\pol^2(\C^2)).\]
Since
\[ e(\pol^2(\C^2))=(u-2\alpha_1)(u-\alpha_1-\al_2)(u-2\al_2)=u^3-3u^2a_1+u(4a_2+2a_1^2)-4a_1a_2,\]
using the factorization formulas we obtain 
\[[\eta_{1,1,1}(2,1)]=[D_4(2,1)]=\rho^{2\to1}(s_5+3s_{4,1}+2s_{3,1,1}+6s_{3,2}+4s_{2,2,1}),\]
but $\rho^{2\to1}$ is injective in this range so we obtained
the stable Thom polynomial. Also the Legendre Thom polynomial is determined because  $\mathcal{L}_2(Q_{2,1})=e(\pol^2(\C^2))$
and $\mathcal{L}_2$ is injective in this degree by Corollary \ref{l-factorization}, therefore $\tilde K_{\eta_{1,1,1}}=Q_{2,1}$.
\end{example}

\begin{example} \label{ex21}
	For $\la=(2,1)$, we have $\eta_{2,1}=D_5$  and 
		\[[D_5(2,1)]=e(\Pol^1(\C^2))e(\Pol^2(\C^2))(4u-6a_1),\]
 since we substitute   $\al_i\mapsto u/3-\al_i$ into $[Y_{2,1}]=6c_1$ (see \eqref{eq:root_subs}). By Kazarian's structure theorem, we obtain
\begin{equation}\label{eq:L2KD5}
	L_2(K_{D_5})=e(\Pol^2(\C^2))(4u-6a_1).
\end{equation}
We want to compare this with $\mathcal{L}_2(\tilde{K}_{D_5})$ using \eqref{eq:L-tildeL}. We can write $\tilde K_{D_5}=xQ_{3,1}+ytQ_{2,1}$, where the coefficient of $Q_4$ is zero by the vanishing of thin coefficients. Now by Corollary \ref{l-factorization}
\[ \mathcal L_2(tQ_{2,1})=-\frac u2 e(\Pol^2(\C^2)),\qquad  \mathcal L_2(Q_{3,1})=\mathcal L_2(Q_{2,1})\mathcal L_2(s_1)=e(\Pol^2(\C^2))\cdot (u-a_1),
\]
using that \[\mathcal L_2(s_1)=s_1(u/2-\al_1,u/2-\al_2)=u-a_1.\]
Therefore $\mathcal{L}_2(\tilde{K}_{D_5})=L_2(K_{D_5})$ is the equation
\[
e(\Pol^2(\C^2))(x(u-a_1)-yu/2)=e(\Pol^2(\C^2))(4u-6a_1)
\]
We obtain the equations $x=6$, $8=2x-y$, i.e.\ $\tilde{K}_{D_5}=6Q_{3,1}+4tQ_{2,1}$.
\end{example}	

\begin{example}  \label{ex211}
For $\la=(2,1,1)$, we have $\eta_{2,1,1}=X_{10}$ and $[Y_{2,1,1}]_{\GL_2}=12c_1$. Making the substitution $\al_i\mapsto u/4-\al_i$ in the Chern roots, we get \[[Y_{2,1,1}]_{\GL_2\times \GL_1}=12[(u/4-\al_1)+(u/4-\al_2)]=6u-12a_1.\]
Then by Proposition \ref{prop:binary}, we have
\[[\eta_{2,1,1}(2,1)]=e(\pol^1(\C^2))e(\pol^2(\C^2))e(\pol^3(\C^2))[Y_{2,1,1}]_{\GL_2\times \GL_1}\]
Using the Schur factorization Theorem \ref{thm:schur-factorization} we obtain the values of the stable Thom polynomial up to the thick partitions. From  Corollary \ref{l-factorization} we obtain the values of the Legendre Thom polynomial up to the thick partitions.  We omit the details of these purely algebraic computations, see Table \ref{tab:binary-crl-strata} for the result.
\end{example}

 Note that the examples after $E_6$ are beyond Mather's nice range \cite{Mather1971}.
\begin{remark} \label{rem:X9}
Kazarian has computed the entire Legendre Thom polynomial $K_{X_9}$, and it turns out that all coefficients of thick partitions are zero; i.e.\ the polynomial in Table \ref{tab:binary-crl-strata} is the full Legendre Thom polynomial of $X_9$, see \cite[\S 11]{MikoszPragaczWeber2011}. In fact, using different methods, Kazarian also computed the entire stable Thom polynomial $\Tp(X_9(-1))$ via methods of \cite{Kazarian} (private comm.). Here some of the actual values are larger than the ones predicted by the Thom series, see also Observation \ref{actual-expected}.
\end{remark}

\begin{remark}\label{rmk:indeterminacy}
To illustrate the indeterminacy of the stable and Legendre Thom polynomials, we enumerate the number of thick strict partitions (for $n=2$) in different degrees:
\[
6:Q_{3,2,1},\qquad 7:Q_{4,2,1}, \qquad 8:Q_{5,2,1}, Q_{4,3,1},\qquad 9: Q_{6,2,1}, Q_{5,3,1}, Q_{4,3,2}, 
\]
so together with $t$-powers, the number of undetermined coefficients of Legendre Thom polynomials in degrees $6,7,8,9,10$ is $1,2,4,7,12$. 

The number of thick partitions (for $n=2,p=1$), (and therefore the number of undetermined coefficients of the stable Thom polynomials) in degrees $6,7,8,9,10$ is $1,2,5,9,16$.
\end{remark}

This leaves a number of coefficients undetermined: we do not know the entire value of the stable Thom polynomial $\Tp(X_{10}(-1))$ or the Legendre Thom polynomial $K_{X_{10}}$.

\subsection{Ternary homogeneous singularities and beyond} For ternary degree $3$ polynomials we can use the classification of cubic curves and that their Thom polynomials were calculated in \cite[\S 8]{Komuves}, see also \cite[Appendix C]{LeePatelTseng2023}. Let $f$ be a $\GL(3)$-invariant subvariety of cubic curves. Then $f$ also corresponds to a function singularity $\eta_f$, and
\begin{equation}\label{eq:ternary}
	[\eta_f(3,1)]=[f]e(\pol^1(\C^3))e(\pol^2(\C^3)), 
\end{equation}
where $[f]$ is the $\GL(3)\times\GL(1)$-equivariant cohomology class of $f$. For $f=\pol^3(\C^3)$ we have $\eta_f(3,1)=\Sigma^{3,3}(3,1)=P_8$. Since $[f]=1$ in this case we have
\[ [\eta_f(3,1)]=e(\pol^1(\C^3))e(\pol^2(\C^3)). \]

Using the same method, we recover some Thom polynomials and obtain some new ones. We restrict our attention to the reduced cases (merely due to the size of the resulting Legendre Thom polynomials.)

\begin{table}[htbp] \centering \small \setlength{\tabcolsep}{3pt} \renewcommand{\arraystretch}{1.20} \begin{tabularx}{\textwidth}{ >{\centering\arraybackslash}m{1.45cm} >{\centering\arraybackslash}m{1.55cm} >{\centering\arraybackslash}m{2.35cm} >{\centering\arraybackslash}m{1.85cm} Y} \toprule 
		\textbf{Stratum} & \textbf{Curve} & \textbf{Ternary class} & \textbf{Arnold name} & \textbf{Legendre TP $\tilde{K}_\eta$ } \\
		\midrule 
		Smooth & \diagSmooth & $1$ & $P_8$ &
		\LTPcell{ Q_{3,2,1} } \\[2ex] 
		
		nodal & \diagNodal & $12c_1$ & $T_{3,3,4}$ & 
		\LTPcell{ 12Q_{4,2,1} +12tQ_{3,2,1} } \\[2ex] 
		
		$\nu$ & \diagNu & $24c_1^2$ & $Q_{10}$ & 
		\LTPcell{ 24Q_{5,2,1} +24Q_{4,3,1} +48tQ_{4,2,1} +24t^2Q_{3,2,1} } \\[2ex] 
		
		$\theta$ & \diagTheta & $18c_1^2+9c_2$ & $T_{3,4,4}$ & \LTPcell{ 18Q_{5,2,1} +27Q_{4,3,1} +42tQ_{4,2,1} +21t^2Q_{3,2,1} } \\[2ex] 
		
		$\Omega$ & \diagOmega & $36c_1^3+18c_1c_2$ & $S_{11}$ & \LTPcell{ 36Q_{6,2,1} +90Q_{5,3,1} +54Q_{4,3,2} +120tQ_{5,2,1} +138tQ_{4,3,1} \\{}+126t^2Q_{4,2,1} +42t^3Q_{3,2,1} } \\[2ex] 
		
		$A$ & \diagA & $12c_1^3+6c_1c_2+27c_3$ & $T_{4,4,4}$ & \LTPcell{ 12Q_{6,2,1} +30Q_{5,3,1} +45Q_{4,3,2} +40tQ_{5,2,1} +55tQ_{4,3,1} \\ {}+45t^2Q_{4,2,1} +15t^3Q_{3,2,1} } \\[2ex] 
		
		$\Zh$ & \diagZh & $12c_1^4+6c_1^2c_2+27c_1c_3$ & $U_{12}$ & \LTPcell{ 12Q_{7,2,1} +42Q_{6,3,1} +30Q_{5,4,1} +75Q_{5,3,2} +52tQ_{6,2,1} \\{}+125tQ_{5,3,1} +100tQ_{4,3,2} +85t^2Q_{5,2,1} +100t^2Q_{4,3,1} \\ {}+60t^3Q_{4,2,1} +15t^4Q_{3,2,1} +?\cdot Q_{4,3,2,1} } \\[2ex] 
		\bottomrule 
	\end{tabularx} 
	\caption{Ternary cubic strata, their $\GL_3$-equivariant classes, the corresponding Arnold types, and the associated Legendre Thom polynomials. \emph{All coefficients are determined except the coefficient of $Q_{4,3,2,1}$ in $U_{12}$}. The non-reduced examples (double-line, triple-line) can also be computed similarly, but more coefficients remain undetermined.
	} \label{tab:ternary-cubic-strata} 
\end{table}

\begin{theorem}
	Fix a ternary singularity $\eta_f$ corresponding to some ternary cubic stratum $f$. Then we have:
	\begin{itemize}
		\item The value of the Thom polynomial $[\eta_f(3,1)]$ is entirely determined by the ternary class $[f]_{\GL_3}$ (see Table \ref{tab:ternary-cubic-strata}) using \eqref{eq:root_subs} and \eqref{eq:ternary}.
	\item  The Thom polynomial $[\eta_f(3,1)]$ determines the value of the Legendre Thom polynomial $\tilde{K}_{\eta_f}$ up to the ideal of $Q$-polynomials indexed by strict partitions of length $\geq 4$. 
	\item The Thom polynomial $[\eta_f(3,1)]$ determines the value of the stable Thom polynomial $\Tp(\eta_f(-2))$ up to the ideal of Schur polynomials indexed by thick partitions (i.e.\ partitions containing $(4,4)$). 
		\end{itemize}
	For the values of the Legendre Thom polynomials	(in the reduced cases) see Table \ref{tab:ternary-cubic-strata}.
\end{theorem}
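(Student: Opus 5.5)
The plan mirrors the binary case, with $\GL_3$ in place of $\GL_2$ and $\Pol^3(\C^3)$ in place of $\Pol^k(\C^2)$.

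\textbf{First bullet.} Since $f\subset\Pol^3(\C^3)$ is $\GL_3$-invariant, it is contact invariant when embedded via $i:\Pol^3(\C^3)\to J^3(3,1)$. Indeed, on $i(f)$ the $2$-jet vanishes, so contact transformations only change terms of degree $>3$. As in Proposition \ref{prop:binary}, we have $i_!1=e(J^2(3,1))=e(\pol^1(\C^3))e(\pol^2(\C^3))$, which gives \eqref{eq:ternary}. The $\GL_3\times\GL_1$-equivariant class $[f]$ is obtained from the $\GL_3$-class of \cite{Komuves} by the substitution $\alpha_i\mapsto u/3-\alpha_i$ of \eqref{eq:root_subs}. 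This substitution is justified because $\GL_1$ acts on the target by scaling, which is equivalent on cubic forms to scaling the source by a cube root. Thus $[\eta_f(3,1)]$ is an explicit polynomial.

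\textbf{Third bullet.} This follows from Damon's Theorem \ref{damon} and the Schur factorization Theorem \ref{thm:schur-factorization} with $n=3$, $p=1$. The class $[\eta_f(3,1)]$ is divisible by $e(\pol^1(\C^3))=e(\Hom(A,B))=\rho^{3\to1}(s_{3})$. Part (ii) lets us write the quotient in the basis $s_{\alpha^T}(a)u^i$, which recovers all nice coefficients of $\Tp(\eta_f(-2))$. Part (iii) shows the thin coefficients vanish. Part (i) says the kernel is spanned by the thick $s_\lambda$, that is, those with $\lambda\supset(4,4)$. These coefficients are exactly the undetermined ones.

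\textbf{Second bullet.} By Kazarian's Theorem \ref{thm:kazarian}, $L_3(K_{\eta_f})=[f]\,e(\pol^2(\C^3))$. By \eqref{eq:L-tildeL} this equals $\mathcal L_3(\tilde K_{\eta_f})$. We then apply Corollary \ref{l-factorization} with $n=3$:
\begin{itemize}
\item $\mathcal L_3(Q_{\delta_3})=e(\pol^2(\C^3))$, so by the thin property only strict partitions containing $(3,2,1)$ occur;
\item a nice partition $\delta_3+\nu$ with $\ell(\nu)\le3$ contributes $e(\pol^2(\C^3))\,\mathcal L_3(s_\nu)\,(-u/2)^j$ for its $t^j$ multiple;
\item the kernel of $\mathcal L_3$ is spanned by the thick $Q_\lambda$, namely those containing $\delta_4$, equivalently those of length $\ge4$.
\end{itemize}
Hence determining $\tilde K_{\eta_f}$ modulo length $\ge4$ reduces to expanding $[f]$ in the basis $\{s_\nu(u/2-\alpha_1,\dots)\,u^j\}$. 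This basis is a genuine basis of $\Z[a_1,a_2,a_3,u/2]$ after the invertible change of variables used in Proposition \ref{prop:inQ}. Reading off the coefficients then gives the entries of Table \ref{tab:ternary-cubic-strata}.

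\textbf{Expected obstacle.} The main difficulty is bookkeeping rather than theory. One must correctly transport the $\GL_3$-classes of \cite{Komuves} to $\GL_3\times\GL_1$-classes, with the right normalisation $u/3$. One must also carry out the expansion in the twisted basis over $\Z[1/2]$ and check integrality. Finally, one must note where thick $Q$'s can appear: for degree reasons they first occur at $Q_{4,3,2,1}$ in degree $10$, and only $U_{12}$ reaches that degree. This explains the single undetermined coefficient in the table.
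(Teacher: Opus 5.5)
Your proposal is correct and follows the same route as the paper, which proves this exactly as in the binary examples. You use \eqref{eq:ternary} with the substitution \eqref{eq:root_subs} (normalised by $u/3$) to get $[\eta_f(3,1)]$, and then Theorem~\ref{thm:kazarian}, the Schur factorization Theorem~\ref{thm:schur-factorization} and the $Q$-factorization Corollary~\ref{l-factorization} with $n=3$ to determine $\Tp(\eta_f(-2))$ and $\tilde K_{\eta_f}$ modulo their thick parts. Your degree count explaining why only the coefficient of $Q_{4,3,2,1}$ for $U_{12}$ stays undetermined is also right, and no separate integrality check is needed, since the twisted Schur basis is already a $\Z$-basis of $\Z[a_1,a_2,a_3,u/2]$.
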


\begin{remark} The examples above are special cases of the following construction: If $Y\subset \pol^d(\C^n)$ is a $\GL(n)$-invariant subvariety, then $i(Y)\subset J^d(n,1)$ is contact invariant. An interesting class of such $Y$'s are the $\tau$-discriminants $D_\tau(d)$, where $\tau=(\eta_1,\dots,\eta_k)$ is a multisingularity of function singularities of $n-1$ variables. 
	
	If the corresponding Thom polynomials and residual polynomials are known then the equivariant cohomology class of
	\[D_\tau(d):=\{F\in \pol^d(\C^n): (F=0) \subset\P^{n-1}  \text{ has $\eta_i$ singularities}\}\]
	can be calculated, see upcoming work \cite{FeherMatszangoszupcoming}. 
	
	For the binary homogeneous case, for example $Y_{3,2,2,1}$ is the $\tau$-discriminant $D_\tau(8)$ for $\tau=A_2A_1^2$. 
	
	For the ternary case one can notice that all orbits in Table \ref{tab:ternary-cubic-strata} are $\tau$-discriminants, for example for $\theta$ we have $\tau=A_1^2$ and for $\Omega$  we have $\tau=A_3$. 
	
	For monosingularities we can even iterate this process: compute the class of an $\eta$-discriminant, deduce an $\eta'$ Thom polynomial, then compute a new $\eta'$-discriminant, \ldots. However, the codimension grows very quickly.
\end{remark}

\subsection{Multi-binary singularities}\label{sec:multibinary}
\newcommand{\II}{\mathcal{I}}
Certain function singularity types are designated not by a single given degree behaviour, but the interaction between several:
\begin{example}
	Recall that $f=f_1+f_2+\ldots$ is in the closure of $X_{10}$ iff $f_i=0$ for $i\leq 3$ and $f_4\in (2,1,1)$, i.e.\ there exists $l\in \Pol^1(\C^2)$ and $q\in \Pol^2(\C^2)$, such that $f_4=l^2q$. On the other hand, $f$ is in the closure of $X_{11}$ iff  $f_4=l^2q$ and $f_{5}=lh$ for the same $l$ and some $h\in \Pol^{4}(\C^2)$.
\end{example}

In this section we compute the Thom polynomial of $X_{11}$ and other, similarly defined singularities. We split the section in two parts: first we do the computation of $X_{11}$ in detail, and in the second part we state a general theorem and give examples.

\localheading{The Thom polynomial of $X_{11}$} 

To compute the $\GL_2\times\GL_1$-equivariant class of such ``multi-binary" singularities we can generalize the idea of projective Thom polynomials of \cite{FeherNemethiRimanyi}: 
Let $G$ be a linear algebraic group acting on the vector spaces $A,B$. Assume that $Y\subset V:=A\oplus B$ is a bihomogeneous subvariety (i.e.\ $\C^\times\times\C^\times$-invariant), which therefore has a bi-projectivization $\PP_{A,B}Y\subset \PP A\times \PP B$. 
\begin{proposition}
	Let $\Phi:\tilde{Y}\to \PP A\times \PP B$ be a $G$-equivariant resolution of $\PP_{A,B}Y$. Then
	\[
	[Y\subset A\oplus B]_G=\int_{\tilde{Y}}\Phi^*(e_G(Q_A)\cdot e_G(Q_B))
	\]
	where $Q_A$ and $Q_B$ are the tautological quotient bundles over $\PP A$ and $\PP B$.
\end{proposition}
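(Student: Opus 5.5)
The plan is to reduce the statement to the classical projective Thom polynomial formula of \cite{FeherNemethiRimanyi}, applied one factor at a time. That formula says that for a homogeneous $Y\subset V$ with resolution $\tilde Y\to\PP V$ of $\PP Y$, the class $[Y\subset V]_G$ is $\int_{\tilde Y}$ of the pullback of $e_G(Q_V)$. The underlying reason is that $[\{0\}\subset V]=e(V)$, and $e(V)$ restricted to $\PP V$ factors as $e(\mathcal O(-1))\,e(Q)$ via the Euler sequence $0\to\mathcal O(-1)\to V\to Q\to 0$.

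First, pass to the $(\C^\times)^2$-equivariant setting. Since $Y$ is bihomogeneous, it is the cone over $\PP_{A,B}Y$ with respect to the tautological bundle $\mathcal O_A(-1)\oplus\mathcal O_B(-1)$ over $\PP A\times\PP B$, up to the coordinate hyperplanes $A\times 0$ and $0\times B$. Concretely, consider the total space $E=\mathcal O_A(-1)\boxplus\mathcal O_B(-1)$ over $\PP A\times \PP B$ and the natural map $\pi:E\to A\oplus B$. Over $\tilde Y$ take the pullback bundle $\tilde E=\Phi^*E$. Then $\pi\circ\tilde\Phi:\tilde E\to A\oplus B$ is proper, its image is $\overline Y$, and it is birational onto $Y$. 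Here we assume $Y$ is the closure of its part with both components nonzero, so that generic fibers are points.

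Second, compute the pushforward. The proper map factors as the zero-section-style inclusion $\tilde E\hookrightarrow \tilde Y\times (A\oplus B)$, $(y,v)\mapsto (y,v)$, followed by the projection to $A\oplus B$. The inclusion is the zero locus of the section of $\Phi^*(A/\mathcal O_A(-1)\oplus B/\mathcal O_B(-1))=\Phi^*(Q_A\oplus Q_B)$ induced by $v$, and it is transversal. Hence the pushforward of $1$ is $\int_{\tilde Y}\Phi^*(e_G(Q_A)e_G(Q_B))$, which follows from the projection formula, since projection along the compact factor $\tilde Y$ is integration. Here $G$ acts on all of these bundles equivariantly. By birationality, $(\pi\tilde\Phi)_*1=[Y]$.

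The main obstacle is the properness and birationality bookkeeping near the axes $A\times 0$ and $0\times B$, and the question of whether $G$ contains the scalars. If $G$ does not contain $(\C^\times)^2$, I would first enlarge to $G\times(\C^\times)^2$ and then restrict, which is legitimate because the formula is natural under restriction. I would also point out that the paper's Appendix \ref{app:fibered} lemma on fibered resolutions is what makes $\Phi$ usable in practice. For the statement itself only an equivariant resolution, i.e.\ a degree-one proper map, is needed, and equivariant Poincaré duality then identifies pushforward with integration.
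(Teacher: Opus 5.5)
Your argument is correct and is essentially the paper's proof. The paper applies Proposition \ref{prop:fibered-reso} with $Z=\tilde Y$, $M=\PP A\times\PP B$, $E_M$ the trivial bundle with fiber $A\oplus B$ and $K=\Phi^*S_A\oplus\Phi^*S_B$; this is your identification of the class of the total space of $\Phi^*(\mathcal O_A(-1)\oplus\mathcal O_B(-1))$ in $\tilde Y\times(A\oplus B)$ with $e_G(\Phi^*(Q_A\oplus Q_B))$, followed by the proper, birational push-forward onto $Y$. Your opening reduction to the single-factor formula and the enlargement to $G\times(\C^\times)^2$ are not actually used, and the argument does not need them.
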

For the proof, see Proposition \ref{prop:biproj}.
 Based on this proposition, we will now compute the Thom polynomial of $[X_{11}(2,1)]$ in two steps:  first we compute the class of the \emph{shared-root locus} 
\[[Y_{11}\subset \Pol^4(\C^2)\oplus \Pol^5(\C^2)]_{\GL_2\times \GL_1},\] and then use the  analogue of the Proposition \ref{prop:binary} used earlier:
\begin{equation}\label{eq:X11}
	[X_{11}(2,1)]=e(J^{3}(2,1))[Y_{11}]_{\GL_2\times \GL_1}.
\end{equation}

Write $\PP_k:=\PP\Pol^k(\C^2)$ and let\footnote{In this setting, contrary to \eqref{eq:root_subs}, the $\GL_2$-equivariant class does not directly determine the $\GL_2\times \GL_1$-invariant one, so we work with $\GL_2\times\GL_1$.} $G=\GL_2\times \GL_1$ . Set $A=\Pol^4(\C^2)$, $B=\Pol^5(\C^2)$ and let
\[\PP_{A,B}Y_{11}=\{(p,r)\in\PP_4\times \PP_5:p=l^2q, r=lh \text{ for  some } l\in \PP_1,q\in \PP_2, h\in \PP_4\}\]
By its definition, this has a coincident-root-locus style $G$-equivariant resolution $\tilde{Y}_{11}=\PP_1\times \PP_2\times \PP_4$ given by $\Phi:\tilde{Y}_{11}\to \PP_4\times \PP_5$:
\[\Phi:(l,q,h)\mapsto (l^2q,lh).\]
Using this resolution, we can compute the shared-root locus $[Y_{11}]_{G}$ as follows.
\begin{lemma}\label{lemma:Y11}
\[
[Y_{11}]_{G}=\int_{\PP_1\times \PP_2\times \PP_4}\Phi^*(e_G(Q_{\PP_4})\cdot e_G(Q_{\PP_5}))=\int_{\PP_1}e_G\left( \frac{\Pol^4(\C^2)}{S_{\PP_1}^{\otimes2}\otimes \Pol^2(\C^2)}\right)e_G\left( \frac{\Pol^5(\C^2)}{S_{\PP_1}^{}\otimes \Pol^4(\C^2)}\right)
\]
\end{lemma}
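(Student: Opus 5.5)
The first equality is the bi-projective Proposition (Proposition \ref{prop:biproj}) applied to $\Phi:\tilde Y_{11}=\PP_1\times\PP_2\times\PP_4\to\PP_4\times\PP_5$. So I would first check that $\Phi$ is a $G$-equivariant resolution of $\PP_{A,B}Y_{11}$. It is equivariant because multiplication of polynomials is $\GL_2$-equivariant; the $\GL_1$-weights are handled by twisting the tautological bundles compatibly. Its image is exactly $\PP_{A,B}Y_{11}$ by definition. It is birational onto its image, because for generic $(l,q,h)$ the repeated linear factor $l$ of $l^2q$ is unique, which determines $l$ and hence $q$ and $h$. The domain is smooth and proper.

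For the second equality, I push forward along the projection $\pi:\PP_1\times\PP_2\times\PP_4\to\PP_1$ and integrate fiberwise over $\PP_2\times\PP_4$. Over $\PP_1$, $\Phi$ factors through the map of projective bundles
\[\PP(S_{\PP_1}^{\otimes 2}\otimes\Pol^2(\C^2))\times_{\PP_1}\PP(S_{\PP_1}\otimes\Pol^4(\C^2))\to \PP_1\times\PP_4\times\PP_5 ,\]
where $S_{\PP_1}$ is the tautological line. The maps $q\mapsto l^2q$ and $h\mapsto lh$ are injective linear maps of bundles $S^{\otimes2}\otimes\Pol^2\hookrightarrow\Pol^4$ and $S\otimes\Pol^4\hookrightarrow\Pol^5$. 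These identify the fiber over $l$ with a product of linear subspaces $\PP(W_1)\times\PP(W_2)\subset\PP_4\times\PP_5$.

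Next I apply the standard fact (as in \cite{FeherNemethiRimanyi}) that for a linear subbundle $W\subset V$ the integral of $e(Q_{\PP V})$ over $\PP(W)$ equals $e(V/W)$. This follows because restricted to $\PP W$ we have $Q_{\PP V}$ an extension of $V/W\otimes\mathcal O(1)$-type pieces by $Q_{\PP W}$. Then $e(Q_{\PP V})=e(Q_{\PP W})e(\ldots)$, and $\int_{\PP W}e(Q_{\PP W})=1$. Since both factors split, the fiber integral factors as a product. This gives the integrand
\[e_G\bigl(\Pol^4/(S^{\otimes2}\otimes\Pol^2)\bigr)\,e_G\bigl(\Pol^5/(S\otimes\Pol^4)\bigr)\]
on $\PP_1$.

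The main obstacle is the bookkeeping in this last step: making sure the equivariant quotient bundles are twisted correctly. Concretely, $Q_{\PP V}=V\otimes\mathcal O(1)/\mathcal O$ versus $V/S$ conventions, and whether the relevant Euler class is of $V/W$ or of $\Hom(S_W,V/W)$ restricted to the fiber. I would verify the convention on the single-factor case, recovering the known formula $[Y_\lambda]=\int e(\Pol^k/\ldots)$ of \cite{FeherNemethiRimanyi}, for instance $[Y_{2,1}]=6c_1$, before trusting the product version. Birationality, by contrast, is routine.
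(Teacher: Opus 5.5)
Your proposal is correct and follows essentially the paper's proof: the first equality is Proposition \ref{prop:biproj}, and the second comes from $\Phi^*S_{\PP_4}=S_{\PP_1}^{\otimes 2}\otimes S_{\PP_2}$ and $\Phi^*S_{\PP_5}=S_{\PP_1}\otimes S_{\PP_4}$, together with the identity $p_!\,e(V/S_{\PP W})=e(V/W)$ for $p:\PP W\to\PP_1$, applied separately over $\PP_2$ and $\PP_4$ (your check that $\Phi$ is birational is something the paper leaves implicit). Your convention worry resolves cleanly: with $Q_{\PP V}=V/S_{\PP V}$ as in the paper, the sequence $0\to Q_{\PP W}\to Q_{\PP V}|_{\PP W}\to p^*(V/W)\to 0$ carries no $\mathcal O(1)$ twist, so $e(Q_{\PP V}|_{\PP W})=e(Q_{\PP W})\,p^*e(V/W)$, and $p_!e(Q_{\PP W})=1$ then gives exactly the stated integrand.
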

\begin{proof}
The first equality is Proposition \ref{prop:biproj}, so we focus on the second equality. We first recall a simple identity: let $A\subset V$ be a sub-vector bundle over $X$, then for $p:\PP A\to X$ since $p_!e(Q_{\PP_A})=1$,
\begin{equation}\label{eq:integration}
p_!(e(V/S_{\PP A}))=e(V/A).
\end{equation}

Since $Q_{\PP_k}=\Pol^k(\C^2)/S_{\PP_k}$, and $\Phi^*S_{\PP_4}=S_{\PP_1}^{\otimes 2}\otimes S_{\PP_2}$
\[
\Phi^*Q_{\PP_4}=\Pol^4(\C^2)/S_{\PP_1}^{\otimes 2}\otimes S_{\PP_2}
\]	
which by \eqref{eq:integration} implies that
\[
\int_{\PP_2} e_G(\Pol^4(\C^2)/S_{\PP_1}^{\otimes 2}\otimes S_{\PP_2})=e_G\left(\frac{\Pol^4(\C^2)}{S_{\PP_1}^{\otimes 2}\otimes \Pol^2(\C^2)}\right)
\]
A similar argument applies to $\Phi^*S_{\PP_5}=S_{\PP_1}\otimes S_{\PP_4}$.
\end{proof}
The rest of the proof is a computation:
\begin{proposition}
	\[[Y_{11}]_{G}=60a_1^2-40a_2-47a_1u+11u^2\]
\end{proposition}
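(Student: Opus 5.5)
The plan is to evaluate the integral from Lemma \ref{lemma:Y11} by torus localization on $\PP_1=\PP\Pol^1(\C^2)$. This reduces $[Y_{11}]_G$ to a divided difference of an explicit cubic in the Chern roots.

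\emph{Weight conventions.} Restrict to the maximal torus of $G=\GL_2\times\GL_1$ with source Chern roots $\alpha_1,\alpha_2$ and target generator $u$, matching Corollary \ref{cor:1k}. The monomial $x^iy^j\in\Pol^k(\C^2)$ then has weight $u-i\alpha_1-j\alpha_2$.

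The tautological line $S_{\PP_1}$ is spanned by a linear form $l$. Inside $\Pol^4(\C^2)$ the subbundle $S_{\PP_1}^{\otimes 2}\otimes\Pol^2(\C^2)$ consists of the products $l^2q$. The factor $u$ must be counted only once in these products, so I will give $S_{\PP_1}$ the weight $-\alpha_1$ at $l=x$ and $-\alpha_2$ at $l=y$, with no $u$ attached. I will first check this convention on a known case: the analogous one-factor integral should reproduce $[Y_{2,1,1}]_{\GL_2\times\GL_1}=6u-12a_1$ from Example \ref{ex211}.

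\emph{Localization at the two fixed points.} The fixed points of $\PP_1$ are $l=x$ and $l=y$.

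At $l=x$, the subspace $x^2\Pol^2$ spans the monomials $x^iy^{4-i}$ with $i\ge 2$. The first quotient therefore has weights $u-4\alpha_2$ and $u-\alpha_1-3\alpha_2$. Similarly $x\Pol^4\subset\Pol^5$ misses only $y^5$, so the second quotient has the single weight $u-5\alpha_2$. The tangent weight there is $\Hom(S,Q)$, namely $(-\alpha_2)-(-\alpha_1)=\alpha_1-\alpha_2$.

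Setting
\[
F(\alpha_1,\alpha_2)=(u-4\alpha_2)(u-\alpha_1-3\alpha_2)(u-5\alpha_2),
\]
the Atiyah--Bott formula gives
\[
[Y_{11}]_G=\frac{F(\alpha_1,\alpha_2)-F(\alpha_2,\alpha_1)}{\alpha_1-\alpha_2}.
\]

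\emph{Expansion.} I will expand $F$ in powers of $u$, take the divided difference term by term, and rewrite the result in $a_1=\alpha_1+\alpha_2$ and $a_2=\alpha_1\alpha_2$. The $u^3$ terms cancel.

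As a sanity check, the $u^2$ coefficient is
\[
\frac{-(\alpha_1+12\alpha_2)+(\alpha_2+12\alpha_1)}{\alpha_1-\alpha_2}=11,
\]
which matches the claimed $11u^2$. The $u^1$ and $u^0$ coefficients are routine symmetric-function algebra and should yield $-47a_1$ and $60a_1^2-40a_2$.

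The only real obstacle is fixing the sign and weight conventions consistently: the $u$-shift and the sign of the weight of $S_{\PP_1}$. The comparison with Example \ref{ex211} settles these before the final expansion.
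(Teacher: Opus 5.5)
Your proposal is correct and is essentially the paper's proof: localize the integral of Lemma \ref{lemma:Y11} at the two fixed points of $\PP_1$. This yields the divided difference $\frac{F(\alpha_1,\alpha_2)-F(\alpha_2,\alpha_1)}{\alpha_1-\alpha_2}$ of the same cubic $F=(u-x-3q)(u-4q)(u-5q)$ that the paper uses. Your check of the weight conventions against $6u-12a_1$ from Example \ref{ex211} does work. The $u^1$ and $u^0$ coefficients you left as routine do come out to $-47a_1$ and $60a_1^2-40a_2$.
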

\begin{proof}
By the splitting principle,
	\[\frac{\Pol^4(\C^2)}{S_{\PP_1}^{\otimes2}\otimes \Pol^2(\C^2)}\iso S_1\otimes Q_1^{\otimes3}\oplus Q_1^{\otimes4},\qquad  \frac{\Pol^5(\C^2)}{S_{\PP_1}^{}\otimes \Pol^4(\C^2)}\iso Q_1^{\otimes5}\]

	so that if $H_G^*(\PP_1)=\Z[x,a_1,a_2,u]/(x^2-a_1x+a_2)$ with $x=c_1(S_1^\vee)$ and $q=c_1(Q_1^\vee)=a_1-x$, 
\[
\int_{\PP_1}(u-x-3q)(u-4q)(u-5q)
\]
By equivariant localization, we compute by setting $F(x,q)=(u-x-3q)(u-4q)(u-5q)$,
\[
\int_{\PP_1}F(x,q)=\frac{F(\al,\be)-F(\be,\al)}{\al-\be},
\]
where $\al,\be$ are Chern roots: $a_1=\al+\be$ and $a_2=\al\be$. Carrying out the computation we obtain the result; e.g.\ the coefficient of $u^2$ is $\frac{(12\be+\al)-(12\al+\be)}{\be-\al}=11$.
\end{proof}
Thus we obtain the Thom polynomial of the multi-binary singularity $[X_{11}(2,1)]$ by \eqref{eq:X11}, see Table \ref{tab:multi-binary-crl-strata} below.
\localheading{The general case} 
The numbers in $\Phi:\PP_1\times \PP_2\times \PP_4\to \PP_4\times \PP_5$ were essentially arbitrary; the example of $X_{11}$ generalizes as follows. Given a tuple of numbers $I=((a_i), (b_i): a_i\geq b_i\geq 1, i=1\stb r)$, with $r\geq 2$ consider resolutions of the form:
\[
\Phi_I:\PP_1\times \prod_{i=1}^r\PP_{a_i-b_i}\to \prod_{i=1}^r\PP_{a_i}
\]
\[
(l,q_1\stb q_r)\mapsto (l^{b_1}q_1\stb l^{b_r}q_r).
\]
The case of $Y_{11}$ is the tuple $((4,5), (2,1))$. To pass from the projective Thom polynomial to the affine version, we have the obvious analogue of Proposition \ref{prop:biproj}. We also have the analogue of Lemma \ref{lemma:Y11}:
\begin{lemma}\label{lemma:Y}
	\[
	[Y_I]_{G}=\int_{\PP_1}\prod_{i=1}^re_G\left( \frac{\Pol^{a_i}(\C^2)}{S_{\PP_1}^{\otimes b_i}\otimes \Pol^{a_i-b_i}(\C^2)}\right)=\int_{\PP_1}\prod_{i=1}^r\prod_{j=0}^{b_i-1}(u-jx-(a_i-j)q)
	\]
\end{lemma}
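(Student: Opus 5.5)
The lemma has two equalities, and I would prove them in order: first reduce the class to an integral over $\PP_1$, then evaluate the integrand with the splitting principle.

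\textbf{First equality.} I would follow the proof of Lemma \ref{lemma:Y11} factor by factor. The multi-projective analogue of Proposition \ref{prop:biproj} (with $r$ factors instead of two) gives
\[
[Y_I]_G=\int_{\PP_1\times\prod_i\PP_{a_i-b_i}}\Phi_I^*\prod_{i=1}^r e_G(Q_{\PP_{a_i}}).
\]
I would prove that analogue by the same argument as the two-factor case, or by induction on $r$.

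\begin{itemize}
\item The map $\Phi_I$ is generically injective onto its image. Indeed, for $b_i\ge1$, $r\ge2$ and generic $q_i$, the shared root $l$ is the unique common root of the $l^{b_i}q_i$, and it then determines the $q_i$. So $\Phi_I$ is a resolution of the image and no degree factor appears.
\item The tautological lines pull back as $\Phi_I^*S_{\PP_{a_i}}=S_{\PP_1}^{\otimes b_i}\otimes S_{\PP_{a_i-b_i}}$.
\item The $i$-th factor of the integrand depends only on $l$ and $q_i$. By Fubini, I would push forward along each $\PP_{a_i-b_i}$ separately, with $l$ fixed.
\item For each $i$, apply \eqref{eq:integration} to the subbundle $S_{\PP_1}^{\otimes b_i}\otimes\Pol^{a_i-b_i}(\C^2)\subset\Pol^{a_i}(\C^2)$ over $\PP_1$. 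It is a subbundle because multiplication by $l^{b_i}$ is injective. This gives the quotient Euler class $e_G\big(\Pol^{a_i}/(S_{\PP_1}^{\otimes b_i}\otimes\Pol^{a_i-b_i})\big)$.
\end{itemize}

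\textbf{Second equality.} Fix a point $l\in\PP_1$, so that $S_1=\langle l\rangle$, and choose a complementary line $m$ spanning $Q_1$. Then $\Pol^{a}(\C^2)$ has the filtration by powers of $l$ dividing the polynomial, with graded pieces $S_1^{\otimes j}\otimes Q_1^{\otimes(a-j)}$, $j=0,\dots,a$. The subbundle $S_1^{\otimes b}\otimes\Pol^{a-b}$ consists of the multiples of $l^{b}$, so it is exactly the part of the filtration with $j\ge b$. The quotient therefore has a filtration with graded pieces $S_1^{\otimes j}\otimes Q_1^{\otimes(a-j)}$ for $j=0,\dots,b-1$. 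This matches $S_1\otimes Q_1^{\otimes3}\oplus Q_1^{\otimes4}$ in the case $(a,b)=(4,2)$.

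Each polynomial coordinate is a linear functional on the jet space. So each line is twisted by the target $\GL_1$, which contributes the $u$. With the paper's conventions $x=c_1(S_1^\vee)$ and $q=c_1(Q_1^\vee)$, the Euler class of the piece with index $j$ is $u-jx-(a-j)q$. This is consistent with the factors $(u-x-3q)(u-4q)(u-5q)$ in the $X_{11}$ case. Since the Euler class is multiplicative on filtrations, taking the product over $j$ and then over $i$ gives the stated integrand.

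\textbf{Expected obstacle.} The main care goes into sign and dual conventions: whether each weight enters as $u-(\cdot)$ with duals on $S_1,Q_1$, and whether this agrees with the substitution \eqref{eq:root_subs}. I would fix these by checking the case $(a,b)=(4,2),(5,1)$ against Lemma \ref{lemma:Y11} and the explicit computation of $[Y_{11}]_G$ given there. The genericity and birationality check for $\Phi_I$ is straightforward by comparison.
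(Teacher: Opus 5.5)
Your proposal is correct and follows essentially the paper's argument: the paper treats this lemma as the direct analogue of Lemma \ref{lemma:Y11}, using the multi-factor version of Proposition \ref{prop:biproj}, the pullback $\Phi_I^*S_{\PP_{a_i}}=S_{\PP_1}^{\otimes b_i}\otimes S_{\PP_{a_i-b_i}}$, and \eqref{eq:integration} along each $\PP_{a_i-b_i}$, and then evaluates the integrand with the same splitting-principle step it uses for $[Y_{11}]_G$. Your filtration of $\Pol^{a}(\C^2)$ by multiples of $l^j$ makes that splitting explicit for general $(a_i,b_i)$, and your generic-injectivity check, which uses $r\ge 2$, supplies the birationality of $\Phi_I$ that the paper leaves implicit.
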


To compute Thom polynomials of contact singularities, we are interested in shared-root loci indexed by tuples of the form 
\begin{equation}\label{eq:tuple_index}
	\II_k(b_1>\ldots>b_r):=((k+i-1), (b_i):k+i-1\geq b_i, i=1\stb r).\	
\end{equation}
For such $\II_k(b_1>\ldots>b_r)$, we have the analogue of Proposition \ref{prop:binary}. The key reason is that the preimage of such shared-root strata in  the jet-space is $\mathcal{K}$-invariant. (This was immediate in the case of binary singularities, since contact equivalence only adds higher degree contributions.)
\begin{lemma}
Let $I:=\mathcal{I}_k(b_1>\ldots>b_r)$ be the index of a shared-root locus $Y_I\subset \bigoplus_{i=1}^r \Pol^{k+i-1}(\C^2)$ and let $i:\bigoplus_{i=1}^r \Pol^{k+i-1}(\C^2)\to J^{k+r-1}(\C^2)$ denote the inclusion. Then $\eta_I:=i(\clos{Y_I})$ is $\mathcal{K}$-invariant and
\begin{equation}\label{eq:multibinary_to_singularity}
	[\eta_I]_{\mathcal{K}}= e(J^{k-1}(\C^2))[Y_I]_{\GL_2\times\GL_1}
\end{equation}
\end{lemma}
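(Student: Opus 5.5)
We have to prove two things: that $\eta_I=i(\clos{Y_I})$ is $\mathcal{K}$-invariant, and that its class is $e(J^{k-1}(\C^2))[Y_I]_{\GL_2\times\GL_1}$.

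\textbf{Step 1: identify $\eta_I$ inside the jet space.} Write $N=k+r-1$. First regard $\eta_I$ as the subset
\[
\{f\in J^N(2,1): j^{k-1}f=0,\ (f_k,\dots,f_{N})\in \clos{Y_I}\}.
\]
Here $f_d$ is the degree $d$ homogeneous part. This is the product of $\clos{Y_I}$ with the zero locus of the first $k-1$ graded pieces. It is exactly the image of the linear embedding $i$.

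\textbf{Step 2: $\mathcal{K}$-invariance.} The contact group is generated by three kinds of elements: linear source changes in $\GL_2$, target scalings in $\GL_1$, and unipotent elements. The unipotent elements are nonlinear diffeomorphisms $\phi$ tangent to the identity, together with multiplication by units $1+m$, $m\in\mathfrak m$. The linear part $\GL_2\times\GL_1$ preserves $\clos{Y_I}$, since the shared-root condition is equivariant.

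For the unipotent part, let $f$ have $j^{k-1}f=0$, and fix $d$ with $k\le d\le N$. Then the degree $d$ part of $(1+m)\,f\circ\phi$ is
\[
f_d+\sum_{j<d} L_{d,j}(f_j),
\]
where each $L_{d,j}$ is obtained by multiplying $f_j$ by polynomials in $\mathfrak m$, or by applying differential operators $\sum \psi_\ell\,\partial_\ell$ (iterated) with $\psi$ of order $\ge 2$. So each correction term lies in the ideal generated by $f_j$ and its partial derivatives $\partial^\beta f_j$, for $j<d$.

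Now use the shared-root condition. If $l^{b_j}$ divides $f_j$, then $l^{b_j-|\beta|}$ divides $\partial^\beta f_j$. The indices with $j<d$ satisfy $b_j>b_d$, and $|\beta|$ is bounded by the order gap $d-j$. We need the divisibility exponent to stay at least $b_d$, i.e.
\[
b_j-|\beta|\ge b_d .
\]
This is the step I expect to be the main obstacle, since it requires an exact degree count. Each derivative applied to $f_j$ comes with a factor $\psi$ of order $\ge 2$, hence raises the degree by at least one. So reaching degree $d$ from $f_j$ allows at most $d-j$ derivatives. The strict decrease $b_1>b_2>\dots$ gives $b_j\ge b_d+(d-j)$, which is exactly the bound needed.

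I would first verify this on the $X_{11}$ case $((4,5),(2,1))$. There the only relevant term is $\partial f_4$, which is divisible by $l$. I would then argue in general by induction on the degree. Closure in $\clos{Y_I}$ then follows by continuity.

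\textbf{Step 3: the class.} The orbit map reduces to $\GL_2\times\GL_1$, because the unipotent radical is contractible and $H^*_{\mathcal K}\iso H^*_{\GL_2\times\GL_1}$. The space $J^N(2,1)$ splits equivariantly as
\[
J^N(2,1)=J^{k-1}(2,1)\oplus \bigoplus_{d=k}^{N}\Pol^d(\C^2),
\]
and $\eta_I=\{0\}\times\clos{Y_I}$ in this splitting. The equivariant class of a product is the product of the classes. The class of the origin in $J^{k-1}(2,1)$ is its Euler class $e(J^{k-1}(2,1))$. This gives \eqref{eq:multibinary_to_singularity}, exactly as in Proposition \ref{prop:binary}.
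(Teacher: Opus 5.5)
Your proposal is correct and follows the paper's proof: the paper also Taylor-expands a contact transformation, notes that each of the $t$ derivatives of $f_{k+i-1}$ is paired with a $\varphi_{k_s}$ of degree $\ge 2$ so that $t\le j-i$, and concludes $b_i-t\ge b_i-(j-i)\ge b_j$ for every contribution. The only differences are cosmetic: you split off the reductive part $\GL_2\times\GL_1$ first, so $l$ stays fixed instead of tracking $\varphi_1^*l$, and you write out the product/Euler class computation that the paper leaves implicit by analogy with Proposition \ref{prop:binary}; your planned induction on degree is unnecessary, since the direct degree count already gives the bound.
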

\begin{proof}
	The nontrivial part of the statement is that $i(\clos{Y_I})$ is $\mathcal{K}$-invariant. The locus $\clos{Y_I}$ for $I=\II_k(b)$ is by definition the locus, where there exists a line $l$, such that the multiplicity of $l$ in the degree $k+i-1$ part is at least $b_i$, for $i=1\stb r$. We want to show that this property is invariant under the contact group. Write a generic contact transformation as
	\[
	f\to
	K.f=\lambda(f\circ\varphi),
	\qquad K=(\varphi,\lambda),\qquad 
	\varphi=\varphi_1+\varphi_2+\varphi_3+\cdots,
	\qquad
	\lambda=\la_0+\lambda_1+\lambda_2+\cdots.
	\]
	with $\la_0\in \C^\times, \varphi_1\in \GL_2$. 
	The contributions of $f_{k+i-1}$ to $(K.f)_{k+j-1}$ are sums over terms of the Taylor expansion of the form
	\[
	\la_b\cdot (D^tf_{k+i-1}\circ \varphi_1)[\varphi_{k_1}\stb \varphi_{k_t}]
	\]
	with $k_i\geq 2$ and $b+i+\sum (k_i-1)=j$. Since $k_i\geq  2$, this implies $t\leq j-i$.  
	
	Write $f_{k+i-1}=l^{b_i}q_i$. Then $\varphi_1^*l$ has some nontrivial multiplicity in $(K.f)_{k+j-1}$. For each such term $D^t$ lowers the multiplicity of $\varphi_1^{*}l$ in $(K.f)_{k+j-1}$ by at most $t$. Therefore the multiplicity of $\varphi_1^{*}l$ in $(K.f)_{k+j-1}$ is at least
\[ b_i-t\geq b_i-(j-i)\geq b_j\]
for each contribution as required.
\end{proof}
Applying Lemma \ref{lemma:Y} and \eqref{eq:multibinary_to_singularity} we can compute the Thom polynomials $[\eta_I(2,1)]$ of multi-binary loci  for different values of $I$. We state this in the following theorem.

\begin{table}[htbp]
	\centering
	\small
	\setlength{\tabcolsep}{3pt}
	\renewcommand{\arraystretch}{1.18}
	\begin{tabularx}{\textwidth}{
			>{\centering\arraybackslash}m{2.2cm}
			>{\centering\arraybackslash}m{4.2cm}
			>{\centering\arraybackslash}m{1.6cm}
			Y}
		\toprule
		\textbf{Index $\II_k(b_i)$} &
		\textbf{Shared-root class $[Y_I]_{\GL_2\times \GL_1}$} &
		\textbf{Arnold name} &
		\textbf{Legendre TP $\tilde{K}_\eta$ (for $Q_\la$ with $\ell(\la)\leq 2$)} \\
		\midrule
		
$\II_2(2,1)$
& \LTPcell{6a_1^2 - 11a_1u + 5u^2}
& $A_3$
& \LTPcell{3Q_{2} + tQ_{1}} \\\addlinespace[5pt]

		$\II_3(2,1)$
		& \LTPcell{24a_1^2 - 26a_1u - 12a_2 + 8u^2}
		& $D_6$
		& \LTPcell{24Q_{4,1} + 12Q_{3,2} + 32tQ_{3,1} + 12t^2Q_{2,1}} \\\addlinespace[5pt]
		
		$\II_3(3,1)$
		&\LTPcell{-24 a_{1}^{3} + 50 a_{1}^{2} u - 12 a_{1} a_{2} \\
			- 33 a_{1} u^{2} + 7 a_{2} u + 7 u^{3}}
		& $E_7$
		& \LTPcell{24Q_{5,1} + 60Q_{4,2} + 56tQ_{4,1} + 66tQ_{3,2} + 42t^2Q_{3,1} + 10t^3Q_{2,1}}\\\addlinespace[5pt]
		
		$\II_3(3,2)$
		&\LTPcell{72 a_{1}^{4} - 174 a_{1}^{3} u - 12 a_{1}^{2} a_{2} \\
			+ 153 a_{1}^{2} u^{2} + 39 a_{1} a_{2} u - 60 a_{1} u^{3} \\
			- 24 a_{2}^{2} - 15 a_{2} u^{2} + 9 u^{4}}
		& $E_8$
		& \LTPcell{72Q_{6,1} + 204Q_{5,2} + 108Q_{4,3} + 216tQ_{5,1} + 414tQ_{4,2} \\
			+ 246t^2Q_{4,1} + 246t^2Q_{3,2} + 126t^3Q_{3,1} + 24t^4Q_{2,1}}\\\addlinespace[5pt]
		
		$\II_3(3,2,1)$
		&\LTPcell{
			-360 a_{1}^{5}
			+ 942 a_{1}^{4} u 
			+ 420 a_{1}^{3} a_{2}\\
			{}- 949 a_{1}^{3} u^{2} - 1007 a_{1}^{2} a_{2} u\\
			{}+ 478 a_{1}^{2} u^{3} + 300 a_{1} a_{2}^{2}\\
			{}+ 679 a_{1} a_{2} u^{2} - 125 a_{1} u^{4}\\
			{}- 154 a_{2}^{2} u - 140 a_{2} u^{3}
			+ 14 u^{5}
		}
		& $J_{10}$
		& \LTPcell{360Q_{7,1} + 1020Q_{6,2} + 660Q_{5,3} \\
			+ 1296tQ_{6,1} + 2782tQ_{5,2} + 1194tQ_{4,3} \\
			+ 1898t^2Q_{5,1} + 2912t^2Q_{4,2} + 1438t^3Q_{4,1} \\
			+ 1238t^3Q_{3,2} + 568t^4Q_{3,1} + 92t^5Q_{2,1}}\\\addlinespace[3pt]
		
		$\II_4(2,1)$
		& \LTPcell{60a_1^2-40a_2-47a_1u+11u^2}
		& $X_{11}$
		& \LTPcell{1080Q_{7,2} + 3060Q_{6,3} + 1620Q_{5,4} + 360tQ_{7,1} + 4908tQ_{6,2} \\
			+ 8490tQ_{5,3} + 1296t^2Q_{6,1} + 8236t^2Q_{5,2} + 7044t^2Q_{4,3} \\
			+ 1862t^3Q_{5,1} + 6224t^3Q_{4,2} + 1312t^4Q_{4,1} + 1832t^4Q_{3,2} \\
			+ 442t^5Q_{3,1} + 56t^6Q_{2,1}} \\\addlinespace[3pt]
		
		$\II_4(3,1)$
		& \LTPcell{-120 a_{1}^{3} + 154 a_{1}^{2} u + 80 a_{1} a_{2} \\- 69 a_{1} u^{2} - 40 a_{2} u + 11 u^{3}}
		& $Z_{12}$
		& \LTPcell{2160Q_{8,2} + 8280Q_{7,3} + 9360Q_{6,4} + 720tQ_{8,1} + 12696tQ_{7,2} \\
			+ 32916tQ_{6,3} + 20220tQ_{5,4} + 3312t^2Q_{7,1} + 28880t^2Q_{6,2} \\
			+ 47540t^2Q_{5,3} + 6316t^3Q_{6,1} + 32644t^3Q_{5,2} + 26536t^3Q_{4,3} \\
			+ 6348t^4Q_{5,1} + 18736t^4Q_{4,2} + 3508t^5Q_{4,1} + 4548t^5Q_{3,2} \\
			+ 996t^6Q_{3,1} + 112t^7Q_{2,1}} \\\addlinespace[3pt]
		
		$\II_4(4,1)$
		&\LTPcell{120 a_{1}^{4} - 274 a_{1}^{3} u + 160 a_{1}^{2} a_{2} \\+ 219 a_{1}^{2} u^{2} - 152 a_{1} a_{2} u \\- 74 a_{1} u^{3} + 36 a_{2} u^{2} + 9 u^{4}}
		& $W_{13}$
		& \LTPcell{2160Q_{9,2} + 14760Q_{8,3} + 32760Q_{7,4} + 20160Q_{6,5} + 720tQ_{9,1} \\
			+ 19176tQ_{8,2} + 85500tQ_{7,3} + 119376tQ_{6,4} + 4752t^2Q_{8,1} \\
			+ 63248t^2Q_{7,2} + 184756t^2Q_{6,3} + 127844t^2Q_{5,4} + 12796t^3Q_{7,1} \\
			+ 103920t^3Q_{6,2} + 184284t^3Q_{5,3} + 18284t^4Q_{6,1} + 92412t^4Q_{5,2} \\
			+ 77472t^4Q_{4,3} + 14980t^5Q_{5,1} + 43512t^5Q_{4,2} + 7028t^6Q_{4,1} \\
			+ 8964t^6Q_{3,2} + 1744t^7Q_{3,1} + 176t^8Q_{2,1}}\\\addlinespace[3pt]
		
		\bottomrule
	\end{tabularx}
	\caption{Shared-root loci indexed by the tuples defined in \eqref{eq:tuple_index},
		their $\GL_2\times\GL_1$-equivariant classes, the corresponding
		generic Arnold contact singularities, and their Legendre TP's. \emph{The values of the TP's are valid up to the ideal  $(Q_\la:\ell(\la)\geq 3)$}; undetermined coefficients first appear for $E_7$, cf.\ Remark \ref{rmk:indeterminacy}.}
	\label{tab:multi-binary-crl-strata}
\end{table}

\begin{theorem}
			Fix a multi-binary singularity $\eta_I$ corresponding to some shared-root stratum $Y_I$, $I=\II_k(b_i)$. Then 
		\begin{itemize}
			\item The value of the Thom polynomial $[\eta_I(2,1)\subset J^{k+r-1}]$ is entirely determined by the class of the shared-root-locus $[Y_I]_{\GL_2\times \GL_1}$ (see Table \ref{tab:multi-binary-crl-strata}) using \eqref{eq:multibinary_to_singularity}.
			\item  The Thom polynomial $[\eta_I(2,1)]$ determines the value of the Legendre Thom polynomial $\tilde{K}_{\eta_I}$ up to the ideal of $Q$-polynomials indexed by strict partitions of length $\geq 3$. 
			\item The Thom polynomial $[\eta_I(2,1)]$ determines the value of the stable Thom polynomial $\Tp(\eta_I(-1))$ up to the ideal of Schur polynomials indexed by thick partitions (i.e.\ partitions containing $(3,3)$). 
		\end{itemize}
		See Table \ref{tab:multi-binary-crl-strata} for the values of the Legendre Thom polynomials for small values of $I=\II_k(b_i)$ (up to the ideal described above). 
\end{theorem}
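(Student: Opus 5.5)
The three bullet points will be proved in order, following the pattern already used for binary singularities (Examples \ref{ex111}, \ref{ex21}, \ref{ex211}). The only new geometric input is the identity \eqref{eq:multibinary_to_singularity}, which the preceding lemma already provides.

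For the first bullet, I take the shared-root class $[Y_I]_{\GL_2\times\GL_1}$ from Lemma \ref{lemma:Y}. The integral over $\PP_1$ is computed by the localization formula
\[
\int_{\PP_1}F(x,q)=\frac{F(\alpha,\beta)-F(\beta,\alpha)}{\alpha-\beta},
\]
exactly as in the computation of $[Y_{11}]_G$. This yields an explicit polynomial in $a_1,a_2,u$. By \eqref{eq:multibinary_to_singularity} we then have $[\eta_I(2,1)]=e(J^{k-1}(2,1))[Y_I]_{\GL_2\times\GL_1}$. Since $e(J^{k-1}(2,1))=\prod_{i=1}^{k-1}e(\Pol^i(\C^2))$ is explicit, the Thom polynomial is entirely determined by $[Y_I]$.

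For the second bullet, note that $e(J^{k-1}(2,1))$ contains the factor $e(J^1(2))=e(\Pol^1(\C^2))$. Kazarian's Theorem \ref{thm:kazarian} then gives $L_2(K_{\eta_I})=\prod_{i=2}^{k-1}e(\Pol^i(\C^2))\,[Y_I]$. For $k\geq3$ this is divisible by $e(\Pol^2(\C^2))=\mathcal L_2(Q_{\delta_2})$ (Corollary \ref{l-factorization}(ii)). Rewriting via \eqref{eq:L-tildeL} as $\mathcal L_2(\tilde K_{\eta_I})$, Corollary \ref{l-factorization}(iii) shows that the thin $Q$-coefficients vanish. Part (ii) of that corollary writes $\mathcal L_2$ of the nice part as $e(\Pol^2(\C^2))$ times $\mathcal L_2(s_\nu)$. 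Part (i) shows that the kernel of $\mathcal L_2$ is spanned over $\Z[t]$ by $Q_\lambda$ with $\delta_3\subset\lambda$, i.e.\ $\ell(\lambda)\geq3$. The nice coefficients are therefore uniquely determined, because $\mathcal L_2$ is injective on the $\Z[t]$-span of nice $Q_\lambda$; this injectivity follows from Theorem \ref{thm:q-factorization}. The case $k=2$ (the index of $A_3$) has no $e(\Pol^2)$ factor. There all strict partitions have length $\leq 2$ in the relevant degree, so $\mathcal L_2$ is injective there after the thin/nice analysis with $\delta_1$; it can also be checked directly.

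For the third bullet, I argue exactly as in Observation \ref{obs:tptostable}. By Damon's Theorem \ref{damon}, $\rho^{2\to1}\Tp(\eta_I(-1))=[\eta_I(2,1)]$, and this is divisible by $e(\Hom(\C^2,\C))=\rho^{2\to1}(s_{2})$. Theorem \ref{thm:schur-factorization}(iii) kills the thin coefficients, and (ii) determines the nice ones uniquely. By (i), the ambiguity is exactly the span of thick $s_\lambda$, i.e.\ those with $(3,3)\subset\lambda$. The main obstacle is the identification of $\eta_I$ with the named Arnold singularity and the $\mathcal K$-invariance; the latter is settled by the preceding lemma. The remaining content, the entries of Table \ref{tab:multi-binary-crl-strata}, is a routine but lengthy linear-algebra expansion in the nice $Q$-basis, which I would carry out by computer.
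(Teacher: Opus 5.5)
Your proposal is correct and follows essentially the paper's route: \eqref{eq:multibinary_to_singularity} (with $[Y_I]$ computed from Lemma~\ref{lemma:Y}) gives the first bullet, Kazarian's theorem together with the kernel and factorization properties in Corollary~\ref{l-factorization} gives the second, and Damon's theorem together with Theorem~\ref{thm:schur-factorization} gives the third, exactly as the paper does for the binary case in Examples~\ref{ex21} and~\ref{ex211}. The only blemish is your muddled phrase about a ``thin/nice analysis with $\delta_1$'' in the $k=2$ case. There $L_2(K_{\eta_I})$ is not divisible by $\mathcal L_2(Q_{\delta_2})$; indeed $\tilde K_{A_3}=3Q_2+tQ_1$ has thin terms. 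Injectivity holds simply because no strict partition of size $\leq 2$ contains $\delta_3$.
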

The example of $[J_{10}(2,1)]$ is noteworthy from the point of view of Arnold's mathematical trinities \cite[p.\ 10]{ArnoldTrinities}, \cite{Arnold1976}:
the underlying singularity $J_{10}$ is $\tilde{E}_8$ from the trinity of parabolic unimodal \cite{Arnold1976} (simple-elliptic normal surface \cite{Saito1974}) singularities
\[
P_8=\tilde{E}_6,\qquad X_9=\tilde{E}_7,\qquad J_{10}=\tilde{E}_8.
\]
Thus we also obtain the Legendre Thom polynomial of $J_{10}$ up to the missing coefficients $Q_{5,2,1}$, $Q_{4,3,1}$, $tQ_{4,2,1}$, $t^2Q_{3,2,1}$. The knowledge of the entire $K_{J_{10}}$ therefore amounts determining these missing coefficients, which would be entirely determined by the Thom polynomial $[J_{10}(3,1)]$.

Deopurkar’s computation of equivariant classes of $\GL(2)$ orbit closures \cite{Deo26} suggests that the methods of this section can be extended to contact invariant loci arising from orbit closures in $\bigoplus_{i=j}^d \Pol^i(\C^2)$. The technique for multi-binary singularities described in this section can also be refined to compute $X_{k}$, $k>11$, but since those singularities are not strictly speaking ``multi-binary", we will study their computation elsewhere.

\section{Thom polynomials of Thom-Boardman classes of order two:  \texorpdfstring{$\check{\Sigma}^{1,j}$}{function singularity}}	\label{sec:TB}
Thom polynomials of Thom-Boardman classes of order 2 were calculated in \cite{FeherKomuves}. These cases provide an infinite family of Thom series of function singularities by Observation \ref{obs:TBfunction}. In this section we give some Legendre Thom polynomials as well as negative Thom series for Thom-Boardman singularities (cf.\ Section \ref{sec:TB_discussion}). Notice that the examples in this section give Thom series in negative codimension for infinitely many singularity types.

\begin{theorem}
	The Legendre Thom polynomial of $\check{\Sigma}^{1,j}$ is 
	\[\tilde K_{\check{\Sigma}^{1,j}}=Q_{\delta_j}.\]
\end{theorem}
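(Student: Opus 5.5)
Since $\tilde K_\eta$ is determined by $[\eta(n,1)]$ once $n$ is large enough (Corollary \ref{l-factorization}, or item (2) of Section \ref{sec:thompolynomialsconnection}), the plan is to compute $[\check\Sigma^{1,j}(n,1)]$ directly for one suitable $n$ and compare with $\mathcal{L}_n(Q_{\delta_j})$. We choose $n=j$. Then $\check\Sigma^{1,j}(j,1)=\Sigma^{j,j}(j,1)$ is the locus of germs $f\in J(j,1)$ with $j^1f=0$ and Hessian of corank $j$, that is, with $j^2f=0$. By Proposition \ref{prop:sigma-n-ad-k} (with $k=2$, $p=1$),
\[
[\check\Sigma^{1,j}(j,1)]=e(\Hom(\C^j,\C))\,e(\Hom(\Sym^2\C^j,\C))=e(J^1(j))\,e(\pol^2(\C^j)).
\]
We then compare this with Kazarian's Theorem \ref{thm:kazarian}, $[\eta(j,1)]=e(J^1(j))L_j(K_\eta)$. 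Since $\Z[a,u]$ is a domain, this gives
\[
\mathcal{L}_j(\tilde K_{\check\Sigma^{1,j}})=e(\pol^2(\C^j))=\prod_{1\le i\le l\le j}(u-\alpha_i-\alpha_l)=\mathcal{L}_j(Q_{\delta_j}),
\]
where the last equality is Corollary \ref{l-factorization}(ii).

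It remains to remove the ambiguity coming from $\ker\mathcal{L}_j$. By Corollary \ref{l-factorization}(i), $\tilde K-Q_{\delta_j}$ lies in the $\Z[t]$-span of the $Q_\lambda$ with $\lambda\supset\delta_{j+1}$. Such $Q_\lambda$ have degree at least $\binom{j+2}{2}$. The class $\tilde K$ has degree $\gamma-1$, where $\gamma=\codim\check\Sigma^{1,j}(-r)-r=\binom{j+1}{2}+1$ by Corollary \ref{cor:ThomBoardmankJ}. So $\deg\tilde K=\binom{j+1}{2}<\binom{j+2}{2}$, the kernel is zero in this degree, and $\tilde K_{\check\Sigma^{1,j}}=Q_{\delta_j}$. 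This matches the injectivity criterion $\gamma-1<\binom{n+2}{2}$ of Figure \ref{fig:tprelations}. The degrees are also consistent, since $|\delta_j|=\binom{j+1}{2}$.

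The main point to check is that $\check\Sigma^{1,j}$ actually appears in $J(j,1)$, so that its Legendre Thom polynomial is the one for this Thom–Boardman class and Kazarian's theorem applies at $n=j$. Via Observation \ref{obs:TBfunction}, $\check\Sigma^{1,j}(-r)$ requires $j\le r+1$, and $n=j$ gives $r=j-1$, so this is exactly the first appearance. One must also confirm that Kazarian's theorem applies to this Thom–Boardman class, which is a $\mathcal{K}$-invariant closed subvariety but not an orbit closure. I would argue this as in the binary cases of Section \ref{sec:binary}, where Kazarian's theorem is used for any contact-invariant locus whose classes stabilize under quadratic unfolding, which holds here by Observation \ref{obs:TBfunction}.
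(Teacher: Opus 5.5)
Your proposal is correct and follows essentially the same route as the paper: compute $[\Sigma^{j,j}(j,1)]=e(\pol^1(\C^j))\,e(\pol^2(\C^j))$ via Proposition~\ref{prop:sigma-n-ad-k}, divide by $e(J^1(j))$ using Kazarian's Theorem~\ref{thm:kazarian}, and identify the quotient with $\mathcal{L}_j(Q_{\delta_j})$ by Corollary~\ref{l-factorization}. The paper leaves implicit the step you spell out, namely that $\ker\mathcal{L}_j$ vanishes in degree $\binom{j+1}{2}<\binom{j+2}{2}$, so $\tilde K$ is uniquely determined.
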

\begin{proof}
Indeed by Proposition  \ref{prop:sigma-n-ad-k}
\[[\Sigma^{j,j}(j,1)]=[\check{\Sigma}^{1,j}(j,1) ]=e(\pol^1(\C^j))e(\pol^2(\C^j)).\]

Using the Kazarian structure theorem and Corollary \ref{l-factorization} we obtain the result.
\end{proof}

This was noticed in \cite{MikoszPragaczWeber2009} without identifying the singularity class with  $\check{\Sigma}^{1,j}(j,1)$.
\bigskip

Giving a closed formula for the Thom series of  $\check{\Sigma}^{1,j}$ is a challenge, but it has been calculated in \cite{FeherKomuves}:  Set
\[E_{\lambda/\mu}(n) := C_{\lambda/\mu}\!\cdot\!\prod_{(i,j)\in\lambda/\mu}(n-i+j),\]
where $C_{\lambda/\mu}$ is the determinant
\[ C_{\lambda/\mu}=\det\left[ \frac{1}{\,(\lambda_i-\mu_j-i+j)!\,} \right]_{i,j\in l(\lambda)\times l(\lambda)}=
s_{\lambda/\mu}\left(1,\frac{1}{2},\frac{1}{3!},\frac{1}{4!},\dots\right) \]
(recall that $s_{\lambda/\mu}(c)=\det[c_{\lambda_i-\mu_j-i+j}]$ are the \emph{skew Schur polynomials}).

\begin{theorem} \label{sigmaij}
	The stable Thom polynomial of the second order Thom-Boardman singularity $\check\Sigma^{1,j}(-r)=\Sigma^{r+1,j}(-r)$ is (for $r\geq j-1$)
	\[\Tp(\check\Sigma^{1,j}(-r)) =   \Tp(\Sigma^{r+1,j}(-r)) =  \sum_{\mu\subset\delta} 2^{|\mu|-j(j-1)/2} \cdot E_{\delta/\mu}(r+1) \cdot s_{d-|\mu|,\tilde\mu}, \]
	where $\delta$ is the `staircase' partition $\delta=(j,j-1,\dots,2,1)$, $\widetilde\mu$ denotes the conjugate partition and
	\[ d=\codim\,\Sigma^{r+1,j}(-r)=r+1+|\delta|=r+1+{\textstyle \binom{j+1}{2}}.\]
\end{theorem}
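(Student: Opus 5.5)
Since $\tilde K_{\check\Sigma^{1,j}}=Q_{\delta_j}$ by the preceding theorem, the plan is to push this Legendre Thom polynomial through $\mathcal L_n$ and then through the Schur factorization. Fix $n=r+1$. By Kazarian's Theorem \ref{thm:kazarian} and \eqref{eq:L-tildeL}, $[\check\Sigma^{1,j}(n,1)]=e(J^1(n))\,\mathcal L_n(Q_{\delta_j})$. By \eqref{eq:lnq} we have $\mathcal L_n(Q_{\delta_j})=Q_{\delta_j}(y_1,\dots,y_n)$ with $y_i=u/2-\alpha_i$.

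The first key step is an explicit Schur expansion of $Q_{\delta_j}$. Since $\delta_j$ is a staircase, $Q_{\delta_j}=2^{j}s_{\delta_j}(x)\cdot\prod$-type identities hold. Concretely, $P_{\delta_j}=s_{\delta_j}$, so $Q_{\delta_j}=2^{j}s_{\delta_j}$ (a classical fact, via $P_{\delta}=s_\delta$ for staircases). Hence $\mathcal L_n(Q_{\delta_j})=2^j s_{\delta_j}(u/2-\alpha_1,\dots,u/2-\alpha_n)$.

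Next I expand a Schur polynomial of shifted variables: $s_\delta(y+z)=\sum_{\mu\subset\delta} c_{\delta/\mu}(n)\, z^{|\delta/\mu|} s_\mu(y)$, with $z=u/2$ and $y_i=-\alpha_i$. The standard binomial formula for Schur functions (Lascoux; equivalently the principal specialization of skew Schur functions) gives $c_{\delta/\mu}(n)=\prod_{(i,k)\in\delta/\mu}(n-i+k)\cdot s_{\delta/\mu}(1,\tfrac12,\tfrac1{3!},\dots)=E_{\delta/\mu}(n)$. This is the content-polynomial (hook-content) evaluation of $s_{\delta/\mu}(1^n)$-type coefficients, so the $E_{\delta/\mu}$ of the statement appears naturally. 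Together with $z^{|\delta/\mu|}=2^{-|\delta/\mu|}u^{|\delta/\mu|}$ and $s_\mu(-\alpha)=(-1)^{|\mu|}s_\mu(a)$, this yields the prefactor $2^{j-|\delta|+|\mu|}$. Here a sign and exponent bookkeeping is needed; I expect the factor $2^j$ combined with $2^{-|\delta/\mu|}$ to reproduce $2^{|\mu|-j(j-1)/2}$, since $|\delta|-j=j(j-1)/2$.

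Then I apply the Factorization Theorem \ref{thm:schur-factorization}(ii) exactly as in the proof of Proposition \ref{prop:poly} (Figure \ref{fig:factorization_n1}). Each term $e(J^1(n))s_\mu(a)u^i$ equals $(-1)^{|\mu|}\rho^{n\to1}(s_{n+i,\tilde\mu})$. The sign cancels the $(-1)^{|\mu|}$ from $s_\mu(-\alpha)$, and $n+i=r+1+|\delta|-|\mu|=d-|\mu|$. Finally, injectivity for $r\ge j-1$ follows from the proof of Theorem \ref{thm:poly4p=1}: the conjugates $\tilde\mu$ have $\tilde\mu_1\le j\le r+1$, so no thick partitions occur, and Observation \ref{obs:tptostable} lifts the equality to $\Tp$.

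The main obstacle is the binomial (shifted-variable) expansion of $s_\delta$ with the exact coefficient $E_{\delta/\mu}(n)$. I would prove it by writing $s_\delta(y+z)$ as a ratio of alternants and expanding $(y_i+z)^{\delta_k+n-k}$ binomially. The resulting determinant is $\det\binom{\delta_k+n-k}{\mu_l+n-l}$, and factoring the factorials gives $\prod(n-i+k)$ times $\det[1/(\delta_k-\mu_l-k+l)!]$, which is the stated $C_{\delta/\mu}$.
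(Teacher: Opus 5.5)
The computation is correct up to the last step. The identity $Q_{\delta_j}=2^jP_{\delta_j}=2^js_{\delta_j}$ holds. So does the shifted-variable expansion with coefficient $\det\bigl[\binom{\delta_k+n-k}{\mu_l+n-l}\bigr]=E_{\delta/\mu}(n)$. The power of two is $2^{j-|\delta|+|\mu|}=2^{|\mu|-j(j-1)/2}$, the two signs $(-1)^{|\mu|}$ cancel, and the first row is $n+|\delta|-|\mu|=d-|\mu|$. However, what this establishes is only that $\Tp(\check\Sigma^{1,j}(-r))$ and your right-hand side have the same image under $\rho^{r+1\to 1}$. In other words, you have the identity modulo $\ker\rho^{r+1\to1}=\langle s_\lambda:\lambda_2\ge r+2\rangle$.

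The gap is the final lifting to $\Tp$. That the right-hand side contains no thick partitions says nothing about the thick coefficients of $\Tp$ itself. To conclude, you need $\ker\rho^{r+1\to 1}$ to vanish in degree $d$, i.e.\ $d<2(r+2)$, which is exactly the hypothesis of Observation~\ref{obs:tptostable}. Since $\gamma(\check\Sigma^{1,j})=\binom{j+1}{2}+1$, this argument (like Theorem~\ref{thm:poly4p=1}) only covers $r\ge\gamma-3=\binom{j+1}{2}-2$, not $r\ge j-1$.

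For $j\ge 3$ the window $j-1\le r\le\binom{j+1}{2}-3$ is nonempty, and there the argument fails concretely:
\begin{itemize}
\item For $j=3$, $r=2$ one has $d=9\ge 8$, and the coefficients of $s_{5,4}$ and $s_{4,4,1}$ are invisible to $[\check\Sigma^{1,3}(3,1)]$. This is exactly the ambiguity recorded in the Remark on $P_8(-2)$ after the theorem.
\item For $j=3$, $r=3$ the coefficient of $s_{5,5}$ is likewise undetermined.
\end{itemize}
Neither $K_\eta$ nor the Thom series can supply such coefficients in general; compare the coefficient of $s_{2,2}$ in $A_4(0)$, where the series predicts 14 but the true value is 10.

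The vanishing of all thick coefficients is precisely the key result of Feh\'er--K\"om\"uves that the paper cites; the paper gives no independent proof. Closing the gap needs genuinely new input. One option is to compute the unstable class $[\Sigma^{r+1,j}(r+1+i,1+i)]$ for some $i\ge 1$ with $d<(r+2+i)(i+2)$. This could be done via a Ronga--Porteous type resolution over the Grassmannian of $(r+1)$-dimensional kernels, where the second intrinsic derivative is a symmetric form on the kernel with values in the one-dimensional cokernel. You would then have to check that its Schur expansion has no thick terms.

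As written, your argument proves the theorem for $j\le 2$ and for $r\ge\binom{j+1}{2}-2$. In between, it proves it only modulo thick Schur polynomials.
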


For example the stable Thom polynomials of the singularities $\Sigma^{r+1,j}(-r)$ for $j\le 2$ (cf.\ Table \ref{fig:thomboardman}) are
\begin{align*}
\Tp(\check{\Sigma}^{1,0}(-r))
&= \Tp(A_1(-r)) &{}={}& s_{r+1} \\[1pt]
\Tp(\check{\Sigma}^{1,1}(-r))
&= \Tp(A_2(-r)) &{}={}& 2s_{r+1,1}+(r+1)s_{r+2} \\[1pt]
\Tp(\check{\Sigma}^{1,2}(-r))
&= \Tp(D_4(-r)) &{}={}& 4s_{r+1,2,1}
+ 2r s_{r+2,1,1}
+ 2(r+2)s_{r+2,2}
+ r(r+2)s_{r+3,1}
+ \textstyle\binom{r+2}{3}s_{r+4}.
\end{align*}
Note that $\Tp(\check{\Si}^{1,3}(-r))=\Tp(P_8(-r))$, see \cite{Kazarian2003} for its Legendre Thom polynomial. Notice this is already not in Mather's nice range \cite{Mather1971} due to being a moduli of orbits. \bigskip

Indeed, comparing with \cite{MikoszPragaczWeber2009}, \cite{MikoszPragaczWeber2011}, we see that 
\begin{itemize}
	\item $\tilde K_{\check{\Sigma}^{1,1}}=\tilde K_{A_2}=Q_1$,
	\item $\tilde K_{\check{\Sigma}^{1,2}}=\tilde K_{D_4}=Q_{2,1}$,
	\item $\tilde K_{\check{\Sigma}^{1,3}}=\tilde K_{P_8}=Q_{3,2,1}$.
\end{itemize}
And by identifying $\check{\Sigma}^{1,4}$ as $O_{16}$ \cite{Arnold1973} we obtain the Legendre Thom polynomial: $\tilde{K}_{\check{\Sigma}^{1,4}}=\tilde K_{O_{16}}=Q_{4,3,2,1}$. Arnold in \cite[\S 4.5]{Arnold1973} notes that these singularities are the ones representable as a sum of $j$ cubes.

\begin{remark}
	
	The first uncertainty in the stable Thom polynomial of $\check{\Si}^{1,j}$ appears at $j=3$: the Legendre Thom polynomial of $P_8=Q_{3,2,1}$ implies that
	\begin{align*}
	\Tp(\check\Si^{1,3}(-2))=\Tp(P_8(-2))=\,&8 s_{3,3,2,1} + 4 s_{4,2,2,1} + 12 s_{4,3,1,1} + 20 s_{4,3,2} + 6 s_{5,2,1,1} + 10 s_{5,2,2} + 30 s_{5,3,1} \\
	&{}+ 2 s_{6,1,1,1} + 15 s_{6,2,1} + 20 s_{6,3} + 5 s_{7,1,1} + 10 s_{7,2} + 4 s_{8,1} + s_{9}
	\end{align*}
	up to thick partitions $s_{4,4,1}$ and $s_{5,4}$. But a key result of \cite{FeherKomuves} is  that the  coefficients for thick partitions are all  zero for all $\check{\Si}^{1,j}$.
\end{remark}

\section{Beyond function singularities} \label{sec:beyond}
We expect similar results for non function singularities, but we don't have a good generalization of the quadratic unfolding yet. We can try to guess which Thom-Boardman classes fit into a sequence. One possible candidate is $\check\Sigma^{k,J}(-r)$, $r\geq k-1$ for any given $k>1$ and a partition $J$. Recall that  $\check\Sigma^{k,J}(-r)=\Sigma^{k+r,J}(-r)$. For  $J=(1)$ closed formulas were given in \cite[Theorem 4.10]{FeherKomuves}:

\begin{theorem}\label{thm:sigmai1}
	The stable Thom polynomial of $\Sigma^{i,1}(l)$, $l\geq 1-i$ is
	\begin{equation}\label{eq:Sigmai1}
	\Tp(\Sigma^{i,1}(l)) =
	\sum_{(\nu,\mu)\in I} F_{\nu/\mu}(i)\cdot
	s_{(i^{(l+i)}+\mathcal{C}\widetilde\nu,\widetilde\mu)}
	\end{equation} 
	where $I=\{(\nu,\mu )\::\:\nu\subset (l+i)^i,\; l(\mu)\le i,\; |\nu|-|\mu|=i-1\}$, $\widetilde\nu$ denotes the conjugate partition,  $\mathcal{C}$ is the complement in the $i^{l+i}$ box, and
	\[ \gbinom{n}{k} := \sum_{j=0}^k \binom{n}{j}
	\quad\quad\textrm{and}\quad\quad
	F_{\nu/\mu}(n):=
	\det \left[ \gbinom{\nu_k+n-k}{\mu_l+n-l} \right ]_{k,l\in n\times n}.
	\]
\end{theorem}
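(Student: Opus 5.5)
The plan is to compute $\Tp(\Sigma^{i,1}(l))$ from a resolution of the closure $\overline{\Sigma^{i,1}}$, and then convert the resulting Gysin pushforward into the Schur expansion \eqref{eq:Sigmai1}.

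\emph{Step 1: the resolution.} Work with the source and target bundles $A$ (rank $n$) and $B$ (rank $p=n+l$), and form the Grassmannian $\pi:\mathrm{Gr}_i(A)\to\mathrm{pt}$ with tautological subbundle $K$ of rank $i$. On the total space consider the incidence locus of jets $f$ with $K\subset\ker df$. This is cut out transversally by the vanishing of $df|_K\in\Hom(K,B)$, which contributes the Euler class $e(\Hom(K,B))$. Over this locus the cokernel is the virtual bundle $Q=B-A+K$, of rank $i+l$.

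\emph{Step 2: the second-order condition.} The intrinsic second derivative gives a map $d^2f:K\to\Hom(K,Q)$. The closure $\overline{\Sigma^{i,1}}$ is the locus where this map has a nontrivial kernel, i.e.\ drops rank from $i$. Ignoring the symmetry of $d^2f$, this is a Porteous degeneracy locus. Its class is
\[
c_{i(i+l)-i+1}\bigl(\Hom(K,Q)-K\bigr).
\]
Here I would check two points:
\begin{itemize}
\item the symmetry of $d^2f$ does not change the expected codimension, since the rank condition is imposed on one slot with the other slot free;
\item the resulting map onto $\overline{\Sigma^{i,1}}$ is birational, which holds because generically the kernel of $df$ has dimension exactly $i$.
\end{itemize}
Combining Steps 1 and 2 and using Damon's Theorem \ref{damon} to pass to quotient variables, I expect
\[
\Tp(\Sigma^{i,1}(l))=\pi_*\Bigl(e(\Hom(K,B))\cdot c_{i(i+l)-i+1}\bigl(\Hom(K,B-A+K)-K\bigr)\Bigr).
\]

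\emph{Step 3: evaluating the pushforward.} I would first factor out the Porteous piece $e(\Hom(K,B-A))$. Pushed forward alone, it yields $s_{i^{(l+i)}}$, the Giambelli--Thom--Porteous class of $\Sigma^i$. The remaining factor is a characteristic class of $\Hom(K,K)-K$ twisted by the quotient variables. I would expand it with Lascoux's formula for the Chern classes of a tensor product $K^\vee\otimes(\cdot)$; this formula has determinantal coefficients built from binomial coefficients. These are exactly the sources of the $\gbinom{\nu_k+n-k}{\mu_l+n-l}$ entries in $F_{\nu/\mu}(i)$, where the partial binomial sums come from the extra virtual $-K$ term. Finally I would push forward along the Grassmannian using the Jacobi--Trudi / Józefiak--Lascoux--Pragacz Gysin formula: a product $s_\alpha(K^\vee)\,s_\beta(\text{quotient})$ pushes forward to a single Schur polynomial whose shape is the box $i^{(l+i)}$ augmented by $\alpha$ and $\beta$. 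This explains the indexing partition $(i^{(l+i)}+\mathcal{C}\widetilde\nu,\widetilde\mu)$, where $\nu$ records the $K$-part (complemented inside the box) and $\mu$ the quotient part. The constraint $|\nu|-|\mu|=i-1$ is a degree count: the codimension is the Porteous codimension plus $i(i+l)-i+1$.

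\emph{Main obstacle.} The hard step will be this last combinatorial identification: showing that the coefficients produced by Lascoux's formula, after the Gysin map and any straightening, collapse to the single determinant $F_{\nu/\mu}(i)$ with the stated index set $I$. I would try first to prove it for $i=1,2$ against the known $\Tp(A_2(l))$ and small cases, and then argue in general by a Lindström--Gessel--Viennot lattice-path interpretation of both sides.
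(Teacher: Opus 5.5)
The paper does not prove this statement; it quotes it from \cite{FeherKomuves}. Judged on its own, your outline has a genuine gap in Step~2, and the gap sits exactly where the nontrivial content of the theorem lies.

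\textbf{The gap in Step~2.} On $Z=\{(K,f):df|_K=0\}$ the cokernel of $df$ is not a vector bundle. The locus $Z'\subset Z$ where $\ker df\supsetneq K$ has codimension only $i+l+1$. This is at most the codimension $i(i+l)-i+1$ of the second-order condition as soon as $(i-1)(i+l)\geq i$. So there is no bundle map $K\to\Hom(K,Q)$ to which Porteous applies. The formal class $c_{i(i+l)-i+1}(\Hom(K,B-A+K)-K)$ agrees with the class of the $\Sigma^{i,1}$-locus only off $Z'$.

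The error is not harmless. For the virtual bundle $Q=B-A+K$, the expansion of this Chern class in $s_\alpha(K)s_\beta(Q)$ contains terms with $\ell(\beta)>i+l$. These would vanish for an honest rank $i+l$ bundle, but not here. They push forward to classes supported on $\overline{\Sigma^{i+1}}$, i.e.\ into the span of thick partitions.

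For instance, take $i=2$, $l=2$, where $\codim\Sigma^3(2)=15=\deg\Tp(\Sigma^{2,1}(2))$. A direct computation at $(n,p)=(3,5)$ shows that the terms with $\ell(\beta)\geq 5$ (only $s_{1^5}(Q),s_{1^6}(Q),s_{1^7}(Q)$ survive) contribute $-12\,e(\Hom(A,B))=\rho^{3\to5}(-12\,s_{3^5})$. So your displayed formula assigns coefficient $-12$ to $s_{3^5}$, whereas the theorem gives $0$.

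Your Step~3 claim also depends on this. It says a product pushes forward to the single shape $(i^{(l+i)}+\beta,\alpha)$, but that holds only for $\ell(\beta)\le i+l$; longer $\beta$ give straightened thick shapes or zero. So Step~3 silently uses a truncation that Step~2 does not provide. Your formula is correct modulo thick partitions. However, that much already follows from the case $n=i$ together with Theorem~\ref{thm:schur-factorization}. The vanishing of the thick coefficients is the part that needs the geometry.

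\textbf{How to repair it.} Parametrize the image as well: work on
$\{(K,C,f): K\in\mathrm{Gr}_i(A),\ C\in\mathrm{Gr}_{n-i}(B),\ df(K)=0,\ df(A)\subset C\}$.
\begin{itemize}
\item Now $Q=B/C$ is an honest rank $i+l$ bundle, and the $2$-jet maps onto $\Hom(\Sym^2K,B/C)$.
\item Hence the degeneracy locus of $K\to\Hom(K,B/C)$ is irreducible, generically reduced, of the expected codimension (your codimension check), and birational onto $\overline{\Sigma^{i,1}}$. Porteous then applies honestly.
\item Push forward along the $C$-Grassmannian using $\varpi_*\bigl(e(\Hom(A/K,B/C))\,s_\beta(B/C)\bigr)=s_\beta(B-A+K)$ for $\ell(\beta)\le i+l$.
\end{itemize}
This yields your formula with the $Q$-expansion truncated to $\beta\subset i^{(l+i)}$, and that truncation is precisely why only nice shapes occur.

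\textbf{The combinatorial step.} Once the truncation is in place, you do not need lattice paths. Let $x_k$ be the Chern roots of $K^\vee$. Then:
\begin{itemize}
\item Expand $\prod_{k,j}(1+x_k+q_j)$ by the dual Cauchy identity in the $i\times(i+l)$ box.
\item Write each $s_\nu(1+x_1,\ldots,1+x_i)$ as a bialternant.
\item Divide by $c(K)=\prod_k(1-x_k)$ entrywise, using $(1+x)^p/(1-x)=\sum_m\gbinom{p}{m}x^m$.
\item Apply Cauchy--Binet.
\end{itemize}
This produces exactly the determinants $F_{\nu/\mu}(i)$, together with the constraint $|\nu|-|\mu|=i-1$.

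\textbf{A minor correction.} The Porteous factor to split off is $e(\Hom(K,B))$, not $e(\Hom(K,B-A))$. The latter has negative virtual rank $il$ when $l<0$.
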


The support  of Schur polynomials is no longer fixed: as $r$ increases new non-zero coefficients appear. Indeed, the case of
$\check{\Sigma}^{2,1}(-r)$ already demonstrates this:
By Corollary \ref{cor:ThomBoardmankJ} (i), the codimension of $\check{\Si}^{2,1}(-r)$ increases by 3 as $r$ increases. 
However, in \eqref{eq:Sigmai1} $i^{l+i}=i^2$, so as $r$ increases, for fixed $\mu$ this contributes only 2. The remaining increase is due to the fact that in the indexing set, $|\nu|-|\mu|=i-1$. This means that for fixed $\mu$, the different $\nu$'s have to be identified how they stabilize. In particular, we see a new phenomenon: the support of Schur indices is not finite.\smallskip

In this case the indexing remains tractable: since $\nu\subset 2^i$, it can be written as $\nu=(2^a,1^b)$ with $a+b\leq i$. In this case, we can show a polynomial stabilization of the coefficients of the Thom polynomials \eqref{eq:Si21} -- the determinant defining $F_{\nu/\mu}(i)$  reduces to a fixed-size determinant whose entries are binomial-sum polynomials in $i$. 
\begin{example}[The stable Thom polynomial of $\check{\Si}^{2,1}(2-i)$]
	We apply Theorem \ref{thm:sigmai1} and introduce a stabilization pattern: Fix $i\geq 1$ and consider the set of pairs of $(a,\xi)$, where $\xi$ is a partition. Then---since $i+l=2$---all partitions $(i^{(l+i)}+\mathcal{C}\widetilde\nu,\widetilde\mu)$ occurring in \eqref{eq:Sigmai1}
are of the form
	\[\lambda(i,a,\xi):=(2i-a,i+a-|\xi|+1,\xi)\]
	where $(a,\xi)$ are elements of the following indexing set:
	\[I_i=\{(a,\xi):0\leq a\leq i,\ |\xi|\leq a+1,\ \xi_1\leq i,\ 2a\leq|\xi|+i-1\}.\]
	Indeed:
	\begin{itemize}
		\item writing $\xi=\tilde{\mu}$, we have $\ell(\mu)\leq i$ iff $\xi_1\leq i$;
		\item writing $\nu=(2^a,1^b)$; the condition $|\nu|-|\xi|=i-1$ becomes
		\[|\xi|=|\nu|-i+1=2a+b-i+1,\] so $b\geq 0$ gives the next condition: $2a\leq |\xi|+i-1$;
		\item finally $\nu\subset 2^i$ iff $a+b\leq i$ iff $2a+b-i+1=|\xi|\leq a+i-i+1$.
	\end{itemize}
 These are the conditions defining $I_i$. Next, note that  given $\nu=(2^a,1^b)$ and $\xi=\tilde{\mu}$, the partition appearing in \eqref{eq:Sigmai1} is
	\[(i^2+\mathcal{C}\tilde{\nu},\tilde{\mu})=(2i-a,2i-a-b,\xi)=\la(i,a,\xi).\]
	Then for every fixed pair $(a,\xi)$ satisfying $|\xi|\leq a+1$, setting $\be(a,\xi):=2a+1-|\xi|$ the coefficient of $s_{\la(i,a,\xi)}$ is $p_{a,\xi}(i):=F_{(2^a,1^{i-\be(a,\xi)})/\tilde{\xi}}(i)$, which can be shown to be a polynomial of degree $\leq \be(a,\xi)$:		
		\begin{equation}\label{eq:Si21}
		\Tp(\check{\Sigma}^{2,1}(2-i))=\sum_{(a,\xi)\in I_i}
		p_{a,\xi}(i)
		s_{\lambda(i,a,\xi)}.
				\end{equation}
	So in conclusion, for this appropriate stabilization of indices (i.e.\ $\la(i,a,\xi)$), polynomiality of the coefficients $p_{a,\xi}$ does hold, however the support of Schur polynomials is no longer finite; we will explicitly show this in Remark \ref{rmk:infinitesupport}.
\end{example}
We write out the first few terms of the sum:
\[
\begin{array}{cc}
\begin{aligned}
\Tp(\check\Sigma^{2,1}(2-i))
={}&
\Tp(\Sigma^{i,1}(2-i))
\\[1mm]
={}&
i\,s_{2i,i+1}
+
\frac{i(i-2)(i+1)}2\,s_{2i-1,i+2}
+
\frac{i^2(i-4)(i-1)(i+1)}{12}\,s_{2i-2,i+3}
&&(\xi=\varnothing)
\\[1mm]
&+
2s_{2i,i,1}
+
2(i-1)(i+1)s_{2i-1,i+1,1}
+
\frac{i^2(i-3)(i+1)}2s_{2i-2,i+2,1}
&&(\xi=(1))
\\[1mm]
&+
2(i+1)s_{2i-1,i,1,1}
+
\frac{(i-2)(2i^2+5i+1)}2s_{2i-2,i+1,1,1}
\\
&+
\frac{i(i-4)(i-1)(i^2+3i-1)}6s_{2i-3,i+2,1,1}
&&(\xi=(1,1))
\\[1mm]
&+
2(i+1)s_{2i-1,i,2}
+
\frac{(i-2)(i+1)(2i+1)}2s_{2i-2,i+1,2}
\\
&+
\frac{i(i-4)(i+1)(i^2-i+1)}6s_{2i-3,i+2,2}
&&(\xi=(2))
\\[1mm]
&\quad+\ \text{further terms with }|\xi|\ge 3 \text{ or }a\geq 3
\end{aligned}
\end{array}
\]
\begin{example}\label{ex:coeff}
For instance, we compute the coefficient for $\la(i,1,(1))$, i.e.\ $s_{2i-1,i+1,1}$; it is 
\[F_{(2,1^{i-2}),(1)}(i)=\det\left[ \gbinom{\nu_k+i-k}{\mu_l+i-l}\right]_{k,l\in i\times i}=\det\left[ \gbinom{\ga}{\rho}\right]_{\ga\in C, \rho\in R},
\]
where $C=(i+1, i-1, i-2\stb 2,0)$ and $R=(i,i-2,i-3\stb 0)$. Taking row differences, since $\gbinom{\ga}{\rho}-\gbinom{\ga}{\rho-1}=\binom{\ga}{\rho}$, this is further equal to 
\[
\det\left[ \binom{\ga}{\rho}\right]_{\ga\in C, \rho\in R}+\det\left[ \binom{\ga}{\rho}\right]_{\ga\in C, \rho\in R'}
\]
where $R'=(i-1,i-2\stb 0)$; denote the  two terms by $D_1$ and $D_2$. Note that these are minors of the Pascal matrix $P=\left[\binom{\ga}{\rho}\right]_{k,l=0}^{i+1}$, where $D_1$ corresponds to the deleted rows $(i-1,i+1)$, columns $(1,i)$ and $D_2$ to the deleted rows $(i,i+1)$ and columns $(1,i)$. Then Jacobi's complementary minor identity, using $\det(P)=1$ gives
\[\det(P[I,J])|=\det(P^{-1}[J^c,I^c]),\] 
and since $P^{-1}=\left[(-1)^{\ga+\rho}\binom{\ga}{\rho}\right]$, we have
\[
D_1=\det\left(\begin{array}{cc}
\binom{i-1}{1}&-\binom{i+1}{1}  \vspace{0.5pc}\\
-\binom{i-1}{i}&\binom{i+1}{i}
\end{array}\right)=(i-1)(i+1), \qquad D_2=\det\left(\begin{array}{cc}
\binom{i}{1}&-\binom{i+1}{1}  \vspace{0.5pc}\\
-\binom{i}{i}&\binom{i+1}{i}
\end{array}\right)=i(i+1)-(i+1),
\]
and summing, we get that the  value of the coefficient is $2(i-1)(i+1)$.
\end{example}
\begin{example}
	For $i=1$ we have \[\Tp(\check\Sigma^{2,1}(1))=\Tp(\Sigma^{1,1}(1))=s_{2,2}+2s_{2,1,1}+4s_{1,1,1,1},  \]
	and for  $i=2$ we have
	\[\Tp(\check\Sigma^{2,1}(0))=\Tp(\Sigma^{2,1}(0))=  \]
	\[2s_{4,3}+2s_{4,2,1}+6s_{3,3,1}+6s_{3,2,1,1}+8s_{2,2,2,1}+4s_{2,2,1,1,1}+6s_{3,2,2}.\]
\end{example}

\begin{remark}\label{rmk:infinitesupport} 
	Although for each $i$, $I_i$ is finite, the support of Schur polynomials is infinite. For instance, 
	for any $N\geq 1$, take
	\[
	\xi=(1^N),\qquad a=N-1.
	\]
	Then 
	\[\la(i,a,\xi)=(2i-N+1,\ i,\ 1^N).\] 
	These terms have tails of arbitrarily large length $N$.  Moreover their
	coefficients are nonzero; explicitly, one can show (by a computation very similar to the one in Example \ref{ex:coeff})
	\[
	p_{a,\xi}(i)
	=
	2\left(\binom{i+1}{N-1}-\binom{i-1}{N-3}\right)
	,
	\]
	which is a polynomial of degree $N-1$. It is also possible to show that $p_{a,\xi}$ is the zero polynomial if $\xi_3\geq2$.	
\end{remark}
\section{Enumerative applications} \label{sec:enum}
Thom polynomials can be used to solve problems in enumerative geometry \cite{Kleiman1977}, \cite{Kazarian2003b}.  A companion paper on this topic is under preparation \cite{FeherMatszangoszupcoming}, where the authors will apply the results of the current paper.  Here we only mention some possibilities without explaining the method.
\subsection{Enumerative applications of stable Thom polynomials}
\label{sec:enum-stable}
We start with the following known enumeration: given a generic degree $d$ hypersurface in $\PP^4$, how many 2-planes intersect it in an $E_6$ singularity? This can be computed using the Thom polynomial $[E_6(2,1)]$ to be: $15d(19d-36)(d-3)(d-2)$, see \cite[\S 10]{Kazarian2003b}, \cite{LeePatelTseng2023}. A generalization, using our calculation of $\Tp(E_6(-1))$ is
\begin{theorem} \label{thm:e6pts}
Given a generic, bidegree $(d,e)$ complete intersection 3-fold $X$ in $\PP^5$ and a general point $P$, for $e\cdot d$ large enough, the number of projective 3-planes passing through $P$ and intersecting $X$ in an $E_6$ singularity is
\[
3 \cdot e \cdot d \cdot (95 d^{3} + 270 d^{2} e + 270 d e^{2} + 95 e^{3} - 925 d^{2} - 1778 d e - 925 e^{2} + 2978 d + 2978 e - 3228).
\]
\end{theorem}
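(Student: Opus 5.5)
We treat this as a standard Thom polynomial enumeration. The 3-planes through $P$ form the Grassmannian $G=\mathrm{Gr}(3,4)$ of 3-dimensional linear subspaces $\Lambda\subset\C^5=T_P$-directions, which has dimension $4$. Over $G$ we build the incidence variety $N=\{(\Lambda,x): x\in \PP(\hat\Lambda)\cap X\}$, where $\hat\Lambda\subset\C^6$ is the 4-dimensional linear span of $P$ and $\Lambda$. $N$ is a 2-dimensional family of 2-dimensional surfaces, so $\dim N=6$. For each $(\Lambda,x)$, $\PP(\hat\Lambda)\cap X$ is locally the zero set of $(F_d,F_e)$ restricted to the 3-plane, i.e.\ a map germ $\C^3\to\C^2$. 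Equivalently, consider the map $f\colon \tilde N\to$ (rank 2 target bundle) where $\tilde N=\{(\Lambda,x):x\in\PP(\hat\Lambda)\}$ has $\dim \tilde N=7$ and the source bundle is the relative tangent bundle $T_{\PP(\hat\Lambda)}$ (rank 3), while the target is $\mathcal O(d)\oplus\mathcal O(e)$. An $E_6$ point of the intersection surface is exactly a point where the section $(F_d,F_e)$ vanishes and its germ has type $E_6(3,2)$, i.e.\ the relative codimension $\ell=-1$ stable singularity $E_6(-1)$ of codimension $\gamma+r=6+1=7$. This matches $\dim\tilde N=7$, so generically finitely many points occur, and they are counted by $\int_{\tilde N}$ of the Thom polynomial class $[E_6(-1)]$ evaluated on the jet bundle.

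The key steps are as follows. First, set up the zero-locus formalism as in Kazarian's approach. The relevant class is $\rho^{3\to2}\Tp(E_6(-1))$ multiplied by the Euler class of the target $\mathcal O(d)\oplus\mathcal O(e)$ (accounting for $F=0$), with $a_i=c_i(T_{\mathrm{rel}})$ and $b_i=c_i(\mathcal O(d)\oplus\mathcal O(e))$. In fact, with the contact-jet convention, the class is $\Tp(E_6(-1))$ in $c=c(B-A)$ times $e(B)$. Second, use $\Tp(E_6(-1))$ from Theorem~\ref{thm:thom_series} at $r=1$, with the corrections listed in Section~\ref{sec:small_r}: the coefficient of $s_{3,3,1}$ is $78$, that of $s_{4,3}$ is $54$, and $s_{3,4}$ is omitted. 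Third, compute $H^*(\tilde N)$. Here $\tilde N=\PP(\hat S)$ over $G$, where $\hat S=\mathcal O\oplus S$ is the rank-4 bundle ($S$ tautological rank 3 on $\mathrm{Gr}(3,5)$ modulo the fixed line, or more precisely $\hat S$ is the tautological subbundle of $\mathrm{Gr}(4,6)$ containing the fixed line). Then $T_{\mathrm{rel}}=\mathrm{Hom}(\mathcal O(-1),\hat S/\mathcal O(-1))$ and $h=c_1(\mathcal O(1))$ satisfies $\sum h^{4-i}c_i(\hat S^\vee)$-type relation. Fourth, push forward: expand $c(B-A)$ in $h$ and Chern classes of $S$, integrate over the $\PP^3$-fibres using the Segre class relation, then integrate the resulting degree-4 class on $\mathrm{Gr}(3,4)\cong\PP^4$ (dual), where $c(S)$ is explicit.

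The main obstacle is the bookkeeping of the genericity and positivity hypotheses ("$e\cdot d$ large enough"). These are needed so that the section is transverse to all contact orbits of codimension $\le 7$. Jet surjectivity by degree-$(d,e)$ polynomials guarantees this once $d,e\ge 5$ or so. The same hypotheses exclude contributions from non-$E_6$ loci in the closure (e.g.\ $E_7$, $X_9$ are codimension $\ge8$ and hence absent). The remaining heavy but mechanical part is the symbolic integration. I would carry it out by equivariant localization on $\PP^5$ with torus fixed points, or directly with a computer algebra Schubert calculus package. The result would then be checked against the known $\PP^4$ count $15d(19d-36)(d-3)(d-2)$ as a consistency check, via a degeneration $e=1$ which reduces $X$ to a hypersurface in $\PP^4$ with the point $P$ generic.
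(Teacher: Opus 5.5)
Your strategy is the one the paper indicates: realize the count as $\int e(B)\,\rho^{3\to 2}\Tp(E_6(-1))$ for the fibrewise section $(F_d,F_e)$, with $A=T_{\mathrm{rel}}$ and $B=\mathcal{O}(d)\oplus\mathcal{O}(e)$, using $\Tp(E_6(-1))$ at $r=1$ with the corrected coefficients $78$ of $s_{3,3,1}$ and $54$ of $s_{4,3}$. The paper only sketches this and defers the details to the companion paper. However, the geometric set-up you wrote down is wrong, and the integral as you describe it would not produce the number.

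\textbf{The parameter space.} Projective $3$-planes through $P$ correspond to $3$-dimensional subspaces of $\C^6/\ell_P\cong\C^5$. So the parameter space is $G=\mathrm{Gr}(3,5)$, of dimension $6$. You name $\mathrm{Gr}(3,5)$ once, but otherwise work with a $4$-dimensional ``$\mathrm{Gr}(3,4)\cong\PP^4$''; note that $\mathrm{Gr}(3,4)$ is in fact $3$-dimensional.

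\textbf{The intersection.} $X\cap\PP(\hat\Lambda)$ is a curve ($3+3-5=1$), not a surface. Hence $\dim\tilde N=9$ and the zero locus $N$ of $(F_d,F_e)$ has dimension $7$. Finiteness comes from $\codim E_6(-1)=7$ inside $N$.

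\textbf{Why your computation fails.} Your check ``codim $7$ matches $\dim\tilde N=7$'' forgets the two conditions $F_d=F_e=0$, and this is masked by the wrong $\dim G$. As written, you would integrate the degree-$9$ class $e(B)\rho^{3\to2}\Tp(E_6(-1))$ over a $7$-dimensional space and get $0$. The push-forward along the $\PP^3$-fibres must instead give a degree-$6$ class on $\mathrm{Gr}(3,5)$, to be evaluated by Schubert calculus there, not a degree-$4$ class on $\PP^4$.

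\textbf{The corrected plan works.} With $G=\mathrm{Gr}(3,5)$, you can already see the role of the corrected coefficients in the top-degree part. The part of degree $5$ in $(d,e)$ is $de\int_{\tilde N}h^5(\cdots)$. This is an integral over the fibre of $\tilde N\to\PP^5$ over a general point $x$, namely $\mathrm{Gr}(2,4)$ (the $3$-planes containing the line $Px$). On this fibre $c(T_{\mathrm{rel}})=c(U)$ and $c(-T_{\mathrm{rel}})=c(Q)$, where $U,Q$ are the tautological sub- and quotient bundles. Write $\Tp(E_6(-1))=\sum_\lambda a_\lambda s_\lambda$, and let $c^\lambda_{\mu,\nu}$ denote Littlewood--Richardson coefficients. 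Using $\int_{\mathrm{Gr}(2,4)}s_\nu(Q)=\delta_{\nu,(2,2)}$ in degree $4$, the top-degree part becomes
\[
de\sum_\lambda a_\lambda\bigl(c^\lambda_{(3),(2,2)}s_3(d,e)+c^\lambda_{(2,1),(2,2)}s_{2,1}(d,e)\bigr)=de\bigl(285\,s_3(d,e)+525\,s_{2,1}(d,e)\bigr),
\]
with Schur polynomials in the two variables $d,e$. This is exactly $3de(95d^3+270d^2e+270de^2+95e^3)$. Here
\[
525=a_{4,3}+a_{4,2,1}+a_{3,3,1}+a_{3,2,2}+a_{3,2,1,1}+a_{2,2,2,1},
\]
which needs the corrected values $a_{3,3,1}=78$ and $a_{4,3}=54$. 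The Thom-series values $150$ and $90$ would give $633$ instead. So once you fix the parameter space and dimension bookkeeping, your computation should reproduce the stated formula.
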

 For this result the Legendre Thom polynomial of $E_6$ is not enough, we also need the two extra coefficients from Section \ref{sec:small_r}. For instance, for $d=3,e=4$, it gives 161352. 

 Notice that setting $e=1$, and intersecting the problem with the corresponding hyperplane translates the problem to the problem of Kazarian mentioned above; the Schubert condition $\bra W,P\ket$ gives the 3-plane in $\PP^5$.
Indeed, the formula above with $e=1$ specializes to
\[3d(95d^3+(270-925)d^2+(270-1778+2978)d+(95-925+2978-3228))=15d(19d-36)(d-3)(d-2).\]
\subsection{Enumerative applications of unstable Thom polynomials} \label{sec:enum-unstable}
Results of Section \ref{sec:binary} can be used to calculate the degree of $\eta$-discriminants:
\begin{definition}For a singularity $\eta(n,1)$ and $d$ positive integer the $\eta$-discriminant $D_\eta(d)$ is the variety\[D_\eta(d)=\{ F\in \pol^d(\C^{n+1}):\ (F=0) \text{ admits an $\eta$-type singularity} \}.\]
\end{definition}
A sample result---using Corollary \ref{cor:21k}---is
\begin{theorem} For $\eta=\eta_{(2,1^{s})}(2,1)$ the (projective) degree of the  $\eta$-discriminant is	
\begin{equation}
\alpha(d-s-1)(\beta d-\gamma).
\end{equation}
where 
\[\alpha=\frac{1}{2}(s+1)(s+2)(s+3),\ \ \beta=\frac{1}{2}(s+3)(s+2)+1,\ \ \gamma=\alpha+2s+5.\]
\end{theorem}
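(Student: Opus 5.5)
The plan is to realize $D_\eta(d)$ as the image of an incidence variety and push forward the Thom polynomial of Corollary~\ref{cor:21k}. Put $k=s+2$, so that $\eta=\eta_{(2,1^{s})}$ is the binary singularity of degree $k$. Write $V=\pol^d(\C^3)$ and $\PP=\PP V$, and let $h$ be the hyperplane class on $\PP$. On $\PP^2$ let $H$ be the hyperplane class, and let $L=\mathcal O_{\PP^2}(d)\boxtimes\mathcal O_{\PP}(1)$ on $\PP^2\times\PP$. The tautological section $F$ of $L$ has, at each $x\in\PP^2$, a $(k-1)$-jet in the principal parts bundle $J^{k-1}L$ of rank $N=\binom{k+1}{2}$. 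This jet includes the value $F(x)$, so it records the condition that the curve passes through $x$.

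The incidence variety is $Z=\{(x,F): j^{k-1}_xF=0\}$. For $d\ge k-1$ the evaluation map $V\to J^{k-1}_x$ is surjective, so $Z$ is a projective bundle over $\PP^2$ and $[Z]=e(J^{k-1}L)$. Inside $Z$, the locus of $\eta$-type points is cut out by the requirement that the degree-$k$ part of $F$ at $x$ lies in $\overline{Y}_{(2,1^s)}$. By Proposition~\ref{prop:binary} and Corollary~\ref{cor:21k}, this is the restriction of the single extra factor $(k-1)(2u-ka_1)$. Here we substitute $u=c_1(L)=dH+h$ and $a_1=c_1(T\PP^2)=3H$ (the source bundle is $T_x\PP^2$). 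The cleanest way to see this is Kazarian's set-up: apply the Thom polynomial to the vertical jet map. Counting dimensions, $\codim D_\eta=N+1-2=N-1$, and $Z\cap\{\eta\}\to D_\eta$ is generically injective, since a generic curve in $D_\eta$ has a unique such point. Hence
\[
\deg D_\eta(d)=\int_{\PP^2\times\PP}e(J^{k-1}L)\,(k-1)\bigl(2(dH+h)-3kH\bigr)\,h^{\dim\PP-N+1}.
\]
Only the coefficient of $H^2h^{\dim\PP}$ contributes. This coefficient is read off without knowing $\dim\PP$, because $e(J^{k-1}L)$ is a polynomial in $h$ with coefficients in $H^*(\PP^2)$.

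For the actual computation, use the filtration of $J^{k-1}L$ with graded pieces $\Sym^iT^*\PP^2\otimes\mathcal O(d)\otimes\mathcal O_{\PP}(1)$. This gives
\[
e(J^{k-1}L)=\prod_{0\le i+j\le k-1}\bigl(h+dH-i\beta_1-j\beta_2\bigr),
\]
where $\beta_1,\beta_2$ are the Chern roots of $T\PP^2$, with $\beta_1+\beta_2=3H$ and $\beta_1\beta_2=3H^2$. Writing each factor as $h+\ell_{ij}$, we need the terms of degree $\le 2$ in the $\ell$'s. Multiplying by $(k-1)(2h+(2d-3k)H)$, the coefficient of $H^2h^{N+1}$ equals
\[
(k-1)\Bigl(2\,e_2(\ell)+(2d-3k)H\,e_1(\ell)\Bigr)\Big/H^2 .
\]
So it remains to evaluate the power sums $\sum\ell_{ij}$ and $\sum\ell_{ij}^2$ over the triangle $i+j\le k-1$. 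This requires only the lattice-point sums $\sum 1$, $\sum i$, $\sum i^2$, $\sum ij$, and the relations $\beta_1^2+\beta_2^2=3H^2$ and $\beta_1\beta_2=3H^2$. Each resulting coefficient is a polynomial in $d$ of degree $\le 2$, with coefficients polynomial in $s$.

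The main obstacle is bookkeeping rather than concept. First, the sign and normalization conventions in Corollary~\ref{cor:21k} (source versus dual roots, and whether $u$ includes the $\mathcal O(d)$ twist) must match the geometric set-up. Second, the resulting quadratic in $d$ must be shown to factor as $\alpha(d-s-1)(\beta d-\gamma)$. For the conventions, I would first check the case $s=0$, that is $\eta=A_2$. There the formula must recover the classical cusp discriminant degree, $12(d-1)(d-2)$, and indeed $\alpha=3$, $\beta=4$, $\gamma=8$ give $3(d-1)(4d-8)$. For the factorization, the root $d=s+1=k-1$ is forced geometrically: for $d<k-1$ no curve has a point of multiplicity $k$. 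Once this is verified, the leading coefficient and the constant term determine $\beta$ and $\gamma$, and I would match them against the stated $\alpha\beta$ and $\alpha\gamma(s+1)$.
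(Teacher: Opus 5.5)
Your proposal is correct and follows essentially the route the paper intends. The paper only says the result comes from Corollary~\ref{cor:21k}; your argument feeds that Thom polynomial into the incidence-variety pushforward over $\PP^2$, with source bundle $T\PP^2$ and $u=c_1(\mathcal O(d)\boxtimes\mathcal O(1))$, and this matches the stated formula. Two small remarks: the coefficient to extract is that of $H^2h^{N-1}$ (not $H^2h^{N+1}$), and the lattice sums $\sum i=\binom{k+1}{3}$, $\sum i^2=\binom{k+2}{4}+\binom{k+1}{4}$, $\sum ij=\binom{k+1}{4}$ give exactly $\alpha(d-s-1)(\beta d-\gamma)$ by direct algebra, so you do not need the somewhat loose geometric argument for the root $d=s+1$.
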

\subsection{Enumerative applications of  Thom series}
Using Theorem \ref{thm:thom_series} we can obtain closed formulas for enumerations depending on $n$ and $d$ as well. These ideas will be explored in the forthcoming paper \cite{FeherMatszangoszupcoming}.

\section{Open problems} \label{sec:open}
In this paper we formulated a structural theorem for the Thom series of function singularities, and computed Thom polynomials of a wide range of singularities in various generalities (unstable, stable, Legendre). However, several interesting questions remain.
\begin{enumerate}
	\item It would be desirable to generalize the notion of quadratic unfolding and Thom series for contact singularities of negative codimension which are not function singularities.
	\item In all of our calculations of Thom series the corresponding polynomials $g_{\alpha,i}(r)$ (see Theorem \ref{thm:poly4p=1})  are either constant zero or polynomials in $r$ of degree exactly $i$ (for the vanishing of a coefficient see for example the Thom series of  $D_4$  in Theorem \ref{thm:thom_series}). We don't know if this holds in general.
	\item In all known cases of Legendre Thom polynomials $\tilde{K}_\eta$ we see the following pattern: For $\tilde{K}_\eta=\sum_{i,\lambda}v_{i,\lambda}Q_\lambda t^i$ if $v_{i,\lambda}\neq 0$ and $i>0$ then there is a partition $\mu\supset \lambda$ such that  $v_{i-1,\mu}\neq 0$. Loosely speaking the support is decreasing with $i$ increasing. In fact this conjecture would imply the previous one.
	\item What are the Legendre Thom polynomials of the binary singularities of Table \ref{tab:binary-crl-strata} after $X_9$, i.e.\ what are the coefficients of $t^iQ_\la$ for $\ell(\la)\geq 3$?
	\item As Section \ref{sec:small_r} demonstrates, even if the Legendre Thom polynomials $\tilde{K}_\eta$ are known, several coefficients of the stable Thom polynomial $\Tp(\eta(-r))$ are not determined for small values of $r$. What are the stable Thom polynomials $\Tp(\eta(-r))$ of the binary singularities of Table \ref{tab:binary-crl-strata} after $X_{10}$, e.g.\ for $r=1$?
\end{enumerate}

\appendix 
\addtocontents{toc}{\protect\setcounter{tocdepth}{1}}
\section{Symmetries of the quadratic stabilization}\label{sec:weights}
\renewcommand{\QQ}{\sigma_q}
\newcommand{\wt}{\operatorname{wt}}
\newcommand{\Tdiag}{T^{\mathrm{diag}}}
\newcommand{\rk}{\operatorname{rk}}

The key ingredient for applying restriction equations as described in Section \ref{sec:restriction} is a description of the symmetries of quasihomogeneous map germs in negative codimension. This involves understanding how the symmetries change under quadratic stabilization. Standard background on simple singularities, quasi-homogeneity, and versal unfolding may be found in \cite{AGZV}, \cite{Saito1971}, \cite{GreuelLossenShustin}, \cite{Looijenga1984}.
\subsection{Symmetries}

\begin{definition}
Let $f(x_1\stb x_n)$ be a polynomial and $T=(\C^\times)^r$. Let $T^\vee\iso \Z\bra t_1,\dots,t_r\ket$ denote the group of characters of $T$. We call 
\[ (\alpha_1,\dots,\alpha_n;d)\]
a \emph{weight vector of rank $r$} for $f$ if $\alpha_1,\dots,\alpha_n,d$ are in $T^\vee$ and
\[f(\tau^{\alpha_1}x_1,\stb ,\tau^{\alpha_n}x_n)=\tau^df(x_1\stb x_n)
\]
for all $\tau\in T$.
\end{definition}

Usually there are several weight vectors of $f$. Usually we choose $T$ to be of maximal rank. Composing the weight vector with an automorphism of $T$ gives another weight vector, we try to choose the \quot{simplest} one. Also, multiplying a weight vector with an integer we obtain another weight vector. To avoid divisibility discussions we allow multiplication by a rational number (usually 1/2, and then work in $T^\vee\otimes_\Z \Q$).

 By \cite{Rimanyi} it is possible to choose representatives and  weight vectors for our singularities such that they  determine a system of restriction equations with a unique solution if we choose the tori to be maximal. But we take an opportunistic approach: we give weight vectors which are easy to check, and provide a unique solution to the restriction equations, and don't worry about the maximality of the torus symmetry.

\begin{example}
	For $xy\in A_1(2,1)$ and  $T^\vee=\bra t_1,t_2\ket$ we see that $(t_1,t_2;t_1+t_2)$ is a weight vector.
\end{example}

\subsection{Quadratic stabilization}
This is the simplest case of the Thom-Sebastiani sum $(f,g)\mapsto f(x)+g(y)$ for $g(y)=y^2$, see \cite{SebastianiThom1971}. Iterating the quadratic stabilization twice gives
\[
\QQ^2(f)=f(x)+y^2+z^2\sim_{\mathcal{K}} f+uv,
\]
with $u=y+iz$, $v=y-iz$, i.e.\ the addition of a hyperbolic form.

\begin{remark}
	Two function germs $f\in J(n,1)$ and $g\in J(m,1)$ are stably right equivalent if
	\[
	f\oplus q_r \sim_{\mathcal R} g\oplus q_s
	\]
	for some non-degenerate quadratic forms $q_r,q_s$.  Over $\C$, the choice of the non-degenerate quadratic form is irrelevant. The splitting lemma (or Morse lemma) says that a germ with a non-degenerate quadratic part splits, up to right equivalence, as its residual non-Morse part plus a quadratic form.  Therefore if two stably right equivalent germs have the same number of variables and the same Morse rank, then their residual parts are right equivalent.  Quadratic stabilization changes only the quadratic summand.  For the general finite-determinacy and equivalence framework, see \cite{Mather1969}, \cite{Wall1981}, \cite{GreuelLossenShustin}.
\end{remark}

For the quadratic stabilization we immediately obtain that
\begin{lemma}\label{lemma:weight_of_Q1}
	Assume that the polynomial $f(x_1\stb x_n)$ has weight vector $(\al_1\stb \al_n;d)$ of rank $r$. Then
\begin{enumerate}[(i)]
\item $(\al_1\stb \al_n,d/2;d)$ is a  weight vector for $\QQ(f)=f(x)+y^2$,
\item $(\al_1\stb \al_n,t_{r+1},d-t_{r+1};d)$ is a  weight vector for $\QQ^2(f)=f(x)+uv$.
\end{enumerate}	
\end{lemma}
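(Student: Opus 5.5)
Both claims are verified directly from the defining identity of a weight vector, $f(\tau^{\alpha_1}x_1,\dots,\tau^{\alpha_n}x_n)=\tau^d f(x)$ for all $\tau\in T$, by checking each summand of the stabilized polynomial separately. Since quadratic stabilization is a Thom--Sebastiani sum of $f$ with a polynomial in fresh variables, the two summands are homogeneous independently. It is enough to check that the new summand scales by the same character $\tau^d$.

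For (i), keep the torus $T$ (tensored with $\Q$ when $d/2$ is not integral, as the paper's convention allows) and assign the new variable $y$ the weight $d/2$. Then $f(\tau^{\alpha}x)=\tau^d f(x)$ by hypothesis, and $(\tau^{d/2}y)^2=\tau^{d}y^2$. Adding the two gives $\QQ(f)(\tau\cdot(x,y))=\tau^d\QQ(f)(x,y)$, so $(\alpha_1,\dots,\alpha_n,d/2;d)$ is a weight vector. The only subtlety is the meaning of $\tau^{d/2}$. I would handle it either by passing to the double cover of $T$ or by working formally in $T^\vee\otimes_\Z\Q$, as stated in the paragraph preceding the lemma.

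For (ii), use the equivalence $\QQ^2(f)=f+y^2+z^2\sim_{\mathcal K} f+uv$ with $u=y+iz$ and $v=y-iz$, which is a linear coordinate change. Enlarge the torus to $T\times\C^\times$ of rank $r+1$, with new character $t_{r+1}$. The characters $\alpha_i$ and $d$ are pulled back along the projection to $T$, and the identity for $f$ still holds because the extra factor acts trivially on the $x$-variables. Assign $u$ the weight $t_{r+1}$ and $v$ the weight $d-t_{r+1}$. Then $(\tau^{t_{r+1}}u)(\tau^{d-t_{r+1}}v)=\tau^d uv$, so the sum is again homogeneous of weight $d$.

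The lemma is a one-line check, so there is no real obstacle. The point needing care is that in (ii) the weight vector belongs to the representative $f+uv$, not literally to $f+y^2+z^2$. I would state that the symmetry is transported through the linear contact equivalence, which is all the restriction equations of Section \ref{sec:restriction} require, since they depend only on the $\mathcal K$-orbit.
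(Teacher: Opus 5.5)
Your proof is correct, and it is the same direct verification the paper intends: the paper states the lemma as immediate, with no written proof. As you do, it treats the weight $d/2$ as an element of $T^\vee\otimes_\Z\Q$ and uses $f+uv$ as the representative of $\sigma_q^2(f)=f+y^2+z^2$, with the torus enlarged by one new character $t_{r+1}$.
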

\subsection{Miniversal deformations and stabilization}\label{sec:miniversal_stabilization}
\newcommand{\OO}{\mathcal{O}}
\newcommand{\Tjur}{\operatorname{Tjur}}
\newcommand{\Jac}{\operatorname{Jac}}
To compute the entire family of stable Thom polynomials in negative codimension via restriction equations, for the principal equation one needs to compute the miniversal deformation of quadratic stabilizations. However, as quadratic stabilization ``commutes" with miniversal deformation in the sense below, it is enough to first determine the weights of the miniversal deformation of the singularity, and then apply the results of the last section.

\begin{proposition}[Stabilization commutes with miniversal deformation]\label{prop:miniversal-stabilization}
	Let $f(x)\in J(n,1)$ be a finite  ICIS. If $F(x,s)$ is a miniversal unfolding  of $f(x)$, then
	\[
	\QQ(F)(x,y,s)=F(x,s)+y^2
	\]
	is a miniversal unfolding of $\QQ(f)=f+y^2$.  The analogous statement holds for $\QQ^2(f)=f+uv$.
\end{proposition}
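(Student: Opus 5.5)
The plan is to compare Tjurina algebras, which in the contact setting control miniversality. I will use the standard infinitesimal criterion. For a function germ $g\in\OO_m$, an unfolding $G(z,s)$ with $G(z,0)=g$ is $\mathcal K$-versal if and only if the initial velocities $\partial G/\partial s_j|_{s=0}$ span
\[
T^1_g=\OO_m/(g,\partial_{z_1}g,\dots,\partial_{z_m}g)
\]
over $\C$. It is miniversal when their number equals $\dim_\C T^1_g$, the Tjurina number. (This is the contact-unfolding version of Mather's criterion for ICIS; see \cite{GreuelLossenShustin}, \cite{Wall1981}.) So it suffices to show two things. First, the initial velocities of $F+y^2$ are the same functions $\partial F/\partial s_j|_{s=0}$, now regarded in $\OO_{n+1}$. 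Second, these still form a basis of $T^1_{f+y^2}$.

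The first point is immediate, because $y^2$ does not depend on $s$. For the second, I compute the Tjurina ideal of $g=f+y^2$ in $\OO_{n+1}=\C\{x,y\}$. The partial derivatives are $\partial_{x_i}f$ and $2y$, so the ideal contains $y$. Reducing modulo $y$ sends $g$ to $f$. Hence
\[
(g,\partial_x g,\partial_y g)=(f,\partial_{x_1}f,\dots,\partial_{x_n}f,y),
\]
and therefore
\[
T^1_{f+y^2}\cong \OO_n/(f,\partial_x f)=T^1_f,
\]
with the isomorphism induced by the inclusion $\OO_n\hookrightarrow\OO_{n+1}$. Under this isomorphism the classes of $\partial F/\partial s_j|_{s=0}$ correspond to themselves. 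They formed a basis of $T^1_f$ because $F$ is miniversal, so they form a basis of $T^1_{f+y^2}$, and the number of parameters is unchanged. This proves versality and minimality together. It also recovers the fact from Section~\ref{sec:quadratic-unfolding} that the Tjurina number is preserved, which is why $\codim$ grows by exactly one.

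The case $\QQ^2(f)=f+uv$ is handled the same way. The partials in $u,v$ are $v$ and $u$, so the Tjurina ideal becomes $(f,\partial_x f,u,v)$, and again the quotient is $T^1_f$. Alternatively, one can apply the first case twice and then make the linear coordinate change $u=y+iz$, $v=y-iz$. Since this change does not involve $s$, it carries a miniversal unfolding to a miniversal unfolding.

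The main obstacle is the first step: making sure we use the correct versality criterion for the contact group $\mathcal K$ acting on unfoldings, not the right-equivalence criterion, which involves the Milnor algebra $\OO/(\partial g)$. With the $\mathcal K$ version, the tangent space to the orbit includes the term $g\cdot\OO$. I would cite the criterion for isolated hypersurface singularities, which are finitely $\mathcal K$-determined, and then check that finite determinacy passes to $f+y^2$. This follows from the equality $\dim T^1_{f+y^2}=\dim T^1_f<\infty$ established above. The rest of the argument is the elementary ideal computation.
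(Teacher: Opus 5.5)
Your argument is correct and is essentially the paper's proof. Both rest on the isomorphism $T^1_{f+y^2}\cong T^1_f$ (and likewise for $f+uv$), under which the basis of initial velocities for $f$ remains a basis for the stabilized germ. The difference is in presentation: you apply the infinitesimal versality criterion to an arbitrary miniversal $F$ and check that the isomorphism is induced by $\mathcal{O}_n\hookrightarrow\mathcal{O}_{n+1}$, whereas the paper works directly with the linear representative $f+\sum_i s_i\phi_i$ from \cite[Corollary 1.17]{GreuelLossenShustin}.
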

\begin{proof}
Let $\Tjur(f)=J(n,1)/(f,\partial_1f\stb \partial_nf)$ be the Tjurina algebra of $f$. If $\phi_1\stb \phi_\mu$ is a basis for $\Tjur(f)$, then
\[
F(x,s)=f(x)+\sum_{i=1}^\mu s_i\phi_i(x)
\]
is a (contact) miniversal deformation of $f$ \cite[Corollary 1.17]{GreuelLossenShustin}. Since $\Tjur(f)\iso \Tjur(f+y^2)$, the same classes $\phi_i$ form a basis of $\Tjur(f+y^2)$.
\end{proof}

\subsection{Examples}
We illustrate on a few examples how the weight system changes under quadratic stabilization.

\begin{example}[$A_1$]\
	For $xy\in A_1(-1)$, $(t_1,t_2;t_1+t_2)$ is a weight vector. More generally, for
	\[
	f:=x_1x_{k+1}+\cdots+x_kx_{2k}\in A_1(-2k+1)
	\]
	with $d=\sum_{i=1}^{k} t_i$ we can see that
	\[ (t_1,\dots,t_k,d-t_1,\dots,d-t_k;d)\]
	is a  weight vector for $f$.
\end{example}

\begin{example}[$A_2$]\label{ex:a2_weights}
		$x^3+y^2\in A_2(-1)$ has weight vector
$(2t,3t;6t)$.
The next quadratic unfolding, $x^3+uv\in A_2(-2)$ has weight vector
	$(t_1+t_2,3t_1,3t_2;3(t_1+t_2))$.
	
	We can also use Lemma \ref{lemma:weight_of_Q1}: $(t_1;3t_1)$ is a weight vector for $x^3$, so $(t_1,t_2,3t_1-t_2;3t_1)$ is a  weight vector for  $x^3+uv$. Notice that the coordinate change $t_1\to t_1+t_2,\ t_2\to 3t_1$ moves the second choice to the first one.
\end{example}

\begin{example}[$D_4$]\label{ex:D4}
For $x^2y+y^3\in D_4(-1)$ a weight vector is $(t,t;3t)$, and using  Lemma \ref{lemma:weight_of_Q1} (i) we obtain that $(t,t,3t/2;3t)$---or $(2t,2t,3t;6t)$ if we multiply by 2---is a weight vector for the quadratic unfolding $ x^2y+y^3+z^2\in 	D_4(-2)$. For the double quadratic unfolding $x^2y+y^3+uv \in D_4(-3)$ using  Lemma \ref{lemma:weight_of_Q1} (ii) we obtain that 
\[ (t_1,t_1,t_2,3t_1-t_2;3t_1)\]
	is a weight vector.
\end{example}
Similarly for a given function singularity it is enough to record the source and target weights in their first appearance, and apply Lemma \ref{lemma:weight_of_Q1} for all the quadratic unfoldings.

\subsection{Weights in the $\ell=-1$ case}
By the above discussion, for a given function singularity it is enough to record the source and target weights in their first appearance. For example we have

\begin{proposition}[The $A_k(-1)$ stabilizer]\label{prop:ak-stabilizer}
	For
$
	x^{k+1}+y^2\in A_k(-1)
$
	we have that 
	\[\bigl(2t,(k+1)t;2(k+1)t\bigr)\]
	 is a weight vector (for $k$ odd we can divide by 2).
\end{proposition}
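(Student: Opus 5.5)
The plan is a direct verification against the definition of a weight vector. I then account for the remark about dividing by $2$ when $k$ is odd.

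Take the rank-one torus $T=\C^\times$ with character $t$, and act by $\tau\cdot(x,y)=(\tau^{2}x,\tau^{k+1}y)$. Then
\[
(\tau^2x)^{k+1}+(\tau^{k+1}y)^2=\tau^{2(k+1)}x^{k+1}+\tau^{2(k+1)}y^2=\tau^{2(k+1)}\bigl(x^{k+1}+y^2\bigr),
\]
so $\bigl(2t,(k+1)t;2(k+1)t\bigr)$ is a weight vector in the sense of the definition above.

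The same vector also follows from Lemma \ref{lemma:weight_of_Q1}(i). Start from $(t;(k+1)t)$ for $x^{k+1}\in A_k(1,1)$ and pass to the quadratic stabilization; this gives $(t,(k+1)t/2;(k+1)t)$. Multiplying by $2$ to clear the denominator, as the paper permits, gives exactly the stated vector. This route also explains why the $A_k(-1)$ representative is the natural starting point for the later quadratic unfoldings.

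For $k$ odd, $k+1$ is even, so all entries $2t$, $(k+1)t$ and $2(k+1)t$ are divisible by $2$. The vector $\bigl(t,\tfrac{k+1}{2}t;(k+1)t\bigr)$ is then an integral weight vector, as one checks by the same one-line computation.

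There is no real obstacle here. The only point needing care is the normalization convention: weight vectors may be rescaled by rationals. If maximality of the torus were required, I would note that the symmetry group of $x^{k+1}+y^2$ has rank one, since its Newton diagram has two vertices spanning a line. The paper explicitly does not require maximality, so I would not pursue this.
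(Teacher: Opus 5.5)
Your proposal is correct and matches the paper. The paper gives no separate proof beyond the one-line substitution check, or equivalently applying Lemma \ref{lemma:weight_of_Q1}(i) to the weight vector $(t;(k+1)t)$ of $x^{k+1}$ and rescaling by $2$, and your treatment of the odd-$k$ normalization is also right.
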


\begin{proposition}[The $D_k(-1)$ stabilizer]\label{prop:dk-stabilizer}
	For $x^2y+y^{k-1}\in 	D_k(-1), \qquad k\ge 4$ we can see that
\[\bigl((k-2)t,2t;2(k-1)t\bigr)\]
is a weight vector (for $k$ even we can divide by 2).	
\end{proposition}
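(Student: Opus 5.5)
The statement is Proposition \ref{prop:dk-stabilizer}: $\bigl((k-2)t,2t;2(k-1)t\bigr)$ is a weight vector for $f=x^2y+y^{k-1}$. I will check the defining identity of a weight vector directly on each monomial. Since the weight vector has rank one, it suffices to verify that both monomials of $f$ are homogeneous of the same degree $d=2(k-1)t$ when $x$ has weight $(k-2)t$ and $y$ has weight $2t$.

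For the monomial $x^2y$, the weight is $2(k-2)t+2t=(2k-2)t$. For $y^{k-1}$, the weight is $(k-1)\cdot 2t=(2k-2)t$. Both equal $d=2(k-1)t$. Hence
\[
f(\tau^{(k-2)}x,\tau^{2}y)=\tau^{2(k-1)}\bigl(x^2y+y^{k-1}\bigr)=\tau^{2(k-1)}f(x,y)
\]
for all $\tau\in\C^\times$, which is the defining identity.

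For the parenthetical claim, suppose $k$ is even. Then the entries $k-2$, $2$ and $2(k-1)$ are all even, so dividing by $2$ gives the integral weight vector $\bigl(\tfrac{k-2}{2}t,\,t;\,(k-1)t\bigr)$. This satisfies the same identity, since each monomial now has weight $(k-1)t$. For $k$ odd, $k-2$ is odd, so no such division stays integral. This is consistent with the paper's convention of allowing rational multiples.

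There is no real obstacle. The only thing to watch is that the representative is the one used for $D_k(-1)$ in Table \ref{fig:neg_singularities}, namely $x^2y+y^{k-1}$. Maximality of the torus is not claimed, in line with the opportunistic approach stated in the appendix, so nothing more needs to be shown. The same computation, with $x^{k+1}$ and $y^2$ given weights $2t$ and $(k+1)t$, proves the preceding Proposition \ref{prop:ak-stabilizer}.
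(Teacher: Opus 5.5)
Your verification is correct and matches the paper, which states the proposition without a written proof (``we can see that''), leaving exactly this direct check. Under $x\mapsto (k-2)t$, $y\mapsto 2t$, both monomials $x^2y$ and $y^{k-1}$ have weight $2(k-1)t$, and for $k$ even every entry is even, so the vector may be halved.
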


\begin{remark} In Table \ref*{tab:first-appearance} we collected all the data needed to calculate the stable Thom polynomials of Section \ref{sec:small_r}. In the source weights column we listed the weights of the source of the genotype before the vertical line $|$, and the extra weights of the prototype after. The singularities $S_5$ and $S_6$ are not function singularities, so they have 2 target weights.

\end{remark}

\begingroup
\scriptsize
\setlength{\tabcolsep}{2pt}
\setlength\LTleft{0pt}
\setlength\LTright{0pt}
\begin{longtable}{@{}>{\raggedright\arraybackslash}p{0.095\textwidth}>
		{\raggedright\arraybackslash}p{0.155\textwidth}>
		{\raggedright\arraybackslash}p{0.35\textwidth}>
		{\raggedright\arraybackslash}p{0.215\textwidth}>
		{\raggedright\arraybackslash}p{0.095\textwidth}@{}}
	\caption{Weights of singularities for $l=-1$ and codimension$\leq$7.}
\label{tab:first-appearance}\\	
	\toprule
	Singularity & Genotype & Prototype & Source weights & Target \\
	\midrule
	\endfirsthead
	\toprule
	Singularity & Genotype & Prototype & Source weights & Target \\
	\midrule
	\endhead
	\bottomrule
	\endfoot
	
	\(A_1\)
	& \(xy\)
	& \(xy\)
	& \((t_1,t_2)\)
	& \(t_1+t_2\) \\
	
	\(A_2\)
	& \(x^3+y^2\)
	& \(x^3+y^2+ux\)
	& \((2t,3t|4t)\)
	& \(6t\) \\
	
	\(A_3\)
	& \(x^4+y^2\)
	& \(x^4+y^2+ux^2+vx\)
	& \((t,2t|2t,3t)\)
	& \(4t\) \\
	
	\(A_4\)
	& \(x^5+y^2\)
	& \(x^5+y^2+ux^3+vx^2+wx\)
	& \((2t,5t|4t,6t,8t)\)
	& \(10t\) \\
	
	\(D_4\)
	& \(x^2y+y^3\)
	& \(x^2y+y^3+ux+vy+wx^2\)
	& \((t,t|2t,2t,t)\)
	& \(3t\) \\
	
	\(A_5\)
	& \(x^6+y^2\)
	& \(x^6+y^2+ux^4+vx^3+wx^2+zx\)
	& \((t,3t|2t,3t,4t,5t)\)
	& \(6t\) \\
	
	\(D_5\)
	& \(x^2y+y^4\)
	& \(x^2y+y^4+ux+vy+wy^2+zy^3\)
	& \((3t,2t|5t,6t,4t,2t)\)
	& \(8t\) \\
	
	\(S_5\)
	& \((x^2+y^2+z^2,yz)\)
	& \((x^2+y^2+z^2,\ yz+ux+vy+wz)\)
	& \((t,t,t|t,t,t)\)
	& \((2t,2t)\) \\
	
	\(A_6\)
	& \(x^7+y^2\)
	& \(\begin{aligned}[t]
		&x^7+y^2+ux^5+vx^4+wx^3+zx^2+ax
	\end{aligned}\)
	& \((2t,7t|4t,6t,8t,10t,12t)\)
	& \(14t\) \\
	
	\(D_6\)
	& \(x^2y+y^5\)
	& \(\begin{aligned}[t]
		&x^2y+y^5+ux+vy+wy^2+zy^3+ay^4
	\end{aligned}\)
	& \((2t,t|3t,4t,3t,2t,t)\)
	& \(5t\) \\
	
	\(E_6\)
	& \(x^3+y^4\)
	& \(\begin{aligned}[t]
		&x^3+y^4+uxy^2+vxy+wy^2+zx+ay
	\end{aligned}\)
	& \((4t,3t|2t,5t,6t,8t,9t)\)
	& \(12t\) \\
	
	\(S_6\)
	& \((x^2+y^2+z^3,yz)\)
	& \((x^2+y^2+z^3+uz,\ yz+vx+wy+az)\)
	& \((3t,3t,2t|4t,2t,2t,3t)\)
	& \((6t,5t)\) \\
  \end{longtable}
\normalsize
\section{Fibered resolution}
\label{app:fibered}

\begin{proposition} \label{prop:fibered-reso}
	Let $f:Z\to M$ be a smooth, proper $G$-map between smooth complex $G$-manifolds and $\pi_M:E_M\to M$ be a $G$-vector bundle, and $K\subset f^*E_M$ be a $G$-subbundle. Let $F:f^*E_M\to E_M$:
    
   \[
\xymatrix{
	K\subset f^*E_M\ar[r]^-{F}\ar[d]&E_M\ar[r]^{q_M} \ar[d]^{\pi_M}& E_M\\
	Z\ar[r]^{f}&M&
}
\]   
    
    Then
	\begin{equation}\label{eq:push}
		F_! [K\subset f^*E_M]_G=\pi_M^* f_!e_G(f^*E_M/K).
	\end{equation}
	where $K\subset E_M$ via the composition $K\to f^*E_M\to E_M$.
\end{proposition}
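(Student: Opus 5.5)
We factor $F$ through the total space of $f^*E_M$ and push forward in two stages. Write $\pi_Z:f^*E_M\to Z$ for the bundle projection. The map $F$ is the pullback of $f$ along $\pi_M$. Since $f^*E_M=Z\times_M E_M$, the square
\[
\begin{array}{ccc}
f^*E_M & \xrightarrow{F} & E_M\\
\downarrow{\scriptstyle \pi_Z} & & \downarrow{\scriptstyle \pi_M}\\
Z & \xrightarrow{f} & M
\end{array}
\]
is a Cartesian square of smooth $G$-manifolds. Here $f$ is proper and $\pi_M$ is a submersion, so $F$ is again proper. Hence $F_!$ is defined on equivariant cohomology, and we have base change:
\[
F_!\,\pi_Z^*=\pi_M^*\,f_!.
\]
The precise meaning of the statement is as follows. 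The left-hand side is the pushforward of the fundamental class of the subvariety $K\subset f^*E_M$. By the projection formula/proper pushforward, it equals the class of the image cycle $F(K)\subset E_M$, counted with degree when $F|_K$ is generically finite, and zero when $F|_K$ drops dimension.

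The first step is to compute $[K\subset f^*E_M]_G$ in $H^*_G(f^*E_M)\cong H^*_G(Z)$. This is the standard fact that a subbundle $K$ of a vector bundle $V$ is the zero locus of the composite section
\[
V\to \pi_Z^*V\to \pi_Z^*(V/K)
\]
given by the tautological section of $\pi_Z^*V$. This section is transverse to the zero section, so
\[
[K\subset V]_G=\pi_Z^*e_G(V/K).
\]
Applied with $V=f^*E_M$, this gives
\[
[K\subset f^*E_M]_G=\pi_Z^*e_G(f^*E_M/K).
\]
Equivariance holds because the tautological section is $G$-invariant and $K$ is a $G$-subbundle. Transversality can be checked fiberwise, where it is the linear surjection $V_z\to V_z/K_z$.

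The second step is to apply $F_!$ and use base change. This gives
\[
F_![K]_G=F_!\pi_Z^*e_G(f^*E_M/K)=\pi_M^*f_!e_G(f^*E_M/K),
\]
which is exactly \eqref{eq:push}.

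The main point needing care is the base change identity $F_!\pi_Z^*=\pi_M^*f_!$ in the equivariant setting. I would justify it by passing to finite-dimensional approximations $EG_N\times_G(-)$ of the Borel construction. For each $N$ the square remains Cartesian, with $f$ proper and $\pi_M$ a smooth submersion, and there the identity is the usual compatibility of Gysin maps with transverse pullback. Alternatively, since $\pi_M$ and $\pi_Z$ are vector bundle projections and hence homotopy equivalences, one can restrict to the zero sections. Under the identifications $H^*_G(E_M)=H^*_G(M)$ and $H^*_G(f^*E_M)=H^*_G(Z)$, the map $F_!$ becomes $f_!$ up to the Euler class twist. That twist cancels because $F$ has relative tangent bundle $\pi_Z^*T_f$, the same as $f$. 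Either route yields the formula.
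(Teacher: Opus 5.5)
Your proof is correct and is essentially the paper's argument. The paper notes $z_Z^*[K\subset f^*E_M]_G=e_G(f^*E_M/K)$, applies the isomorphism $z_M^*$ to both sides, and reduces the claim to the base change identity $z_M^*F_!=f_!z_Z^*$ for the Cartesian zero-section square. You instead use the inverse isomorphisms $\pi_Z^*,\pi_M^*$ and the base change $F_!\pi_Z^*=\pi_M^*f_!$ for the bundle-projection square, which is the same computation. Your second justification (restricting to zero sections) is exactly the paper's route, and no Euler class twist actually arises there, since the zero-section square is already transverse.
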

\begin{proof}
	Note that \[e_G(f^*E_M/K)=z_Z^*[K\subset f^*E_M]\]where $z_Z$ denotes the zero section of $f^*E_M$. Since $z_M^*$ is an isomorphism (where $z_M$ is the zero section of $E_M$), apply it to both sides of the statement \eqref{eq:push}. Then the claim reduces to $f_!z_Z^*=z_M^*F_!$ which follows from the maps being pullbacks of bundles and zero sections.
\end{proof}
\begin{proposition}
	Let $Y\subset A$ be a $G$-invariant subvariety, such that the image of $G$ contains the scalars. Take the ($G$-invariant) projectivization $\PP Y\subset \PP A$. Let $f:\tilde{Y}\to \PP A$ be a resolution of $\PP Y$. Then
	\[
	[Y\subset A]_G=\co_A^*\int_{\tilde{Y}}e_G(f^*Q_A)
	\]
	where $\co:A\to pt$ is the collapse map.
\end{proposition}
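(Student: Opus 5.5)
The plan is to reduce the affine class to a projective integral by a standard Gysin argument, applying Proposition \ref{prop:fibered-reso} to the tautological line bundle over $\PP A$.

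\textbf{Blow-up of $Y$.} Let $S=S_{\PP A}\subset \PP A\times A$ be the tautological line bundle, with projection $\pi:S\to\PP A$ and the natural map $F:S\to A$. Since $Y$ is $G$-invariant and the image of $G$ contains the scalars, $Y$ is a cone. Hence $\hat Y:=\pi^{-1}(\PP Y)\subset S$ maps onto $Y$ by $F$. This map is birational: over $Y\setminus 0$ it is an isomorphism, because $F$ is an isomorphism off the zero section. Consequently $[Y\subset A]_G=F_![\hat Y\subset S]$, provided $\dim Y>0$. The degenerate case $Y=\{0\}$ is handled separately and is trivial.

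\textbf{Pulling back along the resolution.} Pull back along the resolution $f:\tilde Y\to\PP A$. Put $K=f^*S\subset f^*(\PP A\times A)=\tilde Y\times A$. The composite $K\to\tilde Y\times A\to A$ is still proper and birational onto $Y$, since $f$ is birational onto $\PP Y$. Hence
\[
[Y\subset A]_G=(\text{push forward of }[K\subset \tilde Y\times A]).
\]

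\textbf{Applying the fibered resolution.} Now apply Proposition \ref{prop:fibered-reso} with $Z=\tilde Y$, $M=pt$ and $E_M=A$, so that $f^*E_M=\tilde Y\times A$ is the trivial bundle with fiber $A$. The subbundle is $K=f^*S$, and the map $F$ is the projection $\tilde Y\times A\to A$. The proposition gives
\[
F_![K\subset \tilde Y\times A]_G=\co_A^*\,(\tilde Y\to pt)_!\,e_G\big((\tilde Y\times A)/f^*S\big).
\]
The quotient $(\tilde Y\times A)/f^*S$ is $f^*Q_A$. The right-hand side is therefore $\co_A^*\int_{\tilde Y}e_G(f^*Q_A)$, which is the claim.

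\textbf{Degree of $F$ on $K$.} The main point needing care is that $F|_K$ has degree one onto $Y$, i.e.\ that pushforward of the fundamental class gives $[Y]$ and not a multiple of it. This follows because the composite $K\to \hat Y\to Y$ is an isomorphism over the open dense set where $f$ is an isomorphism and away from $0$.

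One further check is that Proposition \ref{prop:fibered-reso} applies even though $K$ is only a $G$-subbundle over the possibly non-proper source. Properness of $\tilde Y$ (it is a resolution of the projective variety $\PP Y$) ensures $f$ is proper, so the hypotheses hold.
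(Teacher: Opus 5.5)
Your proof is correct and is essentially the paper's argument. Both apply Proposition~\ref{prop:fibered-reso} to $K=f^*S_{\PP A}\subset \tilde Y\times A$, identify the quotient with $f^*Q_A$, and use that $K\to Y$ is proper and birational. The only difference is that you take $M=pt$ directly, whereas the paper takes $M=\PP A$ and then applies $(q_A)_!$ together with $(q_A)_!\pi_A^*=\co_A^*\int_{\PP A}$.

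One small correction: the case $Y=\{0\}$ is not ``handled separately and trivial'' but must be excluded. There the left side is $e_G(A)\neq 0$ (nonzero because $G$ acts by scalars), while $\tilde Y=\emptyset$ makes the right side vanish.
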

\begin{proof}
	We will apply the above situation to $M:=\PP A$, and the trivial vector bundle $\pi_A:A\to \PP(A)$, and the subbundle $K=f^*S_{\PP A}$. The conclusion above states that
	\[
	F_![f^*S_{\PP A}\subset \tilde{Y}\times A]_G=\pi_A^*f_!e_G(f^*Q_A).
	\]
	Apply $(q_A)_!$ for the other projection $q_A:\PP A\times A\to A$. Then \[f^*S_{\PP A}\xrightarrow{F} \PP A\times A\xrightarrow{q_A}A\] is proper and birational onto $Y$; indeed, over $\PP Y\subset \PP A$, $S_A|_{\PP Y}\to A$ is the blow-up of $Y$ along $0$: the preimage of $0$ is $\PP Y$. So applying $(q_A)_!$, we have
	\[
	[Y\subset A]_G=(q_A)_!\pi_A^* f_!e_G(f^*Q_A)=\co_A^*\int_{\PP A}f_!e_G(f^*Q_A)=\co_A^*\int_{\tilde{Y}}e_G(f^*Q_A)
	\]
\end{proof}
More generally:

Let $G$ be a linear algebraic group acting on the vector spaces $A,B$. Assume that $Y\subset V:=A\oplus B$ is a bihomogeneous subvariety (i.e.\ $\C^\times\times\C^\times$-invariant), which therefore has a bi-projectivization $\PP_{A,B}Y\subset \PP A\times \PP B$. Assume also that $Y$ is irreducible and not contained in $A\oplus 0$ or $0\oplus B$. 
\begin{proposition} \label{prop:biproj}
	Let $\Phi:\tilde{Y}\to \PP A\times \PP B$ be a $G$-equivariant resolution of $\PP_{A,B}Y$. Then
	\[
	[Y\subset A\oplus B]_G=\int_{\tilde{Y}}\Phi^*(e_G(Q_A)\cdot e_G(Q_B))
	\]
	where $Q_A$ and $Q_B$ are the tautological quotient bundles over $\PP A$ and $\PP B$.
\end{proposition}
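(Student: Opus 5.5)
We reduce to the single-projective case already proved (the proposition preceding it, for $Y\subset A$), using Proposition \ref{prop:fibered-reso} twice, once for each factor. Set $M=\PP A\times\PP B$ and $E_M$ the trivial bundle with fiber $A\oplus B$. On $\tilde Y$ take the $G$-subbundle $K=\Phi^*(S_{\PP A}\oplus S_{\PP B})\subset \Phi^*E_M$, whose quotient is $\Phi^*(Q_A\oplus Q_B)$. Proposition \ref{prop:fibered-reso} then gives
\[
F_![K\subset \tilde Y\times(A\oplus B)]_G=\pi_M^*\Phi_!\,e_G\big(\Phi^*Q_A\oplus\Phi^*Q_B\big)=\pi_M^*\Phi_!\Phi^*\big(e_G(Q_A)e_G(Q_B)\big).
\]

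Next we push forward along the projection $q:M\times(A\oplus B)\to A\oplus B$. The composite $K\to M\times(A\oplus B)\to A\oplus B$ is proper. Its image is $Y$: a point of $K$ over $y\in\tilde Y$ with $\Phi(y)=([a],[b])$ is a pair $(\lambda a,\mu b)$, and bihomogeneity of $Y$ puts every such pair in $\overline{Y}=Y$. The key claim is that $K\to Y$ is birational. We check this on the open set of $Y$ where both components are nonzero, which is dense because $Y$ is irreducible and not contained in $A\oplus0$ or $0\oplus B$. There a point $(a,b)$ determines $([a],[b])\in\PP_{A,B}Y$ uniquely. Since $\Phi$ is birational onto $\PP_{A,B}Y$, generically it has a unique preimage $y$, and the fiber point in $K_y$ is then unique. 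Hence $q_!F_![K]=[Y\subset A\oplus B]_G$, because pushforward of the fundamental class under a proper birational map onto its image gives the class of the image.

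On the other side, $q_!\pi_M^*=\co^*\int_M$, where $\co:A\oplus B\to pt$, by base change for the product projection. Combining with the projection formula $\int_M\Phi_!\Phi^*(\cdot)=\int_{\tilde Y}\Phi^*(\cdot)$ yields
\[
[Y\subset A\oplus B]_G=\co^*\int_{\tilde Y}\Phi^*\big(e_G(Q_A)\,e_G(Q_B)\big).
\]
Since $\co^*$ identifies $H_G^*(pt)$ with $H_G^*(A\oplus B)$, this is the stated formula.

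The main obstacle is the birationality and properness step. Here the total space of $S_{\PP A}\oplus S_{\PP B}$ over $M$ maps to $A\oplus B$ as a blow-up of the two coordinate subspaces $A\oplus0$ and $0\oplus B$ rather than of the origin. We must therefore check that the exceptional loci over $A\oplus 0$ and $0\oplus B$ have positive-dimensional fibers and so do not contribute a second component of the same dimension. This is exactly where the hypothesis that $Y$ lies in neither coordinate subspace is used. Degree bookkeeping gives a consistency check: $\dim\tilde Y+2=\dim Y$, and $\operatorname{rk}Q_A+\operatorname{rk}Q_B-\dim\tilde Y=\dim A+\dim B-2-\dim\tilde Y=\operatorname{codim}Y$.
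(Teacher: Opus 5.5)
Your argument is correct and is essentially the paper's proof: apply Proposition~\ref{prop:fibered-reso} once with $M=\PP A\times\PP B$, the trivial bundle $A\oplus B$ and $K=\Phi^*S_A\oplus\Phi^*S_B$, then push forward along the projection to $A\oplus B$ using that $K\to Y$ is proper and birational. The paper leaves that birationality check implicit, and you carry it out correctly on the dense open set of $Y$ where both components are nonzero.

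There are only two cosmetic slips. Your opening claim that you reduce to the single-projective case by using the proposition ``twice'' does not match what you actually do. Your closing worry that exceptional loci might form a second component is moot, since $K$ is a vector bundle over the irreducible $\tilde Y$ and is therefore irreducible.
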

For the proof, apply Proposition \ref{prop:fibered-reso} to $M:=\PP A\times \PP B$, the trivial bundle $A\oplus B$, the subbundle $K=\Phi^*S_A\oplus \Phi^*S_B$.
\endgroup
\bibliographystyle{alpha}
\bibliography{thom_negative_biblio}
\end{document}

%% file: abbrev.tex
\usepackage{wrapfig}
\usepackage{amsmath,amsxtra}
\usepackage{amssymb}
\usepackage{xy}
\xyoption{all}
\usepackage{epsfig} \usepackage{epstopdf} 
\usepackage{paralist}

\newcommand{\bra}{\langle}
\newcommand{\ket}{\rangle}
\newcommand{\quot}[1]{``#1''}

\newtheorem{fact}{Fact}[section]
\newtheorem{lemma}[fact]{Lemma}
\newtheorem{theorem}[fact]{Theorem}
\newtheorem{defi}[fact]{Definition}
\newtheorem{exer}[fact]{Exercise}
\newtheorem{exa}[fact]{Example}
\newtheorem{exas}[fact]{Examples}
\newtheorem{ob}[fact]{Observation}
\newtheorem{rremark}[fact]{Remark}
\newtheorem{proposition}[fact]{Proposition}
\newtheorem{corollary}[fact]{Corollary}
\newenvironment{remark}{\begin{rremark}\small \rm}{\end{rremark}}
\newenvironment{observation}{\begin{ob} \rm}{\end{ob}}
\newenvironment{definition}{\begin{defi} \rm}{\end{defi}}
\newenvironment{example}{\begin{exa} \rm}{\renewcommand\qedsymbol{$\diamond$}\hfill\qedsymbol\end{exa}}
\newenvironment{examples}{\begin{exas} \rm}{\end{exas}}

\newcommand{\C}{{\mathbb C}}
\newcommand{\Q}{{\mathbb Q}}
\newcommand{\Z}{{\mathbb Z}}
\newcommand{\N}{{\mathbb N}}
\newcommand{\F}{{\mathbb F}}

\renewcommand{\P}{{\mathbb P}}
\newcommand{\Top}{{\mathcal T\hspace{-3pt}op}}

\newcommand{\Ga}{\Gamma}
\newcommand{\al}{\alpha}

\newcommand{\si}{\sigma}

\DeclareMathOperator{\Hom}{Hom}

\DeclareMathOperator{\codim}{codim}

\DeclareMathOperator{\id}{Id}

\DeclareMathOperator{\GL}{GL}

\DeclareMathOperator{\co}{co}

\DeclareMathOperator{\Tp}{Tp}

\DeclareMathOperator{\Sym}{Sym}

\DeclareMathOperator\I{\mathcal I}

\DeclareMathOperator{\pol}{Pol}
\DeclareMathOperator{\Pol}{Pol}

\renewcommand{\epsilon}{\varepsilon}

	\newcommand*\clos[1]{\overline{#1}}
	\newcommand{\PP}{\mathbb{P}}
	
	\newcommand{\be}{\beta}
	\newcommand{\ga}{\gamma}

	\newcommand{\la}{\lambda}
	
	\newcommand{\Si}{\Sigma}

	\newcommand{\stb}{,\ldots,}

	\def\iso{\cong}

	\newcommand{\twocase}[6][1mm]{{#2}\begin{cases} {#3}& \hbox{\rm if}\ \ {#4} \\[{#1}] {#5}& \hbox{\rm if}\ \ {#6}\end{cases}}

	\usepackage{tikz}